\def\mhline{\noalign{\ifnum0=`}\fi\hrule height 4\arrayrulewidth \futurelet
   \@tempa\oxhline}
\def\oxhline{\ifx\@tempa\hline\vskip \doublerulesep\fi
      \ifnum0=`{\fi}}
\theoremstyle{plain}
\newtheorem{theorem}{Theorem}[section]
\newtheorem{thm}[theorem]{Theorem}
\newtheorem{lemma}[theorem]{Lemma}
\newtheorem{prop}[theorem]{Proposition}
\newtheorem{example}[theorem]{Example}
\newtheorem{defn}[theorem]{Definition}
\newtheorem{cor}[theorem]{Corollary}
\newtheorem{question}[theorem]{Question}
\newtheorem{problem}[theorem]{Problem}
\newtheorem{rmk}[theorem]{Remark}
\newcommand{\iso}{\cong}
\newcommand{\arr}{\rightarrow}
\newcommand{\R}{\mathbb{R}}
\newcommand{\C}{\mathbb{C}}
\newcommand{\Z}{\mathbb{Z}}
\newcommand{\mcA}{\mathcal{A}}
\newcommand{\mcI}{\mathcal{I}}
\newcommand{\mcM}{\mathcal{M}}
\DeclareMathOperator{\vspan}{span}
\DeclareMathOperator{\Der}{Der}
\DeclareMathOperator{\nbc}{nbc}
\DeclareMathOperator{\codim}{codim}
\DeclareMathOperator{\al}{al}
\DeclareMathOperator{\fl}{fl}
\begin{document}

\title[Rationally smooth Schubert varieties and inversion arrangements]{Rationally smooth Schubert varieties and inversion hyperplane arrangements}

\author{William Slofstra}
\email{weslofst@uwaterloo.ca}

\begin{abstract}
    We show that an element $w$ of a finite Weyl group $W$ is rationally smooth
    if and only if the hyperplane arrangement $\mcI$ associated to the
    inversion set of $w$ is inductively free, and the product $(d_1+1) \cdots
    (d_l+1)$ of the coexponents $d_1,\ldots,d_l$ is equal to the size of the
    Bruhat interval $[e,w]$, where $e$ is the identity in $W$. As part of the
    proof, we describe exactly when a rationally smooth element in a finite
    Weyl group has a chain Billey-Postnikov decomposition. For finite Coxeter
    groups, we show that chain Billey-Postnikov decompositions are connected
    with certain modular coatoms of $\mcI$. 
\end{abstract}
\maketitle

\section{Introduction}
A central hyperplane arrangement in a Euclidean space is free if its module of
derivations is free, in which case the the degrees of a homogeneous basis are
called the coexponents of the arrangement. Let $Q_{\mcA}(t)$ be the Poincare
polynomial of an arrangement $\mcA$. If $\mcA$ is free with coexponents
$d_1,\ldots,d_l$, then $Q_{\mcA}(t) = \prod_i (1 + d_i t)$ by a result of Terao
\cite{Te81}. The evaluation of $Q_{\mcA}(t)$ at $t=1$ is the number of chambers
of $\mcA$. 

Let $W$ be a finite Coxeter group with length function $\ell$. It is well-known
that the Poincare polynomial $P(q) = \sum_{x \in W} q^{\ell(x)}$ factors as
$P(q) = \prod_i [m_i + 1]_q$, where $[m]_q$ is the $q$-integer $1 + q + q^2 +
\ldots + q^{m-1}$ and $m_1,\ldots,m_{l}$ are the exponents of $W$. Let $R$ be a
root system associated to $W$, and let $\mcA$ be the Coxeter hyperplane
arrangement cut out by the elements of $R$. Then $\mcA$ is free, and
$Q_{\mcA}(t) = \prod_i (1 + m_i t)$, where the coexponents $m_1, \ldots, m_l$
are equal to the exponents of $W$ \cite{Ar79, Sa75, OT92}.  More recently,
Barakat and Cuntz have shown that $\mcA$ is inductively free \cite{BC12}.  The
Poincare polynomial $Q_{\mcA}(t)$ can be written strictly in terms of the Weyl
group as $\sum_{w \in W} t^{\ell'(w)}$, where $\ell'(w)$ is the absolute length
of $w$ \cite{Ca72} \cite{ST54}. The identity $Q_{\mcA}(1) = \prod_i (1+m_i) =
P(1)$ recovers the fact that the number of chambers of $\mcA$ is equal to the
size of $W$. 

In this paper we show that if $W$ is a Weyl group, then this classical picture
of the relationship between $W$ and $\mcA$ extends to---and in fact
characterizes---the rationally smooth elements of $W$. The Poincare polynomial
of an element $w \in W$ is $P_w(q) = \sum_{x \in [e,w]} q^{\ell(x)}$, where
$[e,w]$ is the interval between the identity $e$ and $w$ in Bruhat order. If
$X(w)$ is a Schubert variety indexed by $w$, then $P_w(q^2) = \sum_i q^i \dim
H^i(X(w))$. A theorem of Carrell-Peterson states that $X(w)$ is rationally
smooth if and only if $P_w(q)$ is palindromic, meaning that the coefficients of
$P_w(q)$ are the same whether read from top-degree to bottom-degree, or
vice-versa. We say that $w \in W$ is rationally smooth if this condition is
satisfied. In finite type, it follows from results primarily of Gasharov
\cite{Ga98}, Billey \cite{Bi98}, Billey-Postnikov \cite{BP05}, and
Akyildiz-Carrell \cite{AC12} that $P_w(q)$ factors as a product $\prod_i
[m_i+1]_q$ of $q$-integers for every rationally smooth element $w$. The
integers $m_1,\ldots,m_l$ arising in this factorization can be uniquely
determined from $P_w(q)$. We call these integers the exponents of the
rationally smooth element $w$ (see Theorem \ref{T:exponents} and Definition
\ref{D:exponents}).

The inversion hyperplane arrangement $\mcI(w)$ of an element $w \in W$ is the
arrangement cut out by the inversion set $I(w)$ of $w$. If $w_0$ is the maximal
element of $W$, then $\mcI(w)$ is the Coxeter arrangement cut out by $R$, and
$P_{w_0}(q)$ is the Poincare polynomial $P(q)$. For a general element $w$,
Hultman, Linusson, Shareshian, and Sj\"ostrand (type A, \cite{HLSS09}) and
Hultman (all Coxeter groups, \cite{Hu11}) have shown that if $Q_{\mcI(w)}(1)$
is equal to the size of the Bruhat interval $[e,w]$, then
\begin{equation}\label{E:HLSS}
    Q_{\mcI(w)}(t) = \sum_{u \in [e,w]} t^{\al(u,w)} = \sum_{u \in [e,w]}
        t^{\ell'(u w^{-1})},
\end{equation}
where $\al(u,w)$ is the distance from $u$ to $w$ in the Bruhat graph of
$[e,w]$. In addition, \cite{HLSS09} and \cite{Hu11} show that Equation
(\ref{E:HLSS}) holds if $w$ is rationally smooth. We say that $w$ satisfies the
HLSS condition if $Q_{\mcI(w)}(1)$ is equal to the size of $[e,w]$. In type
$A$, the HLSS condition holds for $w$ if and only if the Schubert variety
$X(w)$ is defined by inclusions \cite{HLSS09}, a condition introduced by
Gasharov and Reiner which now has a number of equivalent formulations
\cite{GR02} \cite{Sj07} \cite{LM14}. 

In type $A$, Oh, Postnikov, and Yoo show that if $w$ is rationally smooth, then
$Q_{\mcI(w)}(t) = \prod_i (1+m_i t)$, where $m_1,\ldots,m_l$ are the exponents
of $w$ \cite{OPY08}. Their proof implicitly implies that $\mcI(w)$ is free. Yoo
has conjectured that $Q_{\mcI(w)}(t)$ factors in this way for rationally smooth
elements in all finite Weyl groups \cite[Conjecture 1.7.3]{Yoo11}. 
Oh, Postnikov, and Yoo also show that, in type $A$, if $w$ is rationally smooth
then the Poincare polynomial $P_w(q)$ is equal to the rank-generating polynomial
for the poset of regions of $\mcI(w)$.\footnote{The rank-generating polynomial is
always palindromic, so in fact $w$ is rationally smooth if and only if $P_w(q)$
is equal to the rank-generating polynomial of $\mcI(w)$.} This result has been
extended to all finite-type Weyl groups by Oh and Yoo \cite{OY10}, using what
we will call chain Billey-Postnikov (BP) decompositions (this is a modest
variation on the terminology in \cite{OY10}).

Inspired by \cite{OY10}, we list the rationally smooth elements in finite type
which do not have a chain BP decomposition. We then show that an element $w$ of
an arbitrary finite Coxeter group has a chain BP decomposition if and only if
$\mcI(w)$ has a modular coatom of a certain form. Using these two results, we
prove Yoo's conjecture by showing that $\mcI(w)$ is inductively free when $w$
is rationally smooth, with coexponents equal to the exponents of $w$. Using the
root system pattern avoidance criterion for rational smoothness due to Billey
and Postnikov \cite{BP05}, we prove a converse: $w$ is rationally smooth if and
only if $\mcI(w)$ is free and $w$ satisfies the HLSS condition. Note that
when $\mcI(w)$ is free, $w$ satisfies the HLSS condition if and only if
$\prod_i (1+d_i) = |[e,w]|$. Finally, we show that $w$ has a complete chain BP
decomposition if and only if $w$ is rationally smooth and $\mcI(w)$ is
supersolvable. 

\subsection{Organization}
We start in Section \ref{S:background} by giving some additional background
and terminology. The main results are stated in Section \ref{S:main}. 
Section \ref{S:bp} contains results on chain BP decompositions. In Section
\ref{S:hultman} we give some more background on the HLSS condition. Section
\ref{S:modular} develops the connection between chain BP decompositions and
modular elements, and proves one direction of the main theorem. Section
\ref{S:flattening} discusses root-system pattern avoidance and the HLSS
condition. The proofs of the main theorems are finished in Subsection
\ref{SS:proof}. Examples and further directions are discussed in Section
\ref{S:further}.

\subsection{Acknowledgements}
I thank Jim Carrell for explaining his results, which inspired this project. I
thank Ed Richmond for conjecturing Proposition \ref{P:typeAfree}, as well as
many helpful discussions. I thank Suho Oh and Hwanchul Yoo for helpful
conversations. I thank the anonymous referee and Monica Vazirani for helpful
suggestions on the manuscript.

\section{Background and notation}\label{S:background}
In this section we go over some background and notation that will be used
throughout the paper.

\subsection{Root systems and Weyl groups}\label{SS:rsbackground} 

$W$ will be a finite Coxeter group with generating set $S$ and root system $R$.
We use $l = |S|$ to refer to the rank of $W$.

We let $\leq$ denote Bruhat order on $W$. The \emph{Bruhat graph} of $W$ is the
directed graph with vertex set $W$, and an edge from $w$ to $tw$ whenever $t$
is a reflection with $\ell(tw) > \ell(w)$. By definition, the distance
$\al(u,w)$ from $u$ to $w$ in the Bruhat graph is infinite unless $u \leq w$ in
Bruhat order. The \emph{absolute (or reflection) length} $\ell'(w)$ of $w \in
W$ is the smallest number $k$ such that $w$ is a product of $k$ reflections in
$W$.

Given a root $\alpha \in R$, we let $t_{\alpha}$ denote the corresponding
reflection. If $s \in S$ is a simple reflection, we let $\alpha_s$ denote the
corresponding simple positive root. We let $D_L(w)$ and $D_R(w)$ denote the left and
right descent sets of $w \in W$, and $S(w)$ denote the support of $w$ (i.e. the
set of all simple reflections which appear in some reduced expression for $w$).

Given a subset $J \subset S$, we let $W_J$ denote the parabolic subgroup
generated by $J$, $W^J$ denote the set of minimal length left coset
representatives, and ${}^J W$ denote the set of minimal length right coset
representatives. Every element $w \in W$ can be written uniquely as $w = uv$
where $v \in {}^J W$ and $u \in W_J$. This factorization is called the \emph{left
parabolic decomposition} of $w$. \emph{Right parabolic decompositions} are defined
similarly.

The \emph{Poincare polynomial} of $w$ relative to $J$ is ${}^J P_w(q) =
\sum_{x \in [e,w] \cap {}^J W} q^{\ell(x)}$. The Poincare polynomial $P^J_w(q)$
is defined similarly with ${}^J W$ replaced by $W^J$. As in the introduction,
we write $P_w(q)$ for $P^{\emptyset}_w(q)$. We say that a polynomial $P(q)$ of
degree $l$ is \emph{palindromic} if $q^l P(q^{-1}) = P(q)$, meaning again that
the coefficients of $P(q)$ are the same whether read from top-degree to
bottom-degree or vice-versa. If $P_w(q)$ is palindromic, we say that $w$ is
\emph{rationally smooth}.

We refer to \cite{BL00} for more background on the combinatorics of
rationally smooth Schubert varieties.

\subsection{Hyperplane arrangements}\label{SS:habackground}
A central hyperplane arrangement $\mcA$ is a union of linear hyperplanes in
some vector space $V$. For convenience, we will also use $\mcA$ to refer to the
set of hyperplanes which it contains.  Given a central hyperplane arrangement
$\mcA$ in $V$, we can choose a defining linear form $\alpha_H$ for each $H \in
\mcA$.  Then $\mcA$ is cut out by the polynomial $Q = \prod \alpha_H$, where
the product is over all $H \in \mcA$. The complexification $\mcA_{\C}$ is the
arrangement cut out by $Q$ in $V_{\C} = V \otimes \C$, and $\Der(\mcA)$ is the
$\C[V_{\C}]$-module of derivations of $\C[V_{\C}]$ which preserve the ideal
$\C[V_{\C}] Q$. The module $\Der(\mcA)$ is graded by polynomial degree. As in
the introduction, $\mcA$ is said to be \emph{free} if $\Der(\mcA)$ is a free
$\C[V_{\C}]$-module, in which case $\Der(\mcA)$ has a homogeneous basis. The
degrees of this basis are called the \emph{coexponents} of $\mcA$.

Let $L(\mcA)$ denote the intersection lattice of $\mcA$. By convention, the
maximal element of $L(\mcA)$ is the center $\bigcap_{H \in \mcA} H$ of $\mcA$.
An element of $L(\mcA)$ is called a \emph{flat}.  If $X \in L(\mcA)$, we let
$\mcA^X$ denote the restriction of $\mcA$ to $X$, and $\mcA_X$ the localization
of $\mcA$ at $X$ (the localization $\mcA_X$ consists of the hyperplanes $H \in
\mcA$ such that $X \subseteq H$). Given $H \in \mcA$, we let $\mcA \setminus H$
denote the deletion by $H$, which is the arrangement containing all hyperplanes
of $\mcA$ except $H$. Finally, if $Y \subset V$ is a subspace of the center of
$\mcA$, we let $\mcA / Y$ be the quotient of $\mcA$ by $Y$, an arrangement in
$V / Y$. The arrangement $\mcA$ is free if and only if $\mcA / Y$ is free. If
$\mcA$ is free, then $0$ is a coexponent of $\mcA$ of multiplicity at least
$\dim Y$, and the remaining coexponents of $\mcA$ are equal to the coexponents
of $\mcA / Y$. 

The addition theorem states that if $\mcA \setminus H$ is free with coexponents
$d_1,\ldots,d_{l-1}, d_{l}-1$, and $\mcA^H$ is free with coexponents
$d_1,\ldots,d_{l-1}$, then $\mcA$ is free with coexponents $d_1,\ldots,d_l$
\cite{OT92}. An arrangement is said to be \emph{inductively free} if either (a)
$\mcA$ contains no hyperplanes (in which case all exponents are zero), or (b)
there is some hyperplane $H \subset \mcA$ such that $\mcA \setminus H$ is
inductively free with coexponents $d_1,\ldots,d_l-1$, and $\mcA^{H}$ is
inductively free with coexponents $d_1,\ldots,d_{l-1}$. The addition theorem
implies that inductively free arrangements are free. 

Given an order on the set $\{\alpha_H : H \in \mcA \}$, a \emph{broken circuit} is
defined to be an ordered subset $\{\alpha_{H_1} < \ldots < \alpha_{H_k}\}$ such
that there is $\alpha_{H_{k+1}} > \alpha_{H_k}$ for which
$\{\alpha_{H_1},\ldots,\alpha_{H_{k+1}}\}$ is a minimal linearly dependent set
in $\{\alpha_H\}$.\footnote{The order is often reversed in this definition. We
are using the order convention from \cite{Hu11}.} An \emph{$\nbc$-set} is an
ordered subset $\{\alpha_{H_1} < \ldots < \alpha_{H_k}\}$ which does not
contain a broken circuit. The Orlik-Solomon algebra is the free exterior
algebra generated by the forms $\alpha_H$, modulo the relations
\begin{equation*}
    \partial \left(\alpha_{H_1} \wedge \ldots \wedge \alpha_{H_k}\right) = 0 \text{ if }
        \codim H_1 \cap \ldots \cap H_k < k.
\end{equation*}
Here $\partial$ is the unique derivation which sends $\alpha_{H} \mapsto 1$
for all $H \in \mcA$.  It is well-known that the elements
\begin{equation*}
    \alpha_{H_1} \wedge \ldots \wedge \alpha_{H_k} \text{ : } 
        \{\alpha_{H_1},\ldots,\alpha_{H_k}\} \text{ is an $\nbc$-set}
\end{equation*}
form a basis of this algebra, and that the Orlik-Solomon algebra is isomorphic
to the cohomology ring $H^*(\mcM(\mcA))$ of the complement $\mcM(\mcA) := V_{\C}
\setminus \mcA_{\C}$. In particular, the number of $\nbc$-sets does not depend
on the order on $\{\alpha_H\}$. 

The Poincare polynomial $Q_{\mcA}(t)$ of $\mcA$ can be defined as the Poincare
polynomial $\sum_i t^i H^i(\mcM(\mcA))$ of $\mcM(\mcA)$. The coefficient of
$t^i$ in $Q_{\mcA}(t)$ is then the number of $\nbc$-sets of size $i$. The
Poincare polynomial is related to the characteristic polynomial
$\chi_{\mcA}(t)$ by the identity $t^l Q_{\mcA}(-t^{-1}) = \chi_{\mcA}(t)$. 

\subsection{Inversion arrangements}
We now fix $V$ to be the ambient Euclidean space containing $R$. For
convenience we identify $V$ and $V^*$ using the Euclidean form. Let $R^+$ and
$R^-$ denote the positive and negative root sets of $R$, respectively. The
\emph{inversion set} of $w \in W$ is defined to be 
\begin{equation*}
    I(w) = \{\alpha \in R^+ : w^{-1} \alpha \in R^-\}.
\end{equation*}
The \emph{inversion hyperplane arrangement} of $w$ is 
\begin{equation*}
    \mcI(w) = \bigcup_{\alpha \in I(w)} \ker \alpha
\end{equation*}
in $V$. Note that since $V$ and $V^*$ are identified, $\ker \alpha = \{\beta
\in V : (\beta, \alpha) = 0\}$. 

\section{Main results}\label{S:main}

The starting point for this paper is the following theorem:
\begin{thm}[\cite{AC12}, \cite{Ga98}, \cite{Bi98}, \cite{BP05}]\label{T:exponents}
    Let $W$ be a finite Weyl group.  An element $w \in W$ is rationally smooth
    if and only if
    \begin{equation}\label{E:exponents}
        P_w(q) = \prod_{i=1}^{l} \left[ m_i+1\right]_q
    \end{equation}
    for some collection of non-negative integers $m_1,\ldots,m_l$.
\end{thm}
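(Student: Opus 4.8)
The plan is to treat the two implications separately; the forward implication is elementary, and essentially all the work lies in the converse.

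For the direction assuming the factorization, note that each factor $[m_i+1]_q = 1 + q + \cdots + q^{m_i}$ is palindromic of degree $m_i$, and a product of palindromic polynomials is palindromic: if $q^{a}A(q^{-1}) = A(q)$ and $q^{b}B(q^{-1}) = B(q)$, then $q^{a+b}(AB)(q^{-1}) = (AB)(q)$. Thus $P_w(q) = \prod_{i=1}^{l}[m_i+1]_q$ is palindromic, and by the Carrell--Peterson criterion $w$ is rationally smooth. No Weyl-group input is needed here.

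For the converse I would induct on the rank $l$. After discarding generators outside $S(w)$, which only contribute trivial factors $[1]_q = 1$, I may assume $S(w) = S$. The mechanism is the parabolic (Billey--Postnikov) decomposition: I would choose a simple reflection $s$, set $J = S \setminus \{s\}$, and write the left parabolic decomposition $w = uv$ with $u \in W_J$ and $v \in {}^J W$. Suppose this is a BP decomposition, so that $P_w(q) = {}^J P_w(q)\, P_u(q)$. For a rationally smooth $w$ the quotient factor ${}^J P_w(q)$ is palindromic (a basic feature of BP decompositions), so $P_u = P_w / {}^J P_w$ is a ratio of palindromic polynomials and is therefore itself palindromic; hence $u$ is rationally smooth in $W_J$. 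Since $W_J$ is again a finite Weyl group, of rank $l-1$, the inductive hypothesis gives $P_u(q) = \prod_{i=1}^{l-1}[m_i+1]_q$. If moreover the quotient factor is a single $q$-integer, ${}^J P_w(q) = [m_l+1]_q$, then multiplying the two factorizations yields $P_w(q) = \prod_{i=1}^{l}[m_i+1]_q$ and the induction closes.

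The heart of the matter, and the main obstacle, is to produce a maximal parabolic $J = S \setminus \{s\}$ for which the decomposition is a BP decomposition \emph{and} the quotient polynomial ${}^J P_w(q) = \sum_{x \in [e,w]\cap {}^J W} q^{\ell(x)}$ is a single $q$-integer, i.e. for which the relevant interval in the quotient ${}^J W$ is a chain; this is exactly the existence of a \emph{chain} BP decomposition. I would try to locate a suitable $s \in D_R(w)$ using the root-system pattern-avoidance characterization of rational smoothness of Billey--Postnikov, which constrains the inversion set of $w$ tightly enough to force the quotient interval to be totally ordered. The genuine difficulty is that a rationally smooth element need not admit any chain BP decomposition; such elements form a finite, explicitly enumerable list in each Cartan--Killing type, and for them the naive induction breaks down. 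For those exceptions I would invoke the geometric result of Akyildiz--Carrell, which deduces the factorization of $P_w(q)$ into $q$-integers directly from the rational smoothness of the Schubert variety $X(w)$, and then check the finite list by hand. Finally, the integers $m_1,\ldots,m_l$ are recovered from $P_w(q)$ by comparing the multiplicities of the cyclotomic factors $\Phi_d$ on the two sides, which gives the well-definedness recorded in Definition \ref{D:exponents}.
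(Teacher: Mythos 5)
Your proposal follows essentially the same route as the paper: the forward direction is just the palindromicity of a product of $q$-integers, and the converse is an induction via chain Billey--Postnikov decompositions, with the rationally smooth elements admitting no such decomposition forming the finite list of Theorem \ref{T:chainbpexist} and being handled separately (the paper checks these via the classical factorization for maximal parabolic elements and an explicit identity for the elements $w_{kl}$ in type $E$, while also noting the Akyildiz--Carrell route you propose as an alternative). The only small adjustment needed is to allow right as well as left chain BP decompositions, or equivalently to pass to $w^{-1}$ using $P_w(q) = P_{w^{-1}}(q)$, since some rationally smooth elements admit only one of the two.
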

As in the introduction, here $[m]_q = (1+q+q^2+\ldots+q^{m-1})$.  The proof of
Theorem \ref{T:exponents} splits into cases, and is due to a variety of
authors. As far as the author is aware, the complete theorem has not been
previously stated before, so we give a full account of the proof in Section
\ref{S:bp}.  The integers $m_1,\ldots,m_l$ appearing in Theorem
\ref{T:exponents} are uniquely determined by Equation (\ref{E:exponents}),
leading to the following definition.
\begin{defn}\label{D:exponents}
    Let $W$ be a finite Weyl group.  If $w \in W$ is rationally smooth, then
    the \emph{exponents} of $w$ are the integers $m_1,\ldots,m_l$ appearing in
    Theorem \ref{T:exponents}.
\end{defn}
The main result of this paper is then:
\begin{thm}\label{T:main}
    Let $W$ be a finite Weyl group.  An element $w \in W$ is rationally smooth
    if and only if:
    \begin{enumerate}[(a)]
        \item the inversion hyperplane arrangement $\mcI(w)$ is free with
            coexponents $d_1,\ldots,d_l$, and
        \item the product $\prod_i (1+d_i)$ is equal to the size of the Bruhat
            interval $[e,w]$.  
    \end{enumerate}
    Furthermore, if $w$ is rationally smooth then the coexponents
    $d_1,\ldots,d_l$ are equal to the exponents of $w$. 
\end{thm}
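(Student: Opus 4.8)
The plan is to prove this via two separate directions, using the tools the introduction tells me are available: the Billey–Postnikov root-system pattern avoidance criterion for rational smoothness, the HLSS formula (Equation (\ref{E:HLSS})), Terao's factorization, the addition theorem, and—crucially—a theory of chain BP decompositions. The statement is an ``if and only if'' plus an identification of coexponents, so I expect the architecture to mirror the roadmap sketched in the introduction: one direction builds freeness and identifies coexponents for rationally smooth $w$, the other uses freeness plus the HLSS condition to force rational smoothness.

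\emph{Forward direction (rationally smooth $\Rightarrow$ (a) and (b)).} First I would reduce to an inductive setup by means of chain BP decompositions. The idea is that a rationally smooth $w$ admits (with finitely many classified exceptions) a chain BP decomposition $w = v \cdot s$ relative to some parabolic, where the quotient Poincaré polynomial is a single $q$-integer $[m_l+1]_q$. The plan is to translate such a decomposition of $w$ into a decomposition of the arrangement $\mcI(w)$: the BP decomposition should correspond to a modular coatom $X$ of $\mcI(w)$, so that $\mcI(w)$ splits as a localization $\mcI(w)_X$ (inductively free by induction) together with a single distinguished hyperplane, letting me apply the addition theorem. By Theorem~\ref{T:exponents}, $P_w(q) = \prod_i [m_i+1]_q$, and the induction would produce coexponents matching the exponents $m_i$ exactly, giving (a) with the correct coexponents. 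Then Terao's result gives $Q_{\mcI(w)}(t) = \prod_i(1 + m_i t)$, so $\prod_i (1+m_i) = Q_{\mcI(w)}(1)$; since rationally smooth $w$ satisfies the HLSS condition (cited from \cite{HLSS09}, \cite{Hu11}), $Q_{\mcI(w)}(1) = |[e,w]|$, which is exactly (b). The exceptional rationally smooth elements without chain BP decompositions must be handled separately, presumably by a finite check that $\mcI(w)$ is nonetheless (inductively) free with the predicted coexponents.

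\emph{Converse direction ((a) and (b) $\Rightarrow$ rationally smooth).} Here I would argue contrapositively using root-system pattern avoidance. If $w$ is \emph{not} rationally smooth, Billey--Postnikov guarantees a ``bad'' root-subsystem pattern. The plan is to show that the presence of such a pattern obstructs the simultaneous validity of (a) and (b). Assuming $\mcI(w)$ is free with coexponents $d_1,\dots,d_l$, Terao gives $Q_{\mcI(w)}(t) = \prod_i(1+d_i t)$, so condition (b), namely $\prod_i(1+d_i) = |[e,w]|$, is precisely the statement $Q_{\mcI(w)}(1) = |[e,w]|$, i.e. the HLSS condition. Thus (a) and (b) together assert that $\mcI(w)$ is free \emph{and} $w$ satisfies HLSS; I must show this pair cannot hold when $w$ is not rationally smooth. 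The strategy is to examine the minimal non-rationally-smooth patterns, show the inversion arrangement either fails to be free or violates HLSS, and propagate this obstruction back to $w$ via the compatibility of inversion arrangements with the pattern-restriction (flattening) operation discussed in Section~\ref{S:flattening}.

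\emph{The main obstacle} I anticipate is the interplay in the forward direction between the combinatorial chain BP decomposition and the geometric modular-coatom structure of $\mcI(w)$: establishing that a chain BP decomposition yields precisely a modular coatom $X$ with $\mcI(w)_X$ of the correct smaller type, and that the addition theorem applies with the single remaining hyperplane contributing the coexponent $m_l$, is the technical heart. The bookkeeping ensuring the coexponents produced by induction match the exponents $m_i$ rather than merely having the right product is delicate, and the classification and case-by-case verification of the exceptional rationally smooth elements lacking a chain BP decomposition is where most of the real labor will lie.
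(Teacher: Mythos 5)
Your proposal follows essentially the same route as the paper: the forward direction is proved by induction via chain BP decompositions, translated into modular coatoms of $\mcI(w)$ so the addition theorem applies (with the finitely many exceptional elements lacking chain BP decompositions checked separately, by computer for the $w_{kl}$ and by Barakat--Cuntz for the maximal elements), and the converse is proved by observing that conditions (a) and (b) amount to freeness plus the HLSS condition, both of which are preserved under flattening and both of which fail for every Billey--Postnikov bad pattern. The only cosmetic difference is that the paper gets condition (b) directly from $Q_{\mcI(w)}(1)=\prod_i(1+m_i)=P_w(1)$ rather than by citing Hultman's theorem, but this changes nothing of substance.
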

As mentioned in the introduction, one direction of Theorem \ref{T:main} is
implicitly proved for type $A$ in \cite{OPY08}. In the proof of Theorem
\ref{T:main} we will show that if $w$ is rationally smooth, then $\mcI(w)$ is
in fact inductively free. As an immediate consequence of Theorem \ref{T:main},
we have that:
\begin{cor}\label{C:poincare}
    Let $W$ be a finite Weyl group.  If $w \in W$ is rationally smooth, then
    \begin{equation*}
        Q_{\mcI(w)}(t) = \prod_{i=1}^l (1+m_i t),
    \end{equation*}
    where $m_1,\ldots,m_l$ are the exponents of $w$.
\end{cor}
\begin{example}\label{Ex:easytypeA}
    Let $W = S_4$ be the Weyl group of type $A_3$, and let $S = \{s_i : 1 \leq i
    \leq 4\}$ be the standard generating set of simple transpositions, so $s_i = 
    (i\quad i+1)$ in disjoint cycle notation. If $w = s_2 s_3 s_2 s_1$ then 
    \begin{equation*}
        P_w(q) = 1 + 3q + 4q^2 + 3q^3 + q^4 = [2]_q^2 \cdot [3]_q,
    \end{equation*}
    and $w$ has exponents $1, 1, 2$. The inversion hyperplane arrangement
    $\mcI(w)$ is the arrangement in $\C^3$ cut out by the linear functions
    $\alpha_2$, $\alpha_3$, $\alpha_2 + \alpha_3$, and $\alpha_1 + \alpha_2 +
    \alpha_3$, where $\alpha_i$ is dual to the $i$th standard basis vector in $\C^3$. 
    Since $w$ is rationally smooth, Theorem \ref{T:main} and Corollary 
    \ref{C:poincare} imply that $\mcI(w)$ is free with coexponents $1,1,2$,
    and 
    \begin{equation*}
        Q_{\mcI(w)}(t) = (1+t)^2 (1+2t).
    \end{equation*} 
    On the other hand, if $w' = s_2 s_3 s_1 s_2$ then
    \begin{equation*}
        P_{w'}(q) = 1 + 3q + 5q^2 + 4q^3 + q^4,
    \end{equation*}
    so $w'$ is not rationally smooth. The Poincare polynomial
    \begin{equation*}
        Q_{\mcI(w')}(t) = (1+t)(1+3t+3t^2)
    \end{equation*}
    does not factor into linear factors, so $\mcI(w')$ is not free. 
\end{example}
    
We can also characterize when $w$ is rationally smooth and $\mcI(w)$ is not
just free, but supersolvable. 
\begin{defn}
    Let $w = uv$ be a left parabolic decomposition of $w$ with respect to $J
    \subset S$, so $u \in W_J$ and $v \in {}^J W$. Then $w = uv$ is a
    \emph{(left) Billey-Postnikov (BP) decomposition} if $P_w(q) = P_u(q)\cdot {}^J
    P_v(q)$. If in addition $[e,v] \cap {}^J W$ is a chain, we say that $w =
    uv$ is a \emph{(left) chain BP decomposition}. \emph{Right chain BP
    decompositions} are defined similarly.
\end{defn}
Note that a left BP decomposition of $w$ is a right BP decomposition of
$w^{-1}$. 
\begin{defn}
    We say that $w$ has a \emph{complete chain BP decomposition} if either
    $w$ is the identity, or $w$ has a left or right chain BP decomposition
    $w = uv$ or $w = vu$, where $u \in W_J$ has a complete chain
    BP decomposition.
\end{defn}
Let $W$ be an arbitrary finite Coxeter group.  The interval $[e,v] \cap {}^J W$
is a chain if and only if ${}^J P_v(q) = [\ell(v)+1]_q$, so if $w$ has a
complete chain BP decomposition, then $P_w(q)$ is a product of $q$-integers.
Hence if $w$ has a complete chain BP decomposition, then $w$ is rationally
smooth, and it is also possible to define exponents of $w$ as in Definition
\ref{D:exponents}. In types $A$, $B$, $C$, and $G_2$, every rationally smooth
element has a complete chain BP decomposition, but this is not true in types $D$, $E$,
or $F_4$ (see Theorem \ref{T:chainbpexist}). Recall that an arrangement $\mcA$
is supersolvable if and only if $L(\mcA)$ has a complete chain of modular
elements.
\begin{thm}\label{T:supersolv}
    Let $W$ be a finite Weyl group. An element $w \in W$ has a complete chain
    BP decomposition if and only if $w \in W$ is rationally smooth and
    $\mcI(w)$ is supersolvable.
\end{thm}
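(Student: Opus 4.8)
The plan is to prove Theorem \ref{T:supersolv} by establishing a two-way correspondence between complete chain BP decompositions of $w$ and complete chains of modular elements in $L(\mcI(w))$, relying on the connection between chain BP decompositions and modular coatoms promised in Section \ref{S:modular}. The key structural insight I would exploit is that a chain BP decomposition $w = uv$ (with $v \in {}^J W$) should correspond to a modular coatom of $\mcI(w)$, whose associated flat is the localized/sub-arrangement $\mcI(u)$ inside a hyperplane. Supersolvability is precisely the existence of a complete chain of modular flats $V = X_0 \supsetneq X_1 \supsetneq \cdots \supsetneq X_l = \bigcap_{H} H$ in which each $X_{i+1}$ is a modular coatom of the localization $\mcI(w)_{X_i}$, so the recursive structure of a complete chain BP decomposition is exactly mirrored by the recursive structure of a complete modular chain.

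First I would prove the forward direction: if $w$ has a complete chain BP decomposition, then $w$ is rationally smooth and $\mcI(w)$ is supersolvable. Rational smoothness is immediate from the discussion in the excerpt, since a complete chain BP decomposition forces $P_w(q)$ to be a product of $q$-integers, hence palindromic. For supersolvability I would induct on the rank $l$. Given the outermost chain BP decomposition $w = uv$ with respect to $J$, the result of Section \ref{S:modular} supplies a modular coatom $X$ of $\mcI(w)$ with $\mcI(w)_X \iso \mcI(u)$ (as arrangements, after identifying the relevant subspaces). By the inductive hypothesis applied to $u$, which has a complete chain BP decomposition of strictly smaller rank, $\mcI(u)$ is supersolvable and hence has a complete modular chain; prepending $X$ to this chain and checking that modularity is preserved under localization yields a complete modular chain for $\mcI(w)$.

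For the converse I would again induct on $l$, assuming $w$ rationally smooth and $\mcI(w)$ supersolvable. A supersolvable arrangement has a modular coatom $X$, and I would use the correspondence from Section \ref{S:modular} in the reverse direction: a modular coatom of the appropriate form produces a chain BP decomposition $w = uv$ (or $w = vu$) with $\mcI(w)_X \iso \mcI(u)$. The crucial point is that $u$ is itself rationally smooth (its Poincare polynomial divides that of $w$ via the BP factorization, and $P_u(q)$ is palindromic) and that $\mcI(u)$, being the localization of a supersolvable arrangement at a modular flat, is again supersolvable with a complete modular chain inherited from that of $\mcI(w)$. Induction on $u$ then gives it a complete chain BP decomposition, which extends to one for $w$.

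\textbf{The main obstacle} I anticipate is matching the two notions of ``correct form'' at the recursive step: the theorem's definition of complete chain BP decomposition allows either a left or a right decomposition at each stage, whereas modular coatoms carry a fixed geometric orientation, so I must verify that the modular-coatom correspondence of Section \ref{S:modular} is symmetric enough to handle both, and that localizing $\mcI(w)$ at the modular coatom genuinely reproduces $\mcI(u)$ on the nose rather than merely up to combinatorial equivalence of lattices. A second delicate point is ensuring that ``$X$ modular in $L(\mcI(w))$'' correctly descends to ``$X_i$ modular in the localization'' at each step of the chain, since supersolvability is phrased as a complete modular chain and I need each tail of that chain to be a modular chain for the corresponding localized arrangement $\mcI(u)$; this compatibility between modularity and localization is the technical heart of the argument, and I would isolate it as a lemma before running either induction.
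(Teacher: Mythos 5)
Your forward direction is essentially the paper's: a complete chain BP decomposition makes $P_w(q)$ a product of $q$-integers (hence $w$ is rationally smooth), and Theorem \ref{T:bpmodular} produces a modular coatom $X$ with $\mcI(w)_X = \mcI(u)$, so supersolvability follows by recursion exactly as in Remark \ref{R:supersolv}. That half is fine.

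The converse has a genuine gap. You propose to take a modular coatom $X$ of the supersolvable arrangement $\mcI(w)$ and run ``the correspondence from Section \ref{S:modular} in the reverse direction'' to extract a chain BP decomposition. But Theorem \ref{T:bpmodular} is not a bijection between modular coatoms and chain BP decompositions: it only says that a flat \emph{of the special parabolic form} $X = \bigcap \mcI(u)$, where $w = uv$ is a left parabolic decomposition with respect to some $J$, is a modular coatom if and only if $w=uv$ is a chain BP decomposition. Supersolvability hands you \emph{some} modular coatom, with no guarantee that it is the intersection of the hyperplanes $\ker\alpha$ for $\alpha \in I(w)\cap V_J$ for any $J$. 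The paper flags exactly this: it says $w$ has a complete chain BP decomposition if and only if $L(\mcI(w))$ has a complete chain of modular elements ``of a certain form,'' and does not attempt to show that an arbitrary modular chain can be put into that form. You name this as your ``main obstacle'' but offer no mechanism to resolve it, and it is not a technicality one can wave away.

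The paper's actual converse avoids the issue entirely. It invokes the classification (Theorem \ref{T:chainbpexist}) of rationally smooth elements with \emph{no} chain BP decomposition: maximal elements of $D_n$, $E_n$, $F_4$, and the elements $w_{kl}^{\pm 1}$. For each such element one can flatten to the maximal element of $D_4$ (or one is already maximal in $F_4$), whose inversion arrangement is not supersolvable by \cite{HR13}; since flattening preserves supersolvability (Lemma \ref{L:supersolvflat}), none of these elements can have supersolvable $\mcI(w)$. Hence a rationally smooth $w$ with supersolvable $\mcI(w)$ must have a left or right chain BP decomposition $w = uv$ or $w=vu$, and one recurses on $u$ (which is rationally smooth with $\mcI(u)$ supersolvable, again by Lemma \ref{L:supersolvflat}). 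To repair your argument you would either need to prove that some modular coatom of a supersolvable $\mcI(w)$ is always of the parabolic form --- which is not established anywhere in the paper --- or fall back on the classification route.
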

If $w = uv$ is a left parabolic decomposition, then the inversion set $I(w)$ is
the disjoint union of $I(u)$ and $u I(v)$, and
\begin{equation*}
    X = \bigcap \mcI(u) = \bigcap_{\alpha \in I(u)} \ker \alpha 
\end{equation*}
is a flat of $L(\mcI(w))$. The proofs of Theorems \ref{T:main} and
\ref{T:supersolv} are based on a connection between chain BP decompositions
and certain modular coatoms of the inversion arrangement:
\begin{thm}\label{T:bpmodular}
    Suppose that $w = uv$ is a left parabolic decomposition with respect to
    $J$, so $u \in W_J$ and $v \in {}^J W$. Let $X = \bigcap \mcI(u)$. Then
    $w = uv$ is a chain BP decomposition if and only if $X$ is a modular
    coatom of $L(\mcI(w))$.
\end{thm}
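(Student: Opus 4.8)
The plan is to translate both sides of the equivalence into explicit conditions on the inversion roots and then match them. Write $V_K=\vspan\{\alpha_s:s\in K\}$ for $K\subseteq S$, so that $\vspan I(u)=V_{S(u)}$ and $X=V_{S(u)}^{\perp}$. First I would record the basic dictionary. Since $v\in{}^J W$ has no left descent in $J$, one checks that $v^{-1}(R^+\cap V_J)\subseteq R^+$, and hence $I(v)\cap R_J=\emptyset$, where $R_J=R\cap V_J$; as $S(u)\subseteq J$ this gives $I(v)\cap V_{S(u)}=\emptyset$. Combined with $I(w)=I(u)\sqcup uI(v)$ and the fact that $u\in W_{S(u)}$ fixes $X$ pointwise and preserves $V_{S(u)}$, this shows that the hyperplanes of $\mcI(w)$ containing $X$ are exactly $\mcI(u)=\mcI(w)_X$, while the hyperplanes not containing $X$ are exactly the transverse family $u\mcI(v)$. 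Thus $\mcI(w)$ is the localization $\mcI(u)$ together with $|I(v)|$ transverse hyperplanes, and everything reduces to the interaction of these two families with $X$.

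Next I would pin down the coatom condition. The rank of $X$ in $L(\mcI(w))$ is $\codim X=|S(u)|$, and the rank of the center is $\codim\bigl(\bigcap\mcI(w)\bigr)=|S(w)|=|S(u)\cup S(v)|$; hence $X$ is a coatom precisely when $|S(v)\setminus S(u)|=1$. Geometrically this says that all transverse roots have a single common nonzero image modulo $V_{S(u)}$, i.e.\ every transverse hyperplane $H\in u\mcI(v)$ is the graph of a linear functional $\lambda_H$ on $V_{S(u)}$ under the orthogonal projection $\operatorname{pr}\colon V\to V_{S(u)}$ (whose kernel is $X$). On the Bruhat side, a chain $[e,v]\cap{}^J W$ has a unique element of length one, namely the unique simple reflection of $S(v)$ outside $J$, forcing $|S(v)\setminus J|=1$; together with the equality $S(u)=J\cap S(w)$ accompanying a genuine BP decomposition this reproduces $|S(v)\setminus S(u)|=1$. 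So the coatom condition is the ``one new direction'' part of a chain BP decomposition.

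The heart of the geometric side is the modularity criterion. Assuming $X$ is a coatom, every transverse hyperplane is a graph over $\operatorname{pr}$, so for any flat $Y$ one has $X+Y=\operatorname{pr}^{-1}(\operatorname{pr}(Y))$, and a flat containing $X$ is exactly the preimage under $\operatorname{pr}$ of a flat of $\mcI(u)$. Hence $X$ is modular (equivalently, $X+Y$ is a flat for every flat $Y$) if and only if $\operatorname{pr}$ carries every flat of $\mcI(w)$ to a flat of $\mcI(u)$. Since the projected image of an intersection of graphs is cut out by the pairwise differences $\lambda_H-\lambda_{H'}$, this collapses to a single pairwise condition: for all transverse $H,H'$, either $\lambda_H=\lambda_{H'}$ or $\lambda_H-\lambda_{H'}$ is proportional to a root of $I(u)$. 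I would isolate this as a standalone lemma; the point to stress is that the coatom hypothesis is precisely what forces the graph structure, and it is that structure which lets the projection of an arbitrary flat be computed from pairwise data, legitimising the reduction to pairs.

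It then remains to match this pairwise root condition with the chain BP condition, and this is the main obstacle. I expect that the chain condition on $[e,v]\cap{}^J W$ is equivalent to the functionals $\lambda_H$ ($H\in u\mcI(v)$) being totally ordered with consecutive differences single roots---so that the $|I(v)|$ functionals form a ``root string''-type configuration in which all pairwise differences are roots---while the BP condition (maximality of $u$ in $[e,w]\cap W_J$) is exactly what promotes these difference-roots from arbitrary roots of $W_{S(u)}$ to genuine inversions of $u$, i.e.\ elements of $I(u)$. Making this dictionary precise is where the combinatorics of Bruhat order in the quotient $W_J\backslash W$ enters; I would set up an induction in which one peels off the top cover $v_{m-1}\lessdot v_m=v$ of the chain on the combinatorial side and removes the corresponding transverse hyperplane on the geometric side, using that deletion of a transverse hyperplane preserves the property of $X$ being a modular coatom so long as a transverse hyperplane remains. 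Identifying these two operations, and controlling how $I(v)$ changes under them, is itself the delicate step, and I would handle it with the structural description of chain BP decompositions from Section~\ref{S:bp} (in particular that such a decomposition has $S(u)=J\cap S(w)$ and that the reflections realizing the chain linearly order $I(v)$), converting the Bruhat statement into the root-geometric one and, conversely, reconstructing the chain from the linear order on $\{\lambda_H\}$ supplied by modularity. Assembling the coatom equivalence, the pairwise modularity lemma, and this final translation yields both implications.
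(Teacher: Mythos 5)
Your set-up is sound and agrees with the paper's: the dictionary $\mcI(w)_X=\mcI(u)$, transverse hyperplanes $=u\,\mcI(v)$, the coatom condition $|S(v)\setminus S(u)|=1$, and the reduction of modularity of a coatom to the pairwise condition that for every $\alpha,\beta\in I(v)$ the span of $\alpha,\beta$ meets $I(u^{-1})$ (this is the standard criterion the paper quotes as Lemma \ref{L:modular}). But the theorem lives entirely in the step you defer: the equivalence of this pairwise linear-dependence condition with the chain BP condition. You describe that step as "the main obstacle" and "the delicate step" and only sketch a plan for it, so as written there is no proof of either implication. Concretely, two things are missing. For the direction (chain BP $\Rightarrow$ modular), your "root string" heuristic --- that the $\lambda_H$ are totally ordered with \emph{consecutive} differences equal to roots --- does not yield what modularity requires, namely that \emph{every} pair of transverse forms has a dependency with an element of $I(u)$; consecutive differences being roots does not propagate to arbitrary pairs. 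The paper avoids this entirely with a global counting argument: ordering $I(w)$ with $I(u)$ last, it bounds $|\nbc(\mcI(w))|\geq(1+\ell(v))\,|\nbc(\mcI(u))|$, uses the HLSS condition for $u$ (reducing to the maximal element of $W_{J\cap S(v)}$ if necessary) together with $|[e,w]|=(1+\ell(v))|[e,u]|$ to force equality, and concludes that no pair $\{\alpha,\beta\}\subset uI(v)$ can be an $\nbc$-set, which is exactly the modularity criterion.

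For the converse (modular $\Rightarrow$ chain BP), your proposed induction "peeling off the top cover" presupposes the very dictionary you are trying to establish. The paper's argument here is a genuinely delicate root-positivity computation: from a reduced factorization $v=v_0stv_1$ with $t\notin D_R(v_0)$, modularity supplies $\gamma\in I(u^{-1})$ in the span of $v_0\alpha_s$ and $v_0\alpha_t$; writing $\gamma=a\,v_0\alpha_s+b\,v_0\alpha_t$ and examining the coefficient of the unique simple root $\alpha_r\in S(v)\setminus S(u)$ forces $a=0$ and $\gamma=v_0\alpha_t$, whence $v_0t=t'v_0$ with $t'\in D_R(u)$. This claim simultaneously yields the BP property (via Lemma \ref{L:bp}(c)) and, combined with Deodhar's chain property for ${}^JW$, the fact that $[e,v]\cap{}^JW$ is a chain. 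Neither this computation nor a substitute for it appears in your proposal, so the argument has a genuine gap at its core even though the reduction framing is correct.
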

Theorem \ref{T:bpmodular} holds for arbitrary finite Coxeter groups.
As will be explained in Section \ref{S:modular}, the key point of Theorem
\ref{T:bpmodular} is that $[e,v] \cap {}^J W$ is a chain if and only if a
certain linear condition on $I(v)$ holds. Also note that $I(w^{-1}) = -w^{-1}
I(w)$, so $\mcI(w)$ and $\mcI(w^{-1})$ are linearly isomorphic, and Theorem
\ref{T:bpmodular} could equivalently be stated in terms of right parabolic
decompositions. Although we do not develop this further, Theorem
\ref{T:bpmodular} implies that an element $w$ of an arbitrary finite Coxeter
group has a complete chain BP decomposition if and only if $L(\mcI(w))$ has a
complete chain of modular elements of a certain form.
\begin{example}\label{Ex:typeAmax}
    Let $W = A_3$ as in Example \ref{Ex:easytypeA}. The maximal element of $W$
    has reduced expression $w_0 = s_1 s_2 s_1 s_3 s_2 s_1$, and Poincare polynomial
    \begin{equation*}
        P_{w_0}(q) = 1 + 3q + 5q^2 + 6q^3 + 5q^4 + 3q^5 + q^6 = [2]_q \cdot [3]_q \cdot [4]_q.
    \end{equation*}
    Let $u_1 = s_1 s_2 s_1$, $v_1 = s_3 s_2 s_1$, and $J_1 = \{s_1,s_2\}$. Then
    $P_{u_1} = 1+2q+2q^2+q^3 = [2]_q \cdot [3]_q$, while ${}^{J_1} P_{v_1}(q) =
    [4]_q$.  Thus $P_{u_1}(q) \cdot {}^{J_1} P_{v_1}(q) = P_{w_0}(q)$, and
    hence $w_0 = u_1 v_1$ is a chain BP decomposition.

    The inversion hyperplane arrangement $\mcI(w_0)$ is cut out by linear forms
    $\alpha_1$, $\alpha_2$, $\alpha_3$, $\alpha_1+\alpha_2$, $\alpha_2 + \alpha_3$,
    and $\alpha_1 + \alpha_2 + \alpha_3$. Let 
    \begin{equation*}
        X_1 = \bigcap \mcI(u_1) = \ker \alpha_1 \cap \ker \alpha_2 \cap \ker (\alpha_1 + \alpha_2)
            = \vspan\{e_3\},
    \end{equation*}
    where $e_i$ is the $i$th standard basis vector in $\C_3$.  According to
    Theorem \ref{T:bpmodular}, $X_1$ is a modular coatom of $L(\mcI(w_0))$. We will verify
    this directly in Example \ref{Ex:typeAmodular}

    Similarly, let $u_2 = s_1$, $v_2 = s_2 s_1$, and $J_2 = \{s_1\}$. Then
    $P_{u_2}(q) = [2]_q$ and ${}^{J_2} P_{v_2}(q) = [3]_q$, so $u_1 = u_2 v_2$
    is also a chain BP decomposition, and $w_0$ has a complete chain BP
    decomposition. Let 
    \begin{equation*} 
        X_2 = \bigcap \mcI(u_2) = \ker \alpha_1 = \vspan\{e_2,e_3\}.
    \end{equation*} 
    Then $X_2$ is a modular coatom of $L(\mcI(u_1))$, and consequently $\C^3 <
    X_2 < X_1 < 0$ is a complete chain of modular flats for $L(\mcI(w_0))$. 
\end{example}
The reader might like to try the analogous computation for the maximal element
of type $B_3$. 
\begin{example}\label{Ex:hardtypeA}
    The rationally smooth elements in Examples \ref{Ex:easytypeA} and \ref{Ex:typeAmax} 
    have complete BP chain decompositions consisting entirely of left BP decompositions.
    However, this is not always possible, even in type $A$. For a
    counterexample, let $W = S_{8}$ be the Weyl group of type $A_7$, and let
    $S = \{s_i : 1 \leq i \leq 7\}$ again be the standard generating set of simple
    transpositions. Set $v_1 = s_7 s_6$, $v_2 = s_2 s_1$, $v_3 = s_4 s_5 s_6$,
    $v_4 = s_4 s_5$, $v_5 = s_4 s_3 s_2$, $v_6 = s_4 s_3$, and $v_7 = s_4$. 
    Now let
    \begin{equation*}
        w = v_2 v_3 \cdots v_7 v_1 = (s_2 s_1) (s_4 s_5 s_6) (s_4 s_5) (s_4 s_3 s_2) (s_4 s_3) (s_4)
            (s_7 s_6).
    \end{equation*}
    The expression above for $w$ is reduced, so $w$ has length $15$. Let $u_i = 
    v_{i+1} \cdots v_7$ for $1 \leq i \leq 7$ (so $u_7 = e$), and set $J_i = S(u_i)$. 
    Then $w$ has a left chain BP decomposition $w = u_1 v_1$ with respect to
    $J_1$, but $u_1$ has no left chain BP decomposition. However, it does have
    a right chain BP decomposition $u_1 = v_2 u_2$ with respect to $J_2$, and
    in fact $u_i = v_{i+1} u_{i+1}$ is a right chain BP decomposition with
    respect to $J_{i+1}$ for all $1 \leq i \leq 6$.

    Since $w$ has a complete chain BP decomposition, it is rationally smooth,
    and the exponents of $w$ are precisely the lengths of the elements $v_i$,
    $1 \leq i \leq 7$. Thus $w$ has exponents $1, 2, 2, 2, 2, 3, 3$, and the
    Poincare polynomial of $w$ is 
    \begin{align*}
        P_w(q) & = [2]_q \cdot [3]_q^4 \cdot [4]_q^2 \\ 
                & = 1 + 7q + 27 q^2 + 73 q^3 + 152 q^4 + 256 q^5 + 358 q^6 + 422 q^7 \\
                &  + 422 q^8 + 358 q^9 + 256 q^{10} + 152 q^{11} + 72 q^{12} + 27 q^{13}
                    + 7 q^{14} + q^{15}. \\
    \end{align*}
    The inversion hyperplane arrangement $\mcI(w)$ is cut out by $15$ hyperplanes in
    $\C^7$. Since $w$ is rationally smooth, Theorem \ref{T:main} and Corollary \ref{C:poincare}
    imply that $\mcI(w)$ is free with 
    \begin{equation*}
        Q_{\mcI(w)}(t) = (1+t)(1+2t)^4 (1+3t)^2.
    \end{equation*}     
\end{example}
    
\begin{example}
    If $w_0$ is the maximal element of type $D_n$, then $\mcI(w_0)$ is not
    supersolvable \cite{HR13}, nor does $w_0$ have a chain BP decomposition
    (see Theorem \ref{T:chainbpexist}).  
\end{example}

\section{Chain Billey-Postnikov decompositions}\label{S:bp}

If $w = uv$ is the left parabolic decomposition of $w$ with respect to $J \subset
S$, then multiplication gives an injective map 
\begin{equation}\label{E:bpmap}
    [e,u] \times \left([e,v] \cap {}^J W\right) \arr [e,w].
\end{equation}
If $x = u_1 v_1$ is the left parabolic decomposition of an element of $[e,w]$ with respect
$J$, then $v_1 \leq v$. However, it is not true that $u_1 \leq u$, even though
$u_1 \leq w$ and $u_1 \in W_J$. 

\begin{lemma}[\cite{BP05}, \cite{OY10}, \cite{RS13a}]\label{L:bp}
    Let $w = uv$ be the left parabolic decomposition of $w$ with respect to $J$.
    The following are equivalent:
    \begin{enumerate}[(a)]
        \item The map in equation (\ref{E:bpmap}) is surjective (hence bijective).
        \item $u$ is the unique maximal element of $[e,w] \cap W_J$. 
        \item $S(v) \cap J \subseteq D_R(u)$. 
        \item $w = uv$ is a left BP decomposition, so $P_w(q) = P_u(q)\cdot {}^J P_v(q)$.
    \end{enumerate}
\end{lemma}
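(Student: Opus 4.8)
The plan is to establish (a)$\Leftrightarrow$(d) directly, and to run the cycle (a)$\Rightarrow$(b)$\Rightarrow$(c)$\Rightarrow$(a). Two standard facts about Bruhat order do all the work: the subword property, and the fact that $\ell(u_1 v_1) = \ell(u_1) + \ell(v_1)$ whenever $u_1 \in W_J$ and $v_1 \in {}^J W$. As a preliminary, I would check that the map in (\ref{E:bpmap}) is well defined and injective. Injectivity is immediate from uniqueness of the parabolic decomposition: any product $u_1 v_1$ with $u_1 \in [e,u]$ and $v_1 \in [e,v] \cap {}^J W$ is already its own parabolic decomposition, so the two factors can be recovered. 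For well-definedness, a reduced word for $w = uv$ is a concatenation of reduced words for $u$ and $v$, and a reduced word for $u_1 v_1$ concatenates subwords of these, so $u_1 v_1 \le w$ by the subword property.

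Granting injectivity, (a)$\Leftrightarrow$(d) is a count by length: the image of (\ref{E:bpmap}) has length generating function $P_u(q) \cdot {}^J P_v(q)$, while $[e,w]$ has generating function $P_w(q)$; since the image sits inside $[e,w]$, the map is onto exactly when these two polynomials coincide. For (a)$\Rightarrow$(b), surjectivity forces any $x \in [e,w] \cap W_J$ to have trivial ${}^J W$-part in its parabolic decomposition, hence $x \le u$; thus $[e,w] \cap W_J = [e,u]$, whose unique maximal element is $u$.

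I would prove (b)$\Rightarrow$(c) by contraposition. For $s \in S(v) \cap J$ we have $s \le v$, since $s$ lies in the support of $v$. Because lengths add in $w = uv$, the subword property promotes $s \le v$ to $us \le uv = w$ (concatenate a reduced word for $u$ with the single letter $s$, itself a subword of a reduced word for $v$). Hence $us \in [e,w] \cap W_J$, and if $s \notin D_R(u)$ then $us > u$, contradicting the maximality of $u$ from (b). Therefore $S(v) \cap J \subseteq D_R(u)$.

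The main obstacle is (c)$\Rightarrow$(a), where I must show every $x \le w$ lies in the image, i.e. that the $W_J$-part $a$ of its parabolic decomposition $x = ab$ satisfies $a \le u$ (that $b \le v$ is exactly the remark preceding the lemma). Using the subword property I would first write $x = yz$ with $y \le u$, $z \le v$, and additive lengths; taking the parabolic decomposition $z = cd$ and invoking uniqueness identifies $a = yc$, where $c \in W_J$ and $c \le z \le v$, so that $S(c) \subseteq S(v) \cap J \subseteq D_R(u)$ by (c). Everything then reduces to the key sub-claim: if $y \le u$ and $c \in W_K$ with $K \subseteq D_R(u)$, then $yc \le u$. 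I expect to prove this by induction on $\ell(c)$, peeling a generator $s \in K \subseteq D_R(u)$ off the right of $c$ and applying the lifting property of Bruhat order, which yields $zs \le u$ for any $z \le u$ once $s \in D_R(u)$. Assembling this reduction cleanly---in particular verifying that $yc$ really is the $W_J$-part of $x$---is where the substance lies; the remaining implications are formal.
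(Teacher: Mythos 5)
Your proof is correct. Note, however, that the paper itself does not prove Lemma \ref{L:bp}; it is stated with citations to \cite{BP05}, \cite{OY10}, and \cite{RS13a}, so there is no in-paper argument to compare against. Your write-up supplies a complete, self-contained proof from the subword property and the lifting property, and every step checks out: injectivity and well-definedness of the map (\ref{E:bpmap}) follow from uniqueness of parabolic decompositions and from the fact that a reduced word for $u_1v_1$ (lengths being additive) embeds as a subword of a reduced word for $w=uv$; the equivalence (a)$\Leftrightarrow$(d) is then a coefficientwise comparison of length generating functions; and (a)$\Rightarrow$(b)$\Rightarrow$(c) are formal. The substance is indeed in (c)$\Rightarrow$(a), and your reduction works: splitting a reduced subword of $w$ at the $u$/$v$ boundary gives $x=yz$ with $y\le u$, $y\in W_J$, $z\le v$ and additive lengths (each segment of a reduced word is reduced); writing $z=cd$ with $c\in W_J$, $d\in{}^JW$ and invoking uniqueness identifies the $W_J$-part of $x$ as $yc$ with $S(c)\subseteq S(v)\cap J\subseteq D_R(u)$; and the sub-claim that $yc\le u$ follows by induction on $\ell(c)$, peeling a right descent $s\in S(c)\subseteq D_R(u)$ and using the standard consequence of the lifting property that $z\le u$ and $s\in D_R(u)$ imply $zs\le u$. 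The only loose ends are notational (you call the argument for (b)$\Rightarrow$(c) a contraposition when it is a direct contradiction, and you should record explicitly that $y\le u$ forces $y\in W_J$ so that $yc$ really lies in $W_J$), but these are cosmetic.
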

We refer to \cite{OY10} or \cite{RS13b} for more information about BP
decompositions. 

The fundamental theorem for Billey-Postnikov decompositions is:
\begin{thm}[\cite{Ga98} \cite{Bi98} \cite{BP05}]\label{T:bpexist}
    If $w \in W$ is rationally smooth, $\ell(w) \geq 2$, then $w$ has either a
    left or right BP decomposition with respect to $J = S \setminus \{s\}$,
    where $s \in S(w)$ is a leaf of the Dynkin diagram for $W_{S(w)}$. 
\end{thm}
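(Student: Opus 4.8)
The plan is to reduce to the irreducible full-support case and then use the palindromicity of $P_w(q)$ to force one of the two maximal-parabolic decompositions to be Billey--Postnikov. Since rational smoothness and the statement depend only on $S(w)$, I would first replace $W$ by $W_{S(w)}$ and assume $S(w) = S$. If the Dynkin diagram of $W$ is disconnected, then $w$ is a length-additive product of its restrictions to the connected components, $P_w(q)$ factors accordingly, and a leaf of a connected component reduces the statement to that component; so I may assume the Dynkin diagram is a connected tree and $\ell(w) \geq 2$, whence $|S| \geq 2$.

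Fix a leaf $s$, and set $J = S \setminus \{s\}$, a maximal parabolic for which $W_J$ remains irreducible (removing a leaf from a tree keeps it connected). Let $w = uv$ be the left parabolic decomposition ($u \in W_J$, $v \in {}^J W$) and $w = v' u'$ the right one ($u' \in W_J$, $v' \in W^J$). By Lemma \ref{L:bp} it suffices to prove that at least one of the two length-additive injections of the form (\ref{E:bpmap}) is surjective; equivalently, by part (c), that either $S(v) \cap J \subseteq D_R(u)$ or $S(v') \cap J \subseteq D_L(u')$.

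The engine is the following. Because $w = uv$ is length-additive, the map (\ref{E:bpmap}) shows that $P_u(q)\cdot {}^J P_v(q)$ is dominated coefficient-by-coefficient by $P_w(q)$, with agreement of the top coefficient (the image of $(u,v)$ is $w$, of degree $\ell(u)+\ell(v)=\ell(w)$) and of the constant term. I would measure the failure of surjectivity by the nonnegative defect polynomial $P_w(q) - P_u(q)\,{}^J P_v(q)$ and argue that, if both the left and the right maps failed, palindromicity of $P_w(q)$ would be violated. Here the leaf hypothesis is essential: since $s$ is a leaf, every nonidentity element of ${}^J W$ has unique left descent $s$, so $v$ has a reduced word tracing a path into the diagram from $s$ and $S(v)$ is a connected subinterval of the tree containing $s$; this pins down $S(v)\cap J$ and the precise elements of $[e,w]$ lying outside the image of (\ref{E:bpmap}). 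The same analysis applies on the right with $D_L$ in place of $D_R$.

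The main obstacle is exactly this last step: translating the simultaneous failure of both descent containments into a concrete asymmetry of $P_w(q)$. I expect the cleanest route to be a coatom count at the top of the interval paired with the atom count at the bottom: the atoms of $[e,w]$ are the $l$ simple reflections, so by palindromicity there are exactly $l$ coatoms $x \lessdot w$, and I would show that the coatoms produced by the two parabolic decompositions already exhaust these unless one decomposition is BP, forcing a surplus coatom that breaks palindromicity. Making this count exact---and in particular handling the delicate case where $s$ lies in neither $D_L(w)$ nor $D_R(w)$, where the unique neighbor of the leaf must be used to locate the offending element---is where the real work lies and is the step I expect to demand the most care. The finite-type hypothesis enters essentially only through the descent/support dictionary of Lemma \ref{L:bp}, so I would aim for an argument that is uniform across types.
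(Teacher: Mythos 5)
There is a genuine gap, and it appears at the very first move rather than in the step you flag as delicate. You fix an \emph{arbitrary} leaf $s$ and aim to show that one of the two parabolic decompositions with respect to $J = S \setminus \{s\}$ must be BP. That statement is strictly stronger than Theorem \ref{T:bpexist}, which only asserts the existence of \emph{some} leaf that works, and the stronger statement is false outside type $A$: as recorded in fact \ref{detail1} of Section \ref{S:bp}, the freedom to prescribe the leaf is a special feature of type $A$ and is ``not necessarily possible in other types.'' So an argument that starts from a fixed leaf and tries to derive a contradiction with palindromicity from the simultaneous failure of both descent containments is attempting to prove something false, and the contradiction cannot materialize uniformly. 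Relatedly, your structural description of $v \in {}^J W$ (a reduced word ``tracing a path into the diagram from $s$,'' with $S(v)$ a ``subinterval'') is a type-$A$ picture: while it is true that $S(v)$ is connected and contains $s$, for a leaf of $D_n$ or $E_n$ this connected support need not be a path, and this branching is precisely where the leaf-selection problem bites.

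Beyond that, the step you yourself identify as ``where the real work lies''---converting the failure of both decompositions into an asymmetry of $P_w(q)$ via an atom/coatom count---is not carried out, and it is the entire content of the theorem; the count of $l$ atoms and hence $l$ coatoms is correct but is only the starting point of the known arguments. For comparison, the paper does not prove Theorem \ref{T:bpexist} at all: it is assembled from the literature, with Gasharov handling type $A$, Billey the classical types, and Billey--Postnikov the exceptional types (the last partly by machine verification). No uniform, type-free proof of the kind you are aiming for is given there, and the known arguments all involve a type-dependent choice of which leaf to peel off. A correct proof along your lines would have to begin by specifying how the leaf is selected from $w$ (for instance via its descents or the shape of $S(w)$), and the coatom count would then have to be made exact for that choice, which in practice forces a case analysis by Dynkin type.
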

Theorem \ref{T:bpexist} was proved by Gasharov \cite{Ga98} for type $A$, by
Billey \cite{Bi98} in all classical types, and by Billey-Postnikov \cite{BP05}
in the exceptional types. Note that if $w = uv$ is a BP decomposition with
respect to $J$ and $w$ is rationally smooth, then $u$ is rationally smooth, and
${}^J P_v(q)$ is palindromic \cite{BP05}. 

We note four particular details of Theorem \ref{T:bpexist}:
\begin{enumerate}[label=(d.\arabic*)]
    \item\label{detail1} If $W$ is of type $A$, $w \in W$ is smooth, and $s$ is a fixed leaf
        of $W_{S(w)}$, then $w$ has either a left or right BP decomposition
        with respect to $J = S \setminus \{s\}$. (In other words, it is 
        possible to choose the leaf $s$ used in the decomposition, which is not
        necessarily possible in other types.) \cite{Ga98} \cite{Bi98}
    \item\label{detail2} If $W$ is simply-laced and $w \in W$ is rationally smooth with a
        BP decomposition $w = uv$ with respect to $J \setminus \{s\}$, $s$ a
        leaf of $S(w)$, then $v$ is the maximal element of ${}^{J \cap
        S(v)} W_{S(v)}$ (\cite{BP99} \cite{OY10}, see also \cite{RS13b}). 
    \item\label{detail3} If $W$ is of type $A$, $B$, or $G_2$, $s$ is a leaf of $W$,
        $w$ belongs to ${}^J W$, where $J = S \setminus \{s\}$, and ${}^J P_w(q)$
        is palindromic, then $[e,w] \cap {}^J W$ is a chain in Bruhat order (or
        equivalently, ${}^J P_w(q) = [\ell(w)+1]_q$) (\cite{Bi98} \cite{OY10}, see
        also \cite{RS13b}). This is also true in type $F_4$, except when $w$ is
        the maximal element of $W^J$ (\cite{OY10}, see also \cite{RS13b}).
    \item\label{detail4} If $W$ is of type $D$ and $w \in W$ is rationally smooth, then either
        $w$ is the maximal element of $W_{S(w)}$, or $w$ has a chain BP
        decomposition \cite[Proposition 6.3]{Bi98}. Indeed, suppose $w = uv$ is
        a BP decomposition, where $v \in {}^J W$, $u \in W_J$, and $J = S
        \setminus \{s\}$ for some leaf $s$ of $S(w)$. By the second fact above, $v$
        is maximal in ${}^{J \cap S(v)} W_{S(v)}$, so by Lemma \ref{L:bp} either
        $w$ is maximal in $W_{S(w)}$, or $S(v) \subsetneq S(w)$. In the latter
        case $W_{S(v)}$ is of type $A$, and consequently $[e,v] \cap {}^J W$ is
        a chain by fact \ref{detail3} above.  
\end{enumerate}
By facts \ref{detail3} and \ref{detail4} above, if $w \in W$ is rationally smooth and does
not have a chain BP decomposition, then $W$ must have type $D$, $E$, or $F_4$.
In addition, in types $D$ and $F_4$ all non-maximal rationally smooth elements
have a chain BP decomposition. The main object of this section is to
determine which rationally smooth elements in type $E$ do not have a chain BP
decomposition.  We use the following labelling for the Dynkin diagram of $E_8$:
\begin{equation*}
    \vcenter{\xymatrix{ & & & & 1 \ar@{-}[d] & & \\
        8 \ar@{-}[r] & 7 \ar@{-}[r] & 6 \ar@{-}[r] & 5 \ar@{-}[r] & 4 \ar@{-}[r] & 
        3 \ar@{-}[r] & 2 \\ }}
\end{equation*}
We let $S_k = \{s_1,\ldots,s_k\}$, where $s_i$ is the simple reflection
corresponding to the node labelled by $i$, and take the convention that $E_6$
and $E_7$ are embedded inside $E_8$ as $W_{S_6}$ and $W_{S_7}$. Finally, we let
$J_k = S_k \setminus \{s_2\}$. Let $\tilde{u}_k$ be the maximal element of
$W_{J_k}$ and $\tilde{v}_k$ be the maximal element of $W_{S_k}^{J_k}$. Note
that $\tilde{u}_k$ belongs to a parabolic subgroup of type $D$, while
$\tilde{v}_k$ belongs to a parabolic subgroup of type $D$ when $k=5$, and a
parabolic subgroup of type $E$ when $k \geq 6$. The following theorem combines
our type $E$ results with the results for other types mentioned above:
\begin{thm}\label{T:chainbpexist}
    Suppose that $w\in W$ is a rationally smooth element of a finite Weyl group
    $W$, that $\ell(w) \geq 2$, and that $w$ has no chain BP decomposition.
    Then $w$ is one of the following elements:
    \begin{itemize}
        \item The maximal element of a parabolic subgroup of type $D_n$, $n \geq 4$.
        \item The maximal element of a parabolic subgroup of type $E_n$, $n=6,7,8$.
        \item The element $w_{kl} = \tilde{v}_l \tilde{u}_k$, or its inverse
            $w_{kl}^{-1}$, in a parabolic subgroup of type $E_k$, where $5 \leq
            l < k \leq 8$.
        \item The maximal element of $F_4$.
    \end{itemize}
\end{thm}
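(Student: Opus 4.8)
The plan is to prove Theorem~\ref{T:chainbpexist} by combining the structural facts \ref{detail1}--\ref{detail4} already recorded above with an explicit, case-by-case analysis in type $E$. The facts \ref{detail3} and \ref{detail4} immediately reduce the problem: by \ref{detail3}, any rationally smooth element supported in type $A$, $B$, or $G_2$ always has a chain BP decomposition, and combined with the type $F_4$ caveat in \ref{detail3}, the only type $F_4$ counterexample is the maximal element. By \ref{detail4}, in type $D$ the only rationally smooth elements without a chain BP decomposition are the maximal elements of parabolic subgroups of type $D_n$. So the genuine content is to classify the rationally smooth elements of type $E$ (i.e. supported on a parabolic $W_{S(w)}$ of type $E_k$, $k=6,7,8$) that fail to have a chain BP decomposition, and to show they are exactly the maximal elements and the elements $w_{kl}=\tilde v_l \tilde u_k$ together with their inverses.

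First I would set up the recursion coming from Theorem~\ref{T:bpexist}: since $\ell(w)\geq 2$, $w$ has a left or right BP decomposition $w=uv$ (or $w=vu$) with respect to $J=S\setminus\{s\}$ for a leaf $s$ of $S(w)$, where $u$ is rationally smooth and ${}^J P_v(q)$ is palindromic. By fact \ref{detail2}, since $E$ is simply-laced, $v$ must be the maximal element of ${}^{J\cap S(v)}W_{S(v)}$. The key dichotomy, via Lemma~\ref{L:bp}, is whether $S(v)=S(w)$ (in which case $w$ is the maximal element of $W_{S(w)}$) or $S(v)\subsetneq S(w)$. In the latter case $W_{S(v)}$ is a proper parabolic, and I would analyze when the quotient interval $[e,v]\cap {}^JW$ fails to be a chain: this happens precisely when $W_{S(v)}$ is itself of type $D$ or $E$ and $v$ is large enough that ${}^JP_v(q)\neq[\ell(v)+1]_q$. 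This is exactly where the elements $\tilde v_l\tilde u_k$ should emerge, with $\tilde v_l$ playing the role of the quotient factor whose support forces a non-chain interval.

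The main computational step is to pin down, for each leaf $s$ of $E_k$ and each admissible $v=\tilde v_l$, whether the resulting $u=\tilde u_k$ can be further decomposed with a chain BP decomposition, recursing on $u$. The point is that $\tilde u_k$ lives in a type $D$ parabolic $W_{J_k}$, so by fact \ref{detail4} it is either maximal (no chain decomposition, forcing $w_{kl}$ onto the exceptional list) or decomposes further; and $\tilde v_l$ is chosen so that $[e,\tilde v_l]\cap{}^{J}W$ is exactly a non-chain, which is what obstructs $w$ itself from having a chain decomposition through this leaf. I would verify, for the finitely many remaining leaves and supports, that every \emph{other} rationally smooth element does admit a chain BP decomposition through some leaf. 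Because type $E$ rational smoothness can be checked via the root-system pattern-avoidance criterion of Billey--Postnikov \cite{BP05}, and because there are only finitely many relevant parabolic subgroups and leaf choices, this analysis is finite and can in principle be carried out (or confirmed) by direct computation.

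The hard part will be the bookkeeping in type $E$: one must check \emph{all} leaves $s$ of $S(w)$, since $w$ fails to have a chain BP decomposition only if no choice of leaf yields one, and this quantifier over leaves (combined with the left/right and inverse symmetry noted after Theorem~\ref{T:bpmodular}) is what makes the classification delicate rather than routine. Establishing that the listed elements $w_{kl}$ and $w_{kl}^{-1}$ genuinely exhaust the type $E$ exceptions---and in particular that no element supported on $E_k$ slips through with a non-chain decomposition at every leaf without being one of these---requires either a careful uniform argument tracking the support $S(v)$ across all decompositions or an exhaustive (likely computer-assisted) verification over the rationally smooth elements of $E_6$, $E_7$, and $E_8$. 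I expect the uniform argument to reduce most cases quickly via facts \ref{detail2}--\ref{detail4}, leaving a small explicit residue in $E_k$ to be handled by hand or by machine.
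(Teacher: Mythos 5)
Your reduction to type $E$ via facts \ref{detail3} and \ref{detail4} matches the paper exactly, and your identification of the dichotomy ($S(v)=S(w)$ forces $w$ maximal; otherwise $v$ is maximal in a proper parabolic quotient) is the right starting point. But the proposal stops where the actual proof begins: the entire type $E$ classification is deferred to ``a careful uniform argument tracking the support'' or ``an exhaustive (likely computer-assisted) verification,'' and neither is carried out. The paper's proof is precisely that uniform argument, and it hinges on a tool you never invoke: Lemma \ref{L:blocking}, which says that if $w=vu$ is a parabolic decomposition and $s$ is adjacent to $S(v)$ but not contained in it, then $s\notin D_L(w)$. This is what forces the support of each secondary factor to swallow $S(v)\setminus\{s\}$ whenever the factors fail to commute, which in turn forces the various pieces to be maximal elements (via \ref{detail2}) and pins $w$ down to $w_{kl}$ or a maximal element. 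Without this lemma or a substitute, the case analysis does not close.

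A second, more structural gap: you frame the problem as checking ``all leaves $s$ of $S(w)$, since $w$ fails to have a chain BP decomposition only if no choice of leaf yields one.'' But Theorem \ref{T:bpexist} only guarantees a BP decomposition for \emph{some} leaf in type $E$ (fact \ref{detail1} lets you choose the leaf only in type $A$), so quantifying over leaves is not available as a proof strategy. The paper instead works from the single decomposition $w=vu$ it is handed, then repeatedly re-decomposes $u$ (whose support is usually of type $A$, where \ref{detail1} \emph{does} let you choose) and recombines factors --- producing chain BP decompositions of the form $w=(vu_0)u_1$ or $w=u_1(vu_0)$ with respect to parabolic subsets different from the original $J$. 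This recombination step is the engine of the proof and is absent from your outline; you also implicitly assume that a non-chain first factor $v=\tilde v_l$ forces $u=\tilde u_k$, whereas establishing that forcing is exactly the work that Lemma \ref{L:blocking} and fact \ref{detail2} must do. As written, the proposal is a correct plan but not a proof.
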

As discussed above, for classical types Theorem \ref{T:chainbpexist} is proved
in \cite{Ga98} and \cite{Bi98}. We give a proof for type $E$. 
\begin{example}\label{Ex:typeD}
    Let $W = D_5$, with generating set $S = \{s_1,\ldots,s_5\}$, where the
    vertices of the Dynkin diagram are labelled as:   
    \begin{equation*}
        \xymatrix{              & 3 \ar@{-}[d] &    &      \\
                                            1 \ar@{-}[r] & 2 \ar@{-}[r] & 4 \ar@{-}[r] & 5 }
    \end{equation*}
    Let $w_0$ be the maximal element of $W$, and let $w_1$ be the maximal
    element of $W_J$, where $J = \{s_1,\ldots,s_4\}$. According to Theorem
    \ref{T:chainbpexist}, neither $w_0$ nor $w_1$ have a chain BP decomposition.
    On the other hand, we can find elements like $w = w_1 (s_5 s_4 s_2)$ which
    have a chain BP decomposition, but which do not have a complete chain BP 
    decomposition.
\end{example}

For the proof of Theorem \ref{T:chainbpexist}, we use the following terminology
and lemma from \cite{RS13b}: We say that a subset $T \subset S$ is
\emph{connected} if the Dynkin diagram restricted to $T$ is connected. Note
that if $J = S \setminus \{s\}$ and $w \in W^J$ then $S(w)$ is connected. We
say that two elements $s,t \in S$ are adjacent if they are connected by an edge
in the Dynkin diagram.
\begin{lemma}[\cite{RS13b}, Lemma 5.3]\label{L:blocking}
    Let $w = vu$ be a parabolic decomposition with respect to some $J$, and
    suppose $s \in S$ is adjacent to an element of $S(v)$, but is not contained
    in $S(v)$. Then $s$ does not belong to $D_L(w)$. 
\end{lemma}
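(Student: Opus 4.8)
The plan is to work entirely with lengths and the right parabolic quotient $W^J$, showing directly that $\ell(sw) = \ell(w) + 1$, which is exactly the statement $s \notin D_L(w)$. Here $w = vu$ is read as the right parabolic decomposition, so $v \in W^J$ and $u \in W_J$, giving $\ell(w) = \ell(v) + \ell(u)$. I claim it suffices to prove (i) $\ell(sv) = \ell(v) + 1$ and (ii) $sv \in W^J$: granting these, $sw = (sv)u$ is again a length-additive product of an element of $W^J$ with an element of $W_J$, whence $\ell(sw) = \ell(sv) + \ell(u) = \ell(v) + 1 + \ell(u) = \ell(w) + 1$, and $s \notin D_L(w)$.

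Claim (i) is immediate: every left descent of $v$ lies in its support $S(v)$, and $s \notin S(v)$ by hypothesis, so $s \notin D_L(v)$ and $\ell(sv) = \ell(v) + 1$. For claim (ii) I would use the root criterion $sv \in W^J \iff (sv)\alpha_{s'} \in R^+$ for all $s' \in J$. Since $v \in W^J$ we already have $v\alpha_{s'} \in R^+$, and the simple reflection $s$ sends every positive root to a positive root except $\alpha_s \mapsto -\alpha_s$. Hence $(sv)\alpha_{s'} \in R^+$ can fail only if $v\alpha_{s'} = \alpha_s$ for some $s' \in J$, that is, only if $v^{-1}\alpha_s$ is a simple root. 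So claim (ii) reduces to showing that $v^{-1}\alpha_s$ is \emph{not} a simple root.

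The crux, and the main obstacle, is this last reduction, where the adjacency hypothesis finally enters. Since $v^{-1} \in W_{S(v)}$ and $s \notin S(v)$, applying $v^{-1}$ leaves the $\alpha_s$-coordinate fixed: $v^{-1}\alpha_s = \alpha_s + \sum_{t \in S(v)} c_t \alpha_t$, which is therefore a positive root with $\alpha_s$-coefficient $1$. Such a root is simple if and only if it equals $\alpha_s$, i.e. if and only if $v$ fixes $\alpha_s$. I would rule this out via the standard description of point stabilizers in a finite reflection group: the stabilizer of $\alpha_s$ inside $W_{S(v)}$ is the parabolic subgroup generated by the reflections $s_\beta$ it contains, namely those with $\beta \in R_{S(v)}$ and $(\beta, \alpha_s) = 0$. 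For a positive root $\beta = \sum_{t \in S(v)} b_t \alpha_t$ (so $b_t \geq 0$), the pairing $(\beta, \alpha_s) = \sum_t b_t (\alpha_t, \alpha_s)$ receives nonzero contributions only from the simple reflections $t$ adjacent to $s$, and each such term is nonpositive; hence $(\beta, \alpha_s) = 0$ forces $b_t = 0$ for every $t$ adjacent to $s$. Thus this stabilizer lies in $W_{S(v) \setminus N(s)}$, where $N(s)$ denotes the neighbors of $s$ in the Dynkin diagram. Since $s$ is adjacent to $S(v)$, we have $N(s) \cap S(v) \neq \emptyset$, so $W_{S(v)\setminus N(s)}$ is a proper parabolic of $W_{S(v)}$; as $v$ has full support $S(v)$, it cannot lie in this proper parabolic and therefore does not fix $\alpha_s$. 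Consequently $v^{-1}\alpha_s$ is not simple, which establishes (ii) and completes the proof.
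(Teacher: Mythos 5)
Your proof is correct. Note first that the paper itself gives no proof of this lemma --- it is imported verbatim from \cite{RS13b} (Lemma 5.3) --- so there is no in-paper argument to compare against; I can only assess your argument on its own terms. Each step checks out: reading $w=vu$ as the right parabolic decomposition ($v\in W^J$, $u\in W_J$) matches how the lemma is used in the proof of Theorem \ref{T:chainbpexist}; the reduction to showing $\ell(sv)=\ell(v)+1$ and $sv\in W^J$ is valid because products $W^J\times W_J$ are always length-additive; $s\notin S(v)$ does give $s\notin D_L(v)$; and the root criterion correctly reduces claim (ii) to showing $v^{-1}\alpha_s$ is not simple. The crux is handled cleanly: since $v^{-1}\in W_{S(v)}$ fixes the $\alpha_s$-coefficient, $v^{-1}\alpha_s$ is simple only if $v$ fixes $\alpha_s$, and the pointwise-stabilizer theorem for finite reflection groups (the isotropy group of a point is generated by the reflections it contains) together with the sign argument $(\beta,\alpha_s)=\sum_t b_t(\alpha_t,\alpha_s)\le 0$ confines that stabilizer to $W_{S(v)\setminus N(s)}$, which $v$ cannot lie in because its support is all of $S(v)$ and $N(s)\cap S(v)\neq\emptyset$. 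This is where the adjacency hypothesis enters, exactly as it must. The only caveat worth recording is that your appeal to the finite-reflection-group stabilizer theorem and to the Euclidean inner product ties the argument to finite Coxeter groups, which is all this paper needs.
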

\begin{proof}[Proof of Theorem \ref{T:chainbpexist}]
    We now assume that $W$ has type $E$. Note that since $W$ is finite, the theorem could
    be checked on a computer. Instead, we give a complete proof using Lemma
    \ref{L:blocking}. Suppose $w$ is a rationally smooth element of $E_8$, with
    $S(w) = S_k$ for some $k \geq 6$.  Take a BP decomposition of $w$ with
    respect to $J = S_k \setminus \{s\}$, where $s$ is a leaf of $S(w)$. We can
    assume without loss of generality that $w$ has a right BP decomposition of
    this form, so $w = vu$, where $v \in W_{S_k}^{J}$ and $u \in W_{J}$. We
    consider the different possibilities for $s$ separately.

    \emph{Case 1}: $s = s_k$. As noted above, if $[e,v] \cap W^J$ is not a
    chain, then $W_{S(v)}$ must have type $D$ or $E$. By fact \ref{detail2},
    $v$ is maximal in $W_{S(v)}^{S(v) \cap J}$, so if in addition $w$ is not
    maximal of type $E$, then we must have $S(v) = J_k$, and $W_{S(v)}$ must be of
    type $D$. Now $J \cap S(v) = J_{k-1} \subset D_L(u)$ by Lemma \ref{L:bp},
    so we can write $u = u_0 u_1$, where $u_0$ is the maximal element of $W_{J_{k-1}}$.
    Then $v$ is the maximal element of $W_{J_{k}}^{J_{k-1}}$, so $v u_0 =
    \tilde{u}_k$ is the maximal element of $W_{J_k}$. Because $u_0$ is maximal,
    $w = (vu_0) u_1$ is a left BP decomposition with respect to $S(w) \setminus
    \{s_2\}$. If $[e,u_1] \cap {}^{J_k} W$ is not a chain, then $S(u_1) \supset
    S_5$, and $u_1 = \tilde{v}_l^{-1}$ for some $l < k$ by \ref{detail2}. So $w
    = \tilde{u}_k \tilde{v}_l^{-1} = w_{kl}^{-1}$.  

    \emph{Case 2}: $s = s_1$. If $[e,v] \cap W^J$ is not a chain, and $w$ is not
    maximal, then we must have $S(v) \supset J_5$, and $s_2$ and $s_k$
    cannot both belong to $S(v)$. Suppose $S(v)$ contains one of $\{s_2,s_k\}$,
    and let $t$ be the element which is not contained in $S(v)$. 
    Since $W_{S(u)}$ has type $A$, the element $u$ has a BP decomposition with
    respect to $J' = J \setminus \{t\}$ by fact \ref{detail1}. If this is a left BP decomposition
    $u = u_0 u_1$, $u_0 \in W_{J'}$, $u_1 \in {}^{J'} W_J$, then $w = (vu_0) u_1$
    is a left chain BP decomposition. Otherwise, we have a right BP decomposition
    $u = u_1 u_0$, $u_0 \in W_{J'}$, $u_1 \in W^{J'}_J$. Then $W_{S(u_1)}$ has
    type $A$ and $[e,u_1] \cap W_{J}^{J'}$ is a chain by fact \ref{detail3}. If $S(u_1)$ and $S(v)$
    pairwise commute, then $w = u_1 (v u_0)$ is a chain BP decomposition.
    Otherwise, an element of $S(v)$ must be adjacent to or contained
    in $S(u_1)$. But since $S(v) \setminus \{s_1\} $ is connected and is
    contained in $D_L(u)$ by Lemma \ref{L:bp}, we can conclude that $S(v)
    \setminus \{s_1\} \subset S(u_1)$ by Lemma \ref{L:blocking}.  Consequently
    $S(u_1)$ contains $\{s_2,s_k\}$, so $u_0$ is the maximal element of
    $W_{J'}$, and $u$ is the maximal element of $W_{J}$ by \ref{detail2}. Therefore $u$ has a
    left BP decomposition with respect to $J'$. 

    This leaves the possibility that neither $s_2$ or $s_k$ lies in $S(v)$.
    By \ref{detail1}, we can take a BP decomposition of $u$ with respect to $J'
    = J \setminus \{s_k\}$.  If we get a left decomposition, or a right
    decomposition $u = u_1 u_0$
    where $S(u_1)$ pairwise commutes with $S(v)$, then we get a chain
    decomposition of $w$ as above. Suppose we get a right BP decomposition $u =
    u_1 u_0$, $u_1 \in W_{J}^{J'}$, $u_0 \in W_{J'}$, where $S(v)$ and $S(u_1)$
    do not pairwise commute. By Lemma \ref{L:blocking}, $S(u_1) \supset S(v)
    \setminus \{s_1\}$. If $S(u_1) = J$, then as above $u$ is the maximal
    element of $W_{J}$, so we can take a left BP decomposition of $u$ to get a
    chain decomposition of $w$. Otherwise $S(u_1) = J \setminus \{s_2\}$ since
    $J_5 \subset S(v) \setminus \{s_1\}$ and $S(u_1)$ is connected. Since 
    $S(u_1) \cap J' = J' \setminus \{s_2\} \subset D_L(u_0)$ by Lemma
    \ref{L:bp}, we can write $u_0 = u_0' u_0''$, where $u_0'$ is the maximal
    element of $W_{J' \setminus \{s_2\}}$ and $u_0'' \in {}^{J' \setminus
    \{s_2\}} W_{J'}$. Then $w = (v u_1 u_0') u_0''$ is a chain BP
    decomposition by \ref{detail3}. 

    \emph{Case 3}: $s = s_2$. In this case, $v \in W^{J_k}_{S_k}$. If $[e,v]
    \cap W^{J_k}$ is not a chain and $w$ is not maximal, then $S(v) = S_l$ for
    some $k > l \geq 5$, and $v = \tilde{v}_l$ by \ref{detail2}. If $u$ has a left BP
    decomposition $u = u_0 u_1$ with respect to $J_{k-1} = S(u) \setminus
    \{s_k\}$, then either $W_{S(u_1)}$ is of type $A$, in which case $w =
    (vu_0) u_1$ is a chain BP decomposition by \ref{detail3}, or $S(u_1) = J_k$, in which case
    $u = \tilde{u}_k$ by \ref{detail2}, so that $w = w_{kl}$. Similarly if $u$ has a right BP
    decomposition $u = u_1 u_0$ with respect to $J_{k-1}$ and $S(u_1)$ pairwise
    commutes with $S(v)$, then $w = u_1(vu_0)$ is a chain BP decomposition
    (since $S(u_1)$ cannot meet $S(v)$, $W_{S(u_1)}$ is of type $A$ and we can
    apply \ref{detail3}). If $S(u_1)$ does not pairwise commute with $S(v)$
    then $S(u_1) \supset J_l$, so $S(u_1) = J_k$ by Lemma \ref{L:blocking}, and
    again $u = \tilde{u}_k$ by \ref{detail2}.

    If $u$ does not have a BP decomposition with respect to $J_{k-1}$, then $u$
    has a BP decomposition with respect to $J' = J_k \setminus \{t\}$, where $t
    \in \{s_1,s_3\}$. Suppose $u$ has a right BP decomposition with respect to
    $J'$, so $u = u_1 u_0$, $u_1 \in W^{J'}_{J_k}$, $u_0 \in
    W_{J'}$. Then $S(u_1)$ and $S(v)$ both contain $t$, so by
    Lemma \ref{L:blocking}, $S(u_1) \supset S(v) \setminus \{s_2\} = J_l$.
    If $s_k \in S(u_1)$ then $S(u) = J_k$ and $u = \tilde{u}_k$ by
    \ref{detail2}. Otherwise $W_{S(u_0)}$ is of type $A$, so by \ref{detail1} we can take a
    decomposition of $u_0$ with respect to $K = J' \setminus \{s_k\}$. If we
    get a left BP decomposition of $u_0$, or a right BP decomposition $u_0 =
    u_0'' u_0'$, $u_0'' \in {}^K W_{J'}$, $u_0' \in W_K$, where $S(u_0'')$
    pairwise commutes with $S(u_1)$, we get a chain BP decomposition of $w$ by \ref{detail3}.
    Otherwise by Lemma \ref{L:blocking} we must have a right BP decomposition
    $u_0 = u_0'' u_0'$ where $S(u_0'') \supset S(u_1) \setminus \{t\}$. Since
    $s_k \in S(u_0'')$, fact \ref{detail2} implies that $u_0$ is the maximal
    element of $W_{J'}$, and hence has a left BP decomposition with respect to $K$. 

    Finally, suppose that $u$ has a left BP decomposition $u = u_0 u_1$ where
    $u_1 \in {}^{J'} W_{J_k}$, $u_0 \in W_{J'}$. If $W_{S(u_1)}$ is of type $A$,
    then $w$ has a chain BP decomposition $w = (v u_0) u_1$ by \ref{detail3}. If $W_{S(u_1)}$ is
    not of type $A$ then $S(u_1)$ must contain $J_5$. If $s_k \in S(u_1)$ then
    $u = \tilde{u}_k$ by \ref{detail2}. Suppose $s_k \not\in S(u_1)$. Since $S(u_0)$ is of type
    $A$, by \ref{detail1} we can take a BP decomposition of $u_0$ with respect to $K$.  If $u_0
    = u_0' u_0''$ where $u_0'' \in W_{J'}^{K}$ and $u_0' \in W_{K}$, and
    $S(u_0'')$ pairwise commutes with $S(u_1)$ then $w = (v u_0' u_1) u_0''$
    is a chain BP decomposition by \ref{detail3}. If $S(u_0'')$ does not pairwise commute
    with $S(u_1)$ then by Lemma \ref{L:blocking}, $S(u_0'') \supset S(u_1)
    \setminus \{t\}$, so $u_0$ is the maximal element of $W_{J'}$ by
    \ref{detail2}, and consequently $J' \subseteq D_L(u)$. Since $t \in S(v)$,
    we must have $t \in D_L(u)$ and hence
    $u = \tilde{u}_k$.  Similarly if $u_0 = u_0'' u_0'$ where $u_0'' \in
    W_{J'}^K$ and $u_0' \in W_{K}$, then $t \not\in S(u_0'')$. Since $s_k \not\in
    S(u_1)$, $u = u_0'' (u_0' u_1)$ is a parabolic decomposition with respect to
    $J_{k-1}$, and it follows from Lemma \ref{L:blocking} that $S(u_0'')$ must
    pairwise commute with $S(v)$, so $w = u_0'' (v u_0' u_1)$ is a chain BP
    decomposition by \ref{detail3}.
\end{proof}

\subsection{Proof of Theorem \ref{T:exponents}}
We can now give a short account of the proof of Theorem
\ref{T:exponents}. If $w$ has a chain BP decomposition $w = vu$ or $w = uv$,
then $P_w(q) = [\ell(v)+1]_q P_u(q)$, and since $u$ is also rationally smooth,
we can proceed by induction. Thus we only need to check Theorem
\ref{T:exponents} for elements which do not have a chain BP decomposition.
If $w$ is the maximal element in $W_{S(w)}$, then as noted in the introduction
the conclusion of Theorem \ref{T:exponents} is well-known.  This leaves the
elements $w_{kl}$ and $w_{kl}^{-1}$ from Theorem \ref{T:chainbpexist}. The
inverse map gives an order-isomorphism $[e,w] \iso [e,w^{-1}]$, so $P_w(q)
= P_{w^{-1}}(q)$. Thus we only need to check the theorem for the elements
$w_{kl}$, $5 \leq l < k \leq 8$. Let $\tilde{w}_{l}$ be the maximal element of
$W_{S_l}$.  Then
\begin{equation*}
    P_{w_{kl}}(q) = P_{\tilde{v}_l}^{J_l}(q)\  P_{\tilde{u}_k}(q) = 
        P_{\tilde{w}_l}(q)\  P_{\tilde{u}_k}(q)\  P_{\tilde{u}_{l}}^{-1}(q),
\end{equation*}
where the last equality uses the fact that $\tilde{w}_l = \tilde{v}_l
\tilde{u}_{l}$. The elements $\tilde{u}_k$ are
maximal of type $D$, and the elements $\tilde{w}_l$ are maximal of
types $D$ ($l=5$) and $E$ ($l>5$). We can check that the exponents
of $\tilde{u}_l$ are, counting with multiplicity, contained in the
union of the exponents of $\tilde{w}_l$ and $\tilde{u}_k$, so that
$P_{w_{kl}}(q)$ is a product of $q$-integers. The exponents of the elements
$w_{kl}$ are given in Table \ref{TBL:exceptexp}.

\begin{table}[h]
    \begin{tabular}{ll}
        $w\quad \quad$ & Exponents of $w$ (zeroes omitted) \\
        $w_{65}$ & 1, 4, 4, 5, 7, 7 \\
        $w_{75}$ & 1, 4, 5, 5, 7, 7, 9 \\
        $w_{85}$ & 1, 4, 5, 6, 7, 7, 9, 11 \\
        $w_{76}$ & 1, 5, 5, 7, 8, 9, 11 \\
        $w_{86}$ & 1, 5, 6, 7, 8, 9, 11, 11 \\
        $w_{87}$ & 1, 6, 7, 9, 11, 11, 13, 17 \\
    \end{tabular}
    \caption{Table of exponents for elements $w_{kl}$.}
    \label{TBL:exceptexp}
\end{table}

It is not necessary to use Theorem \ref{T:chainbpexist} to prove Theorem
\ref{T:exponents}. When the Schubert variety $X(w)$ is smooth, Theorem
\ref{T:exponents} follows from a result of Akyildiz-Carrell \cite{AC12}, which
states that the exponents can be calculated from the torus weights $\Omega_w$
of the tangent space $T_e X(w)$ to $X(w)$ at the identity, analogously to how
the exponents of $W$ can be determined from the heights of the positive roots
of $R$. In the simply-laced types, all rationally smooth Schubert varieties are
smooth by a theorem of Peterson \cite{CK03}, so the Akyildiz-Carrell theorem
covers Theorem \ref{T:exponents}. The non-simply-laced types are covered 
by \cite{Bi98} (classical types) and \cite{OY10} (type $F_4$).

\section{The HLSS condition and $\nbc$-sets}\label{S:hultman}

In this section we give some background on the HLSS condition which is
necessary in the subsequent sections.  Given a reduced expression $s_1 \cdots
s_{\ell(w)}$ for an element $w \in W$, we can order the inversion set $I(w)$ by
$\beta_1 < \cdots < \beta_{\ell(w)}$, where $\beta_i = s_1 \cdots s_{i-1}
\alpha_{s_i}$. A total order on $I(w)$ constructed in this way is called a
\emph{convex order} (this is also called a \emph{reflection order}, see e.g.
\cite[Section 5.2]{BB05}). Let $2^{I(w)}$ denote the power set of $I(w)$.
Given a convex order, we can define a surjective map
\begin{equation*}
    \phi\ :\ 2^{I(w)} \arr [e,w] \ : \ \{\beta_1,\ldots,\beta_k\} \mapsto 
        t_{\beta_1} \cdots t_{\beta_k} w \text{ when } \beta_1 < \cdots < \beta_k.
\end{equation*}
\begin{thm}[Hultman-Linusson-Shareshian-Sj\"ostrand \cite{HLSS09}]\label{T:HLSS}
    Choose a convex order for $I(w)$, and let $\nbc(I(w))$ denote the set of
    $\nbc$-sets of $I(w)$ with respect to the chosen order. Then the
    restriction of $\phi$ to $\nbc(I(w))$ is injective.
\end{thm}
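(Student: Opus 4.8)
The plan is to argue by contradiction, extracting a broken circuit from a hypothetical collision. Identify each subset of $I(w)$ with the corresponding product of reflections, so that for $A = \{\gamma_1 \prec \cdots \prec \gamma_k\} \subseteq I(w)$ we have $\phi(A) = t_{\gamma_1}\cdots t_{\gamma_k} w$. Suppose $A \neq B$ are two $\nbc$-sets with $\phi(A) = \phi(B)$, and let $\beta$ be the largest root (in the convex order) lying in the symmetric difference $A \triangle B$; say $\beta \in A \setminus B$. Since $A$ and $B$ contain exactly the same roots $\succ \beta$, and those roots occupy the rightmost factors of both products (just to the left of $w$), we may cancel them together with $w$. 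Writing $X = \{\gamma \in A : \gamma \preceq \beta\}$ and $Y = \{\gamma \in B : \gamma \prec \beta\}$, the relation $\phi(A) = \phi(B)$ collapses to
\[
    \prod_{\gamma \in X} t_\gamma = \prod_{\gamma \in Y} t_\gamma,
\]
where both products are taken in increasing order, $\beta = \max X$, and $\beta \notin Y$. Thus $t_\beta$ equals a product of reflections $t_\gamma$ with every $\gamma \prec \beta$ and every $\gamma \in X \cup Y \subseteq A \cup B$.

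The next step is to convert this group relation into a linear dependence. A product $t_{\gamma_1}\cdots t_{\gamma_r}$ fixes the subspace $\bigcap_i \ker \gamma_i$ pointwise, whereas $t_\beta$ fixes exactly $\ker \beta$; hence $\bigcap_i \ker\gamma_i \subseteq \ker\beta$, which, after identifying $V$ with $V^*$, gives $\beta \in \vspan\{\gamma_1,\ldots,\gamma_r\}$. Consequently $\beta$ lies in the span of the roots of $X' \cup Y$, where $X' = X \setminus \{\beta\}$, all of which precede $\beta$. Choosing a minimal linearly dependent subset containing $\beta$ yields a circuit $C$ with $\beta = \max C$, so that $C \setminus \{\beta\}$ is a broken circuit. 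If this broken circuit were contained in $X' \subseteq A$ or in $Y \subseteq B$, we would contradict the hypothesis that $A$, respectively $B$, is an $\nbc$-set, and the proof would be complete.

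The main obstacle is precisely that the circuit $C$ may straddle $A$ and $B$: the linear relation only places $\beta$ in $\vspan(X' \cup Y)$, not in $\vspan X'$ or $\vspan Y$ separately, and a generic (mixed-sign) dependence need not localize to a single $\nbc$-set. This is where the special geometry of inversion arrangements must enter, through the convexity property of reflection orders (\cite[Section 5.2]{BB05}): each initial segment $\{\gamma \preceq \beta_i\}$ of the convex order is the inversion set $I(s_1\cdots s_i)$, hence a biconvex set, and any positive root that is a positive combination of two others lies strictly between them in the order. I expect to combine this with an induction on $|A \cup B|$ (or on the position of $\beta$), using the relation $\prod_{\gamma \in X} t_\gamma = \prod_{\gamma \in Y} t_\gamma$ itself, and not merely its linear shadow, to perform an exchange that either forces the broken circuit inside one of the two sets or produces a strictly smaller collision of $\nbc$-sets, contradicting minimality. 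Controlling this exchange, rather than the cancellation and linear-algebra steps, is the crux of the argument.
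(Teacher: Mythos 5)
The paper does not actually prove Theorem \ref{T:HLSS}; it is imported verbatim from \cite{HLSS09}, so there is no internal argument to compare yours against. Judged on its own terms, your proposal is not a proof: it is a reduction to a step that you yourself label ``the crux'' and leave open, and that step is where essentially all of the content of the theorem lives.

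Concretely: the cancellation of the common tail and of $w$ is fine, and the passage from the group identity $\prod_{\gamma \in X} t_\gamma = \prod_{\gamma \in Y} t_\gamma$ to the linear dependence $\beta \in \vspan(X' \cup Y)$ is correct (it is exactly Theorem \ref{T:carter}(b),(c), already quoted in the paper). But the circuit you extract has its broken circuit contained only in $X' \cup Y$, which straddles $A$ and $B$, so neither hypothesis ``$A$ is an $\nbc$-set'' nor ``$B$ is an $\nbc$-set'' is contradicted. The proposed fix --- an exchange argument plus induction that localizes the dependence to one of the two sets or produces a smaller collision --- is only described, not carried out, and there is no indication of how the exchange would interact with the $\nbc$ condition. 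A telling symptom that the gap is not a routine detail: every step you have actually written is valid for an \emph{arbitrary} total order on $I(w)$, whereas the theorem requires the order to be convex (and the conclusion can fail for non-convex orders, since $\phi$ itself changes with the order while $|\nbc(I(w))|$ does not). So the convexity hypothesis, which is the real input, has not yet been used anywhere; whatever argument closes the gap must invoke it in an essential way, for instance via the characterization of convex orders recalled in Section \ref{S:flattening} or via Dyer-type uniqueness of increasing paths in the Bruhat graph. As it stands, the proposal is an honest and correctly set-up reduction, but the theorem remains unproved.
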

In particular the number of $\nbc$-sets of $I(w)$ is less than the size of the
Bruhat interval $[e,w]$. The restriction of $\phi$ to $\nbc(I(w))$ will be
surjective if and only if the number of $\nbc$-sets is equal to the size of the
Bruhat interval. Since the size of $\nbc(I(w))$ is independent of the choice of
convex order, if there is some convex order for which the restriction of $\phi$
to $\nbc(I(w))$ is surjective, then this happens for all convex orders. 

A theorem of Hultman-Linusson-Shareshian-Sj\"ostrand (type $A$) and Hultman
(all finite Coxeter groups) characterizes when the restriction is surjective.
Recall that $\ell'(w)$ is the absolute length of $w$, and $\al(u,w)$ is the
distance from $u$ to $w$ in the directed Bruhat graph. The distance from $u$ to
$w$ in the undirected Bruhat graph is simply $\ell'(u w^{-1})$, so $\al(u,w)
\geq \ell'(u w^{-1})$. 
\begin{thm}[\cite{HLSS09} \cite{Hu11}]\label{T:hultman1}
    The restriction of $\phi$ to $\nbc(I(w))$ is surjective (and hence
    bijective) if and only if $\al(u,w) = \ell'(u w^{-1})$ for all $u \leq w$.
\end{thm}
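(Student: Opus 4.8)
The plan is to combine the injectivity already established in Theorem \ref{T:HLSS} with an explicit description of the fibers of $\phi$. Fix the reduced word $s_1 \cdots s_{\ell(w)}$ defining the convex order. A direct computation shows that $t_{\beta_i} w = s_1 \cdots \widehat{s_i} \cdots s_{\ell(w)}$ is obtained from this word by deleting the letter $s_i$, so that for $T = \{\beta_{i_1} < \cdots < \beta_{i_k}\}$ the element $\phi(T)$ is the subword with the letters in positions $i_1, \ldots, i_k$ removed, and $\phi(T) w^{-1} = t_{\beta_{i_1}} \cdots t_{\beta_{i_k}}$. The first point I would record is that every $\nbc$-set $T$ is linearly independent in $V$: any dependent subset contains a circuit $C$, and then $C$ minus its largest element is a broken circuit, so an $\nbc$-set contains no circuit. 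Since the reflection length of a product of reflections in linearly independent roots equals the number of factors (reflection length being the dimension of the moved subspace), it follows that $\ell'(\phi(T) w^{-1}) = |T|$ for every $\nbc$-set $T$. Because $\phi$ is surjective onto $[e,w]$ while $\phi|_{\nbc(I(w))}$ is injective, the restriction is bijective if and only if $|\nbc(I(w))| = |[e,w]|$.

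The heart of the argument is then to identify the image of $\phi|_{\nbc(I(w))}$ as exactly the set of elements realizing the bound $\al(u,w) \geq \ell'(uw^{-1})$:
\begin{equation*}
    \im\left(\phi|_{\nbc(I(w))}\right) = \{u \leq w : \al(u,w) = \ell'(uw^{-1})\}.
\end{equation*}
For the inclusion $\subseteq$, given an $\nbc$-set $T$ with $\phi(T) = u$, I would re-insert the deleted letters one at a time in the order dictated by the convex order, producing a chain $u = x_1, x_2, \ldots, x_{k+1} = w$ with $x_{j+1} = t_{\beta_{i_j}} x_j$. Using that every tail $\{\beta_{i_j}, \ldots, \beta_{i_k}\}$ of $T$ is again independent (so $\ell'(x_j w^{-1})$ drops by exactly one at each step), one checks that each step strictly increases length, so this is a directed path of length $|T| = \ell'(uw^{-1})$ in the Bruhat graph; hence $\al(u,w) \leq |T| = \ell'(uw^{-1}) \leq \al(u,w)$. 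For the reverse inclusion, given $u$ with $\al(u,w) = \ell'(uw^{-1})$, I would take a shortest directed path from $u$ to $w$, read off its reflections, and show that the corresponding roots lie in $I(w)$ and, after selecting the lexicographically least such path relative to the convex order, form an $\nbc$-set whose image under $\phi$ is $u$.

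Combining these, $\phi|_{\nbc(I(w))}$ is surjective if and only if $\{u \leq w : \al(u,w) = \ell'(uw^{-1})\} = [e,w]$, which is precisely the asserted condition that $\al(u,w) = \ell'(uw^{-1})$ for all $u \leq w$. The main obstacle is the set equality of the second step: both inclusions require translating between the linear-algebraic broken-circuit condition on subsets of $I(w)$ and the metric structure of the Bruhat graph. In the forward inclusion the delicate point is verifying that re-inserting the letters of an $\nbc$-set strictly increases length at each stage, so that one genuinely obtains a directed path rather than merely an undirected one; in the reverse inclusion the delicate point is arranging a shortest directed path whose reflection labels avoid broken circuits in the chosen convex order. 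Both rest on the theory of reflection (convex) orders and the compatibility of reflection-length factorizations with such orders, and this is where the real work lies.
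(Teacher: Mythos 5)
First, a caveat: the paper does not prove Theorem \ref{T:hultman1}; it is quoted from \cite{HLSS09} and \cite{Hu11}, so there is no internal argument to compare against and your proposal must stand on its own.

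Your reduction is the natural one and matches the published arguments in outline: granted injectivity (Theorem \ref{T:HLSS}) and Carter's lemma (Theorem \ref{T:carter}), everything comes down to the identity $\im\bigl(\phi|_{\nbc(I(w))}\bigr) = \{u \leq w : \al(u,w) = \ell'(uw^{-1})\}$. The trouble is that both inclusions of this identity are precisely the content of the cited theorem, and the one justification you offer for the forward inclusion does not work. You argue that since every tail $\{\beta_{i_j},\ldots,\beta_{i_k}\}$ of an $\nbc$-set is linearly independent, the reflection length $\ell'(x_j w^{-1})$ drops by exactly one at each step, and hence each step $x_{j+1} = t_{\beta_{i_j}} x_j$ increases Coxeter length. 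But a drop in reflection length toward $w$ only says that $x_j$ and $x_{j+1}$ are joined by an edge of the \emph{undirected} Bruhat graph, which is true by construction; it says nothing about the direction of that edge. Concretely, in $A_2$ take $w = s_1 s_2 s_1$ with convex order $\alpha_1 < \alpha_1 + \alpha_2 < \alpha_2$ and the broken circuit $T = \{\alpha_1, \alpha_1+\alpha_2\}$: every tail of $T$ is linearly independent and $\ell'(x_j w^{-1})$ drops by one at each step, yet the resulting chain is $s_1,\ e,\ w$, which is not length-increasing. So the property you invoke holds equally for broken circuits, where the conclusion fails; the $\nbc$ condition and the convexity of the order must enter in an essential way, and that is exactly the lemma you have not supplied.

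The reverse inclusion has a parallel gap. The reflections labelling a shortest directed path from $u$ to $w$ need not correspond to roots in $I(w)$: in $A_2$ the path $e \to s_2 \to s_1 s_2$ is a shortest directed path (its length $2$ equals $\ell'\bigl((s_1s_2)^{-1}\bigr)$), but its first edge is labelled by $t_{\alpha_2}$ and $\alpha_2 \notin I(s_1 s_2)$. So "read off its reflections and show that the corresponding roots lie in $I(w)$" is false for an arbitrary shortest path, a particular path must be chosen, and your "lexicographically least relative to the convex order" prescription is not even defined for reflections whose roots lie outside $I(w)$. As written, the proposal is a correct statement of what must be proved, together with the easy bookkeeping around it, rather than a proof.
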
 

\begin{defn}
    We say that $w$ satisfies the \emph{HLSS condition} if $\al(u,w) =
    \ell'(u w^{-1})$ for all $u \leq w$, or equivalently if the restriction of
    $\phi$ to $\nbc(I(w))$ is surjective.
\end{defn}
The HLSS condition is weaker than being rationally smooth:
\begin{thm}[\cite{Hu11}]\label{T:hultman2}
    The HLSS condition is satisfied if $w$ is rationally smooth.
\end{thm}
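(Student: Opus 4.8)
The plan is to use the reformulation in Theorem \ref{T:hultman1}: since $\al(u,w)\geq\ell'(uw^{-1})$ always holds, it suffices to produce, for each $x\leq w$, a directed path from $x$ to $w$ in the Bruhat graph of $[e,w]$ of length exactly $\ell'(xw^{-1})$. I would argue by induction on $\ell(w)$, the cases $\ell(w)\leq 1$ being immediate. For the inductive step, Theorem \ref{T:bpexist} furnishes a BP decomposition of $w$ with respect to $J=S\setminus\{s\}$. Inversion induces an isomorphism of directed Bruhat graphs $[e,w]\to[e,w^{-1}]$ sending $x\mapsto x^{-1}$: an edge $a\to ta$ becomes $a^{-1}\to(ta)^{-1}=(a^{-1}ta)a^{-1}$, again a reflection edge with increasing length. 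Since $xw^{-1}$ and the element $x^{-1}w$ corresponding to it have equal absolute length (reflection length is invariant under inversion and conjugation), I may assume $w=uv$ is a \emph{left} BP decomposition, with $u\in W_J$ rationally smooth and $v\in{}^J W$.

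Given $x\leq w$, write $x=u_1v_1$ with $u_1\in[e,u]$ and $v_1\in[e,v]\cap{}^J W$ via the bijection \eqref{E:bpmap}, and build the path in two stages. In the first stage I raise the parabolic factor: the inductive hypothesis applied to $u$ inside $W_J$ (where the Bruhat graph and absolute length agree with those of $W$, as $W_J$ is a standard parabolic) yields a directed path $u_1=a_0\to a_1\to\cdots\to a_m=u$ of length $m=\ell'(u_1u^{-1})$. Because a directed Bruhat edge always increases Bruhat order, every $a_i\leq u$; and since $\ell(a_iv_1)=\ell(a_i)+\ell(v_1)$ for $a_i\in W_J$ and $v_1\in{}^J W$, right multiplication by $v_1$ lifts this to a directed path $u_1v_1\to\cdots\to uv_1$ inside $[e,w]$ of the same length $\ell'(u_1u^{-1})$.

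The second stage must raise the quotient factor from $uv_1$ to $uv=w$, and this is where I expect the principal obstacle to lie. Two things must be established at once: an additivity of absolute length, namely $\ell'(xw^{-1})=\ell'(u_1u^{-1})+\ell'(v_1v^{-1})$, which I would approach by conjugating $xw^{-1}=u_1v_1v^{-1}u^{-1}$ by $u$ to $(u^{-1}u_1)(v_1v^{-1})$ and analyzing how reflection length splits across the parabolic factorization; and a parabolic analogue of the HLSS condition producing a directed path from $uv_1$ to $w$ of length exactly $\ell'(v_1v^{-1})$. When $w$ admits a chain BP decomposition, $[e,v]\cap{}^J W$ is a chain, which organizes the quotient factor and is the case I would treat first; even here the subtlety is that stepping along covers of the chain gives a path of length $\ell(v)-\ell(v_1)$, which may exceed $\ell'(v_1v^{-1})$, so genuine reflection ``jumps'' in the quotient are needed to achieve minimality. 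Finally, the rationally smooth elements with no chain BP decomposition are exactly the finite list classified in Theorem \ref{T:chainbpexist} (maximal elements of parabolics of type $D$, $E$, and $F_4$, together with the elements $w_{kl}^{\pm 1}$), and for these I would verify the HLSS condition directly. The quotient step of stage two, with its accompanying additivity of absolute length, is the crux of the argument.
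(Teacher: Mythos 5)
The paper does not actually prove this statement: it is imported verbatim from Hultman \cite{Hu11}, so there is no in-paper argument to compare yours against, and your proposal has to stand on its own. It does not yet do so. Your first stage is sound: reducing to a left BP decomposition via the inversion isomorphism of Bruhat graphs, applying induction to the rationally smooth factor $u\in W_J$ (where reflection length and the Bruhat graph agree with those of $W$ by Theorem \ref{T:carter}), and translating the resulting path by $v_1$ using length-additivity of parabolic factorizations are all correct. But everything you correctly identify as ``the crux'' is exactly the content of the theorem, and it is left unproved. Two independent claims are missing. First, the superadditivity $\ell'(xw^{-1})\geq \ell'(u_1u^{-1})+\ell'(v_1v^{-1})$: after conjugating to $(u^{-1}u_1)(v_1v^{-1})$, Carter's theorem only gives the opposite inequality $\codim\mathrm{Fix}(ab)\leq\codim\mathrm{Fix}(a)+\codim\mathrm{Fix}(b)$ in general (take $b=a^{-1}$ to see equality can fail badly), so you need a genuine geometric input showing the fixed spaces of the two factors are in general position relative to $V_J$; nothing in your sketch supplies it. Second, the existence of a directed path from $uv_1$ to $uv$ of length exactly $\ell'(v_1v^{-1})$ is a quotient-level instance of the HLSS condition itself, not something the induction hands you, and as you note the naive chain path has length $\ell(v)-\ell(v_1)$, which is generally too long. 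Without both pieces your two-stage path only gives $\al(x,w)\leq\ell'(u_1u^{-1})+(\text{stage-2 length})$, which does not close against the lower bound $\al(x,w)\geq\ell'(xw^{-1})$.

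So the proposal is a plausible plan with the hard part flagged but not executed; as written it is not a proof. If you want to complete it along these lines, the place to start is making the stage-two statement precise as an inductive hypothesis about the quotient $[e,v]\cap{}^J W$ (or about $\fl$ to a reflection subgroup) strong enough to propagate through Theorem \ref{T:bpexist}, and proving the fixed-space transversality needed for the additivity of $\ell'$; alternatively, consult \cite{Hu11} for the actual argument, which does not proceed by BP decompositions.
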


The following theorem of Carter, used by Hultman in the proof of Theorem \ref{T:hultman1},
implies that if $\{\beta_{j_1},\ldots,\beta_{j_k}\} \subset I(w)$ is linearly independent, then
$\ell'\left(\phi(\beta_{j_1}, \ldots, \beta_{j_k}) w^{-1} \right) = k$.
\begin{thm}[\cite{Ca72}]\label{T:carter}
    Suppose $w = t_{\beta_1} \cdots t_{\beta_m}$, where $t_{\beta_i}$ refers to 
    reflection through the root $\beta_i$. Then 
    \begin{enumerate}[(a)]
        \item $\ell'(w) = m$ if and only if $\{\beta_1,\ldots,\beta_m\}$ is 
            linearly independent.
        \item The fixed point space of $w$ on $V$ contains the orthogonal
            complement of $\vspan\{\beta_1,\ldots,\beta_m\}$. 
        \item The codimension of the fixed point space of $w$ is equal to
            $\ell'(w)$, so if $\ell'(w) = m$ then the fixed point space of $w$
            is equal to the orthogonal complement of $\vspan\{\beta_1,\ldots,\beta_m\}$. 
    \end{enumerate}
\end{thm}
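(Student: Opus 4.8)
The plan is to treat the geometric claim (b) and the ``linearly independent'' directions of (a) and (c) by an elementary orthogonal-geometry induction, and to isolate the genuinely combinatorial content---the reduction of a product whose roots are dependent---as a single lemma, which is where the real work lies. Write $U = \vspan\{\beta_1,\ldots,\beta_m\}$. Each $t_{\beta_i}$ is orthogonal and fixes $\beta_i^\perp \supseteq U^\perp$ pointwise, so $w$ fixes $U^\perp$ pointwise; this is (b). Since $w$ is orthogonal, its fixed space $\ker(1-w)$ and move space $\im(1-w)$ are orthogonal complements, and (b) reads $U^\perp \subseteq \ker(1-w)$, i.e. $\im(1-w) \subseteq U$. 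Thus $\codim \ker(1-w) = \dim \im(1-w) \le \dim U \le m$, with $\dim U = m$ exactly when the $\beta_i$ are independent.

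First I would prove the key linear-algebra lemma: \emph{if $\{\beta_1,\ldots,\beta_m\}$ is linearly independent, then $\ker(1-w) = U^\perp$ exactly.} I argue by induction on $m$, writing $w = t_{\beta_1} w'$ with $w' = t_{\beta_2}\cdots t_{\beta_m}$ and $U' = \vspan\{\beta_2,\ldots,\beta_m\}$; inductively $\ker(1-w') = U'^\perp$, hence $\im(1-w') = U'$. If $wx = x$ then $w'x = t_{\beta_1}x = x - c\beta_1$ with $c = 2(x,\beta_1)/(\beta_1,\beta_1)$, so $(w'-1)x = -c\beta_1 \in U'$; since $\beta_1 \notin U'$ this forces $c = 0$, whence $x \in \ker(1-w') \cap \beta_1^\perp = U^\perp$. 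Together with (b) this gives $\ker(1-w) = U^\perp$, so in the independent case $\codim \ker(1-w) = \dim\im(1-w) = m$.

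The crux is the \emph{reduction lemma}: if $\{\beta_1,\ldots,\beta_m\}$ is dependent, then $w$ is a product of $m-2$ reflections through roots of $R$. I would argue by induction on $m$. Choosing $j$ minimal with $\beta_j \in \vspan\{\beta_1,\ldots,\beta_{j-1}\} = \im(1-v)$, where $v = t_{\beta_1}\cdots t_{\beta_{j-1}}$ (the previous lemma identifies this move space), the prefix $t_{\beta_1}\cdots t_{\beta_j}$ is a dependent product of $j$ reflections; if $j < m$, induction rewrites it with $j-2$ reflections and we are done. The remaining case, in which the whole set is minimally dependent, is handled using the conjugation identity $v t_\beta v^{-1} = t_{v\beta}$---which keeps us among the reflections of $W$ precisely because $R$ is stable under $W$---together with the reflection bookkeeping $\dim\im(1 - t_\beta g) = \dim\im(1-g) - 1$ when $\beta \in \im(1-g)$, and the fact that an element of $W$ with one-dimensional move space is a reflection $t_\gamma$ with $\gamma \in R$. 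I expect this last step---carrying out the cancellation \emph{within} the reflections of $W$ rather than in $O(V)$---to be the main obstacle, and it is the combinatorial heart of Carter's argument.

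The three parts then assemble formally. For (a): a reduced expression cannot have dependent roots, since the reduction lemma would shorten it, so $\ell'(w) = m$ implies independence; conversely, if the $\beta_i$ are independent then the lemma of the second paragraph gives $\dim\im(1-w) = m$, and (c) below gives $\ell'(w) = \dim\im(1-w) = m$. For (c): taking a reduced expression $w = t_{\gamma_1}\cdots t_{\gamma_{\ell'(w)}}$, its roots are independent by (a), so the same lemma yields $\codim\ker(1-w) = \ell'(w)$ and $\ker(1-w) = \vspan\{\gamma_1,\ldots,\gamma_{\ell'(w)}\}^\perp$; combined with (b), when $\ell'(w) = m$ this is exactly the orthogonal complement of $U$, completing (c).
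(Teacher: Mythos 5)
The paper offers no proof of this statement---it is quoted verbatim from Carter \cite{Ca72}---so your proposal can only be judged on its own terms. The parts you carry out in full are correct: the observation giving (b); the inductive lemma that $\ker(1-w)=\vspan\{\beta_1,\ldots,\beta_m\}^{\perp}$ when the roots are independent; the bookkeeping identity $\dim\im(1-t_{\beta}g)=\dim\im(1-g)-1$ for $\beta\in\im(1-g)$ (which follows from $\im(1-t_{\beta}g)\subseteq\im(1-g)+\R\beta$ together with the parity of $\dim\im(1-h)$ against $\det h$); and the final assembly of (a) and (c), which despite appearances is not circular, since the reduction lemma gives the ``only if'' of (a), which gives (c), which gives the ``if'' of (a).

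The gap is exactly where you predicted, but it is larger than your sketch acknowledges. After disposing of the case where a proper prefix (or suffix) of the roots is already dependent, you are left with $\beta_1,\ldots,\beta_{m-1}$ independent and $\beta_m$ in their span, so that $w$ has move space of dimension $m-2$. For $m=3$ your toolkit does close the argument: $w$ then has a one-dimensional move space and determinant $-1$, hence is a reflection of $W$ and so equals $t_{\gamma}$ for some $\gamma\in R$. But for $m\geq 4$ (e.g.\ the circuit $e_1-e_2,\ e_2-e_3,\ e_3-e_4,\ e_1-e_4$ in $A_3$, every three of which are independent) there is no consecutive dependent triple to collapse, and the conjugation identity alone does not manufacture one; a genuinely new input is required. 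Carter's own route is a chain of shifts $t_{\alpha}t_{\beta}=t_{t_{\alpha}\beta}\,t_{\alpha}$ engineered so that two adjacent roots become proportional and their reflections cancel. An alternative is to prove that every $w\neq e$ in $W$ has a \emph{root} in $\im(1-w)$ (this is where the reflection-group structure really enters, e.g.\ via Steinberg's theorem that the pointwise stabilizer of $V^{w}$ is generated by the reflections it contains); combined with your bookkeeping identity this peels off one reflection per unit of $\dim\im(1-w)$ and gives $\ell'(w)\leq\dim\im(1-w)$ directly, from which the reduction lemma and (c) follow. As written, neither ingredient is supplied, so the ``only if'' direction of (a) and all of (c) remain unproved.
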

In particular, Theorem \ref{T:carter} (a) applies if $\{\beta_1,\ldots,\beta_m\}$ is
an $\nbc$-set, since $\nbc$-sets are always linearly independent.  We will use
Theorem \ref{T:carter} later in Section \ref{S:flattening}.  To finish the
section, note that Equation (\ref{E:HLSS}) from the introduction follows
immediately from the results outlined in this section:
\begin{cor}\cite{HLSS09}
    If $w$ satisfies the HLSS condition then
    \begin{equation*}
        Q_{\mcI(w)}(q) = \sum_{u \in [e,w]} q^{\ell'(uw^{-1})} = \sum_{u \in [e,w]} q^{\al(u,w)}.
    \end{equation*}
\end{cor}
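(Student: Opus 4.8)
The plan is to assemble the corollary directly from the three results just stated, so the proof should be short. The target is the identity
\begin{equation*}
    Q_{\mcI(w)}(q) = \sum_{u \in [e,w]} q^{\ell'(uw^{-1})} = \sum_{u \in [e,w]} q^{\al(u,w)}
\end{equation*}
under the assumption that $w$ satisfies the HLSS condition.

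First I would recall that $Q_{\mcI(w)}(q) = \sum_i q^i H^i(\mcM(\mcI(w)))$, and that by the discussion in Section \ref{S:background} the coefficient of $q^i$ counts the $\nbc$-sets of $I(w)$ of size $i$ with respect to any fixed convex order. Hence $Q_{\mcI(w)}(q) = \sum_{S \in \nbc(I(w))} q^{|S|}$, where $\nbc(I(w))$ is as in Theorem \ref{T:HLSS}. The strategy is to transport this generating function across the map $\phi$ restricted to $\nbc(I(w))$, using the HLSS condition to know that this restriction is a bijection onto $[e,w]$, and using Carter's theorem to control how the size of an $\nbc$-set records the absolute length of $\phi$ applied to it.

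The key step is to track the statistic under $\phi$. By definition of the HLSS condition (Theorem \ref{T:hultman1}), the restriction of $\phi$ to $\nbc(I(w))$ is a bijection onto $[e,w]$. If $S = \{\beta_{j_1},\ldots,\beta_{j_k}\}$ is an $\nbc$-set then it is linearly independent, so by Theorem \ref{T:carter}(a) applied to the expression of $\phi(S) w^{-1} = t_{\beta_{j_1}} \cdots t_{\beta_{j_k}}$ as a product of $k$ reflections, we get $\ell'(\phi(S) w^{-1}) = k = |S|$, exactly as noted in the sentence preceding Theorem \ref{T:carter}. Therefore, writing $u = \phi(S)$, we have $q^{|S|} = q^{\ell'(uw^{-1})}$, and summing over the bijection yields
\begin{equation*}
    Q_{\mcI(w)}(q) = \sum_{S \in \nbc(I(w))} q^{|S|} = \sum_{u \in [e,w]} q^{\ell'(uw^{-1})}.
\end{equation*}
This establishes the first equality. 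For the second equality I would invoke the HLSS condition in its first formulation, namely $\al(u,w) = \ell'(uw^{-1})$ for all $u \leq w$, which lets me replace the exponent $\ell'(uw^{-1})$ by $\al(u,w)$ term-by-term in the sum over $[e,w]$.

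There is no real obstacle here, since every ingredient is supplied by the earlier results; the only point requiring care is bookkeeping. One must confirm that the two characterizations of the HLSS condition are used consistently: the bijectivity of $\phi|_{\nbc(I(w))}$ gives the index set for the first sum, while the pointwise equality $\al(u,w) = \ell'(uw^{-1})$ gives the second. Both are guaranteed simultaneously by Theorem \ref{T:hultman1}, so the corollary follows at once, which is why the excerpt remarks that Equation (\ref{E:HLSS}) "follows immediately."
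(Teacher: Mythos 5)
Your proposal is correct and is exactly the argument the paper intends when it says the corollary ``follows immediately from the results outlined in this section'': you use the bijectivity of $\phi$ restricted to $\nbc(I(w))$ (Theorems \ref{T:HLSS} and \ref{T:hultman1}), Carter's theorem to identify the size of an $\nbc$-set with the absolute length $\ell'(\phi(S)w^{-1})$, and the pointwise equality $\al(u,w) = \ell'(uw^{-1})$ from the HLSS condition for the second equality. No gaps; the bookkeeping is handled correctly.
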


\section{Chain BP decompositions and modular flats}\label{S:modular}

In this section, we prove Theorem \ref{T:bpmodular}, and use it to prove
one direction of Theorem \ref{T:main}.  To determine when $X$ is a coatom of
$\mcI(w)$ we use the following lemma.  Recall that $V$ is the ambient Euclidean
space containing $R$. Given $J \subset S$, let $V_J \subset V$ be the subspace
spanned by $\{\alpha_s : s \in J\}$, and let $R_J = R \cap V_J$ be the root
system for $W_J$. 
\begin{lemma}\label{L:invspan}
    The linear span of the inversion set $I(w)$ in $V$ is $V_{S(w)}$, and the
    center of the inversion hyperplane arrangement $\mcI(w)$ is the orthogonal
    complement of $V_{S(w)}$.
\end{lemma}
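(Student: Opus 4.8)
The plan is to compute the linear span of $I(w)$ directly from a reduced word, and then read off the center of $\mcI(w)$ as an orthogonal complement. Fix a reduced expression $w = s_1 \cdots s_n$ with $n = \ell(w)$, and recall from Section~\ref{S:hultman} that $I(w) = \{\beta_1, \ldots, \beta_n\}$, where $\beta_i = s_1 \cdots s_{i-1} \alpha_{s_i}$. The key observation is that applying a simple reflection to a vector changes it only by a multiple of the corresponding simple root, so $\beta_i = \alpha_{s_i} + (\text{a linear combination of } \alpha_{s_1}, \ldots, \alpha_{s_{i-1}})$. Since every $s_j$ occurring in the word lies in $S(w)$, this already gives $\beta_i \in V_{S(w)}$ for all $i$, and hence $\vspan I(w) \subseteq V_{S(w)}$.

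For the reverse inclusion I would single out, for each distinct simple reflection $t \in S(w)$, the inversion $\beta_{i(t)}$, where $i(t)$ is the index of the first occurrence of $t$ in $s_1 \cdots s_n$. List the elements of $S(w)$ as $t^{(1)}, \ldots, t^{(m)}$ in order of first appearance, where $m = |S(w)|$. The only simple reflections occurring before position $i(t^{(k)})$ are exactly $t^{(1)}, \ldots, t^{(k-1)}$; in particular $t^{(k)}$ itself does not occur earlier, so its coefficient is left untouched and the expansion above specializes to $\beta_{i(t^{(k)})} = \alpha_{t^{(k)}} + (\text{a combination of } \alpha_{t^{(1)}}, \ldots, \alpha_{t^{(k-1)}})$. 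Written in the basis $\{\alpha_{t^{(1)}}, \ldots, \alpha_{t^{(m)}}\}$ of $V_{S(w)}$ (the simple roots being linearly independent, so $\dim V_{S(w)} = m$), these $m$ vectors form a unitriangular system and are therefore linearly independent. Hence they span the $m$-dimensional space $V_{S(w)}$, giving $V_{S(w)} \subseteq \vspan I(w)$ and so $\vspan I(w) = V_{S(w)}$.

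For the statement about the center, recall that $V$ and $V^*$ are identified via the Euclidean form, so $\ker \alpha = \{\beta \in V : (\beta, \alpha) = 0\}$ and the center of $\mcI(w)$ is $\bigcap_{\alpha \in I(w)} \ker \alpha = \{\beta \in V : (\beta, \alpha) = 0 \text{ for all } \alpha \in I(w)\}$. A vector is orthogonal to every element of $I(w)$ if and only if it is orthogonal to their linear span, so this intersection equals $(\vspan I(w))^{\perp} = (V_{S(w)})^{\perp}$ by the first part. I expect the spanning inclusion $V_{S(w)} \subseteq \vspan I(w)$ to be the only real content; the unitriangularity of the first-occurrence inversions handles it cleanly, and the center computation is then purely formal.
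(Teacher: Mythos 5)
Your proof is correct; the paper proves the lemma by a short induction on $\ell(w)$, peeling off a left descent $s$ and using $I(w) = \{\alpha_s\} \cup sI(sw)$, which rests on exactly the same mechanism as your argument (each inversion $\beta_i$ equals $\alpha_{s_i}$ plus a combination of earlier simple roots). Your unrolled version with the unitriangular family of first-occurrence inversions is just a more explicit rendering of the same idea, so this is essentially the paper's approach.
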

\begin{proof}
    The proof is by induction on $\ell(w)$. The lemma is clearly true for
    the identity. Given $w \neq e$, choose $s \in D_L(w)$. Then $I(w) =
    \{\alpha_s\} \cup s I(sw)$, so the span of $I(w)$ is 
    \begin{equation*}
        \R \alpha_s + \vspan I(sw) = \R \alpha_s + V_{S(sw)} = V_{S(w)}.
    \end{equation*}
    The corresponding statement for $\mcI(w)$ follows immediately.
\end{proof}
Lemma \ref{L:invspan} implies that the rank of $\mcI(w)$ is the size of the
support set $S(w)$.  Recall that a coatom of $L(\mcI(w))$ is an element of
$L(\mcI(w))$ of rank $|S(w)|-1$, or in other words is an $(l -|S(w)| +
1)$-dimensional subspace of $V$ which can be written as an intersection of
hyperplanes in $\mcI(w)$. The flat $X = \bigcap \mcI(u)$ has rank $|S(u)|$,
and hence $X$ will be a coatom if and only if $|S(u)| = |S(w)|-1$.

To prove Theorem \ref{T:bpmodular} we also need the following standard
characterization of modular coatoms (see for instance \cite{RR13}).
\begin{lemma}\label{L:modular}
    Let $\mcA$ be an arrangement, and let $\{\alpha_H\}$ be a set of defining
    forms for $\mcA$. Let $X \in L(\mcA)$ be a coatom. Then $X$ is modular if
    and only if for every distinct pair $H_1, H_2 \not\in \mcA_X$, there is
    $H_3 \in \mcA_X$ such that $\alpha_{H_1}, \alpha_{H_2}, \alpha_{H_3}$ are
    linearly dependent.
\end{lemma}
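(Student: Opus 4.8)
The plan is to translate the lattice-theoretic modularity of the coatom $X$ into a purely linear-algebra statement about the defining forms, and then check that statement directly. For a flat $F$ write $U_F = \vspan\{\alpha_H : H \in \mcA_F\} = F^{\perp}$, so that $\operatorname{rk}(F) = \codim F = \dim U_F$. Two preliminary observations drive everything. First, a hyperplane $H$ contains the meet $X \wedge Y$ (the smallest flat containing both $X$ and $Y$) if and only if it contains both $X$ and $Y$, so $\mcA_{X \wedge Y} = \mcA_X \cap \mcA_Y$ and hence $\operatorname{rk}(X \wedge Y) = \dim \vspan\{\alpha_H : H \in \mcA_X \cap \mcA_Y\}$. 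Second, since $X$ is a coatom, $U_X$ is a hyperplane in the total form space $U = \vspan\{\alpha_H : H \in \mcA\}$; letting $\pi : U \to U/U_X$ be the quotient onto a one-dimensional space, we have $\pi(\alpha_H)=0$ exactly when $H \in \mcA_X$, and for any flat $Y \not\le X$ the restriction $\pi|_{U_Y}$ is nonzero, whence $\dim(U_X \cap U_Y) = \operatorname{rk}(Y) - 1$.

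First I would use these to reformulate modularity. Because $X$ is a coatom, $X \vee Y = \hat{1}$ whenever $Y \not\le X$, so the modular identity $\operatorname{rk}(X) + \operatorname{rk}(Y) = \operatorname{rk}(X \vee Y) + \operatorname{rk}(X \wedge Y)$ collapses to $\operatorname{rk}(X \wedge Y) = \operatorname{rk}(Y) - 1$. Comparing with the two observations, this holds precisely when the forms $\{\alpha_H : H \in \mcA_X \cap \mcA_Y\}$ span the $(\operatorname{rk}(Y)-1)$-dimensional space $U_X \cap U_Y$ (the inclusion $\vspan\{\alpha_H : H \in \mcA_X \cap \mcA_Y\} \subseteq U_X \cap U_Y$ being automatic). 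Thus $X$ is modular if and only if $U_X \cap U_Y = \vspan\{\alpha_H : H \in \mcA_X \cap \mcA_Y\}$ for every flat $Y \not\le X$; this is the central reformulation, and the remaining task is to match it against the pairwise condition.

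The forward direction is then immediate: given modularity, apply the reformulation to the rank-two flat $Y = H_1 \vee H_2$ for distinct $H_1, H_2 \notin \mcA_X$. Here $U_Y = \vspan(\alpha_{H_1}, \alpha_{H_2})$ and $Y \not\le X$, so $U_X \cap U_Y$ is a line spanned by some $\alpha_{H_3}$ with $H_3 \in \mcA_X \cap \mcA_Y$; the relation $\alpha_{H_3} \in \vspan(\alpha_{H_1}, \alpha_{H_2})$ is exactly the asserted linear dependence. The reverse direction is the substantive step, and I would prove it for an arbitrary $Y \not\le X$ by a quotient argument rather than an induction on rank. Set $K = U_X \cap U_Y$ and $V_P = \vspan\{\alpha_H : H \in \mcA_X \cap \mcA_Y\} \subseteq K$; I must show $V_P = K$. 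Partition $\mcA_Y$ into $P = \mcA_Y \cap \mcA_X$ and $N = \mcA_Y \setminus \mcA_X$ (with $N \neq \emptyset$ since $Y \not\le X$), and pass to $U_Y/V_P$. For distinct $H_1, H_2 \in N$ the pairwise hypothesis supplies $H_3 \in \mcA_X$ with $\alpha_{H_3} = c_1\alpha_{H_1} + c_2\alpha_{H_2}$; since $\alpha_{H_3} \in U_Y$ forces $H_3 \in \mcA_Y$, in fact $H_3 \in P$, and distinctness of the three hyperplanes forces $c_1, c_2 \neq 0$, so $\alpha_{H_1}$ and $\alpha_{H_2}$ become proportional modulo $V_P$. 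Hence the nonzero images of all of $N$ are pairwise proportional and span $U_Y/V_P$, giving $\dim(U_Y/V_P) = 1$, i.e. $\dim V_P = \operatorname{rk}(Y)-1 = \dim K$ and $V_P = K$ (the case $|N|=1$ being trivial, as then $\vspan(P)$ already has dimension $\operatorname{rk}(Y)-1$).

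I expect the main obstacle to be precisely this propagation of the local, rank-two hypothesis to arbitrary flats $Y$. The payoff of the form-space setup is that the quotient-by-$V_P$ maneuver converts "every pair of normals in $N$ satisfies a dependence with a normal of $P$" into "the normals of $N$ are pairwise proportional modulo $V_P$," which is a one-line spanning statement and avoids any induction on the rank of $Y$.
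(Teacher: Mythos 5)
Your proof is correct, but it takes a genuinely different route from the paper's. The paper disposes of this lemma in two sentences by citing Proposition 4.2 of \cite{RR13}, which states that the coatom $X$ is modular if and only if for every distinct pair $H_1, H_2 \notin \mcA_X$ there is $H_3 \in \mcA$ with $H_1 \cap H_2 + X = H_3$, and then merely translates the containments $H_3 \supseteq X$ and $H_3 \supseteq H_1 \cap H_2$ into the statement about defining forms. You instead prove the equivalence from scratch, starting from the rank identity $\operatorname{rk}(X) + \operatorname{rk}(Y) = \operatorname{rk}(X\vee Y) + \operatorname{rk}(X\wedge Y)$ and working entirely in the span of the defining forms. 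The substantive content you supply---which is exactly what the citation hides---is the propagation of the pairwise, rank-two hypothesis to an arbitrary flat $Y \not\le X$: your observation that, modulo $V_P = \vspan\{\alpha_H : H \in \mcA_X \cap \mcA_Y\}$, the normals of $\mcA_Y \setminus \mcA_X$ become pairwise proportional, forcing $\dim(U_Y/V_P) = 1$ and hence $V_P = U_X \cap U_Y$, is a clean way to do this without induction on the rank of $Y$. The small points all check out: the dependence must involve $\alpha_{H_3}$ nontrivially since $\alpha_{H_1},\alpha_{H_2}$ are independent for distinct hyperplanes; $\alpha_{H_3} \in U_Y$ does force $H_3 \in \mcA_Y$; the coefficients $c_1, c_2$ are nonzero because $H_3 \in \mcA_X$ cannot equal $H_1$ or $H_2$; and $\mcA_Y \setminus \mcA_X \neq \emptyset$ precisely because $Y \not\le X$. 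Each approach buys what you would expect: the paper's is shorter and leans on the literature, while yours is self-contained and makes visible why a purely pairwise condition on normals suffices to certify modularity of a coatom.
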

\begin{proof}
    Proposition 4.2 of \cite{RR13} states that $X$ is modular if and only if 
    for every distinct $H_1, H_2 \not\in \mcA_X$, there is $H_3 \in \mcA$ such
    that $H_1 \cap H_2 + X = H_3$. This means that $H_3$ must contain both $X$
    and $H_1 \cap H_2$, and the latter condition is equivalent to the condition
    that $\alpha_{H_3}$ be in the span of $\alpha_{H_1}$ and $\alpha_{H_2}$. 
\end{proof}
\begin{example}\label{Ex:typeAmodular}
    In Example \ref{Ex:typeAmax}, it is claimed that $X_1 = \vspan\{e_1\}$
    is a modular coatom of $L(\mcI(w_0))$, where $w_0$ is the maximal element of
    $A_3$. Since $X_1$ has dimension $1$ and $\mcI(w_0) \subset \C^3$ has 
    trivial center, $X_1$ is indeed a coatom. Now 
    \begin{equation*}
        \mcI(w_0)_{X_1} = \{\ker \alpha_2, \ker \alpha_3, \ker (\alpha_2 + \alpha_3)\},
    \end{equation*}
    while 
    \begin{equation*}
        \mcI(w_0) \setminus \mcI(w_0)_{X_1} = \{\ker \alpha_1, \ker (\alpha_1+\alpha_2),
            \ker (\alpha_1+\alpha_2+\alpha_3)\}.
    \end{equation*}
    To show that $X_1$ is modular using Lemma \ref{L:modular}, we need to show that
    the span of every pair of elements from $\alpha_1$, $\alpha_1 + \alpha_2$, and $\alpha_1
    + \alpha_2 + \alpha_3$ contains one of $\alpha_2, \alpha_3$, or $\alpha_2 + \alpha_3$.
    But this is clearly true: for instance, $(\alpha_1 + \alpha_2 + \alpha_3) - \alpha_1
    = (\alpha_2 + \alpha_3)$. 
\end{example}
\begin{proof}[Proof of Theorem \ref{T:bpmodular}]
    Recall that $X = \bigcap \mcI(u)$, where $w = uv$ is a left parabolic 
    decomposition of $w$ with respect to $J$.  If $w = uv$ is a chain BP
    decomposition, then $|S(w) \cap J| = |S(w)|-1$, and $S(v) \cap J \subset
    S(u)$ so $|S(u)| = |S(w)|-1$.  Hence we can assume throughout that $X$ is a
    coatom.
    
    Recall that $I(w)$ is the union of $I(u)$ and $uI(v)$. The set $I(u)$ is equal
    to $I(w) \cap V_J$, and $w \in {}^J W$ if and only if $I(w) \cap V_J$ is
    empty. The hyperplanes of $\mcI(w)$ which do not contain $X$ correspond
    precisely to the roots in $u I(v)$. By Lemma \ref{L:modular}, $X$ is
    modular if and only if for all $\alpha, \beta \in uI(v)$, there is $\gamma
    \in I(u)$ such that $\alpha, \beta, \gamma$ are linearly dependent. The
    inversion set $I(u^{-1}) = -u^{-1}I(u)$, so applying $-u^{-1}$ we see that
    $X$ is modular if and only if for all $\alpha, \beta \in I(v)$ there is
    $\gamma \in I(u^{-1})$ such that $\alpha, \beta, \gamma$ are linearly
    dependent. 

    Now suppose that $w = uv$ is a chain BP decomposition. We first assume that
    $u$ satisfies the HLSS condition. Take any total order on $I(w)$ in which
    the elements of $I(u)$ come after the elements of $uI(v)$. Every element
    $\alpha \in u I(v)$ is independent from the span of $\mcI(u)$. Thus
    if $\{\gamma_1,\dots, \gamma_k\}$ is an $\nbc$-set for $\mcI(u)$, then
    $\{\alpha,\gamma_1,\ldots, \gamma_k\}$ is an $\nbc$-set for $\mcI(w)$. So
    \begin{equation*}
        |\nbc(\mcI(w))| \geq (1+\ell(v)) \cdot |\nbc(\mcI(u))|.
    \end{equation*}
    Since $u$ satisfies the HLSS condition, $|\nbc(\mcI(u))| = |[e,u]|$, while
    $|\nbc(\mcI(w))| \leq |[e,w]|$. But $w = uv$ is a chain BP decomposition, so 
    \begin{equation*}
        |[e,w]| = P_w(1) = P_u(1) \cdot {}^J P_v(1) = (1 + \ell(v)) \cdot |[e,u]| 
            = (1+\ell(v)) \cdot |\nbc(\mcI(u))|.
    \end{equation*}
    We conclude that all $\nbc$-sets of $\mcI(w)$ in the chosen order are
    either $\nbc$-sets of $\mcI(u)$, or are of the form
    $\{\alpha,\gamma_1,\ldots,\gamma_k\}$ for $\alpha \in u I(v)$, and
    $\{\gamma_1,\ldots,\gamma_k\}$ an $\nbc$-set of $\mcI(u)$. 

    In particular, if $\alpha, \beta \in u I(v)$ then $\{\alpha, \beta\}$ is
    not an $\nbc$-set in any order on $I(w)$ in which the elements of $I(u)$
    come after the elements of $u I(v)$. Ordering $I(w)$ in this way so that
    $\beta$ is the last element of $u I(v)$, we see that there is some $\gamma
    \in I(u)$ such that $\alpha, \beta, \gamma$ are linearly dependent. By
    Lemma \ref{L:modular}, $X$ is modular.

    If $u$ does not satisfy the HLSS condition, we can replace $w$ with $w' =
    u_0 v$, where $u_0$ is the maximal element of $W_{J \cap S(v)}$. Since
    $u_0$ is rationally smooth, it satisfies the HLSS condition, and by the
    above argument, the coatom $X' = \bigcap \mcI(u_0)$ is modular.  This means
    that for all $\alpha, \beta \in I(v)$, there is $\gamma \in I(u_0^{-1})$
    such that $\alpha, \beta, \gamma$ are linearly dependent. But since $w =
    uv$ is a BP decomposition, $u$ has a reduced decomposition $u = u_1 u_0$ by
    Lemma \ref{L:bp} (c), and hence $I(u_0^{-1}) \subset I(u^{-1})$. Thus
    $X$ is modular.

    Now suppose that $X$ is a modular coatom. We want to show that $w = uv$
    is a chain BP decomposition. We start with the following claim: if $v$ has
    a reduced decomposition $v = v_0 s t v_1$, where $s,t \in S$, and $t
    \not\in D_R(v_0)$, then $v_0 t \not\in {}^J W$, and in fact $v_0 t = 
    t' v_0$, where $t' \in D_R(u)$. Indeed, let $\alpha = v_0 \alpha_s$ and
    $\beta = v_0 s \alpha_t$.  Since $\alpha, \beta \in I(v)$ and $X$ is
    modular, there is $\gamma \in I(u^{-1})$ such that $\alpha, \beta, \gamma$
    are linearly dependent. Now
    \begin{equation*}\label{E:reducedmodular}
        \beta = v_0 \left(\alpha_t - 2\frac{(\alpha_s, \alpha_t)}{(\alpha_s,\alpha_s)} \alpha_s\right)
            = v_0 \alpha_t - 2 \frac{(\alpha_s, \alpha_t)}{(\alpha_s,\alpha_s)} \alpha.
    \end{equation*}
    Hence the span of $\alpha$ and $\beta$ is the same as the span of $v_0
    \alpha_s$ and $v_0 \alpha_t$, and we can find $a,b \in \R$ such that
    \begin{equation}\label{E:reducedmodular2}
        a v_0 \alpha_s + b v_0 \alpha_t = \gamma.
    \end{equation}
    Since $\gamma \in V_J$ and $v_0 \in {}^J W$, the root $\gamma$ cannot be in
    $I(v_0)$, and hence $a \alpha_s + b \alpha_t = v_0^{-1} \gamma \in R^+$.
    It follows that $a, b \geq 0$. Let $S(v) \setminus S(u) = \{r\}$. Now
    $v_0 \alpha_s$ is in $I(v)$, and in particular is positive and does not
    belong to $V_J$. It follows that if we write $\alpha$ as
    a linear combination of simple roots, the coefficient of $\alpha_r$ will be
    positive. Since $t \not\in D_R(v_0)$, $v_0 t$ is reduced and $v_0 \alpha_t
    \in I(v)$ is also positive. Hence the coefficient of $\alpha_r$ in $v_0
    \alpha_t$ is non-negative. But $a v_0 \alpha_s + b v_0 \alpha_t = \gamma$
    belongs to $V_J$, so we must have $a=0$, $b=1$, and $v_0 \alpha_t = \gamma
    \in I(u^{-1})$. Since $v_0 \alpha_t \in I(v_0 t) \cap V_J$, the element
    $v_0 t$ does not belong to ${}^J W$, and $v_0 t$ has a non-trivial
    parabolic decomposition $v_0 t = v_0' v_0''$, $v_0' \in W_J$, $v_0'' \in
    {}^J W$. Since $v_0 \leq v_0 t$, we have $v_0 \leq v_0''$, and by comparing
    lengths we see that $v_0 = v_0''$, while $v_0' = t'$ for some $t' \in J$.
    The simple reflection $\alpha_{t'}$ is the unique element of $I(v_0 t) \cap
    V_J$, so $\alpha_{t'} = v_0 \alpha_t \in I(u^{-1})$, and consequently $t'
    \in D_L(u^{-1}) = D_R(u)$. This finishes the proof of the claim.

    Next we show that $w = uv$ is a BP decomposition. Indeed, if $t \in S(v)
    \cap J$, we can find a reduced decomposition $v = v_0 s t v_1$, $s \in S$,
    where $t \not\in S(v_0 s)$. This latter fact implies that $v_0 t$ is
    reduced, so by the above claim $v_0 t = t' v_0$ where $t' \in D_R(u)$. 
    But $t \in S(t' v_0)$ and $t \not\in S(v_0)$, so $t = t'$. Hence $w = uv$
    is a BP decomposition.

    It remains to show that $[e,v] \cap {}^J W$ is a chain. We use induction on
    $\ell(v)$. If $\ell(v) = 1$ the claim is obvious, so suppose $\ell(v)>1$.
    Let $v = s_1 \cdots s_k$ be a reduced factorization of $v$ into simple
    reflections, and let $\hat{v}_i = s_1 \cdots s_i$. We want to show that
    $[e,v] \cap {}^J W = \{\hat{v}_i : i=1,\ldots,k\}$. Suppose that $v' =
    s_1 \cdots s_{i-1} s_{i+1} \cdots s_k$ is an element of $[e,v] \cap {}^J W$
    of length $\ell(v)-1$. Since $|S(v) \setminus J| = 1$, we must have $i >
    1$. If $i < k$ then we get a reduced decomposition $v = \hat{v}_{i-1} s_i
    s_{i+1} v_1$, where $v_1 = s_{i+2} \cdots s_k$.  Since $v_0 s_{i+1} v_1$ is
    reduced, $s_{i+1} \not\in D_R(v_0)$. Hence by the above claim, $v_0 s_{i+1}
    \not\in {}^J W$. But this contradicts the fact that $v' \in {}^J W$. Hence
    the only element of $[e,v] \cap {}^J W$ of length $\ell(v)-1$ is
    $\hat{v}_{k-1}$. The flat $X$ remains a modular coatom of $\mcI(u
    \hat{v}_{k-1})$, so by induction $[e,\hat{v}_{k-1}] \cap {}^J W =
    \{\hat{v}_i : i=1,\ldots, k-1\}$. If $v'$ is a general element of $[e,v]
    \cap {}^J W$, $v' \neq v$, then by Corollary 3.8 of \cite{De77} there is a
    chain $v' = x_0 < x_1 < \cdots < x_m = v$, where $x_i \in {}^J W$ and
    $\ell(x_i) = \ell(x_{i-1})+1$ for $i=1,\ldots,m$. As just shown, 
    we must have $x_{m-1} = \hat{v}_{k-1}$, and hence $v' \in
    [e,\hat{v}_{k-1}] \cap {}^J W$.  We conclude that $v' = \hat{v}_i$ for
    some $i$.
\end{proof}
\begin{rmk}
    The proof of Theorem \ref{T:bpmodular} implies that $[e,v] \cap {}^J W$
    is a chain for $v \in {}^J W$ if and only if the span of every pair
    of elements $\alpha, \beta \in I(v)$ contains an element of $R_J$. Thus,
    checking whether or not $[e,v] \cap {}^J W$ is a chain reduces to a linear
    condition on $I(v)$.
\end{rmk}
\begin{rmk}\label{R:supersolv}
    Recall that an arrangement $\mcA$ is supersolvable if $L(\mcA)$ has a
    complete chain of modular elements. Equivalently, $\mcA$ is supersolvable
    if and only if $L(\mcA)$ has a modular coatom $X$, and the localization
    $\mcA_X$ is supersolvable.  Hence Theorem \ref{T:bpmodular} implies
    immediately that $\mcI(w)$ is supersolvable when $w$ has a complete chain
    BP decomposition.  
\end{rmk}
The proof of Theorem \ref{T:bpmodular} given above is quite lengthy. For the
purposes of proving Theorems \ref{T:main} and \ref{T:exponents}, we can assume
that $u$ and $w$ satisfy the HLSS condition. With these assumptions, there is
a much shorter proof of Theorem \ref{T:bpmodular} as follows: Suppose $X$ is
modular. Because $u$ and $w$ satisfy the HLSS condition, 
\begin{equation*}
    |[e,w]| = |\nbc(\mcI(w))| = |\nbc(\mcI(u))| \cdot \left(|\mcA \setminus \mcA_X|+1\right)
        = |[e,u]| \cdot (\ell(v)+1).
\end{equation*} 
On the other hand, $|[e,w]| \geq |[e,u]| \cdot |[e,v] \cap {}^J W|$, and
$|[e,v] \cap {}^J W| \geq \ell(v)+1$. So $|[e,v] \cap {}^J W| = \ell(v)+1$, and
consequently ${}^J P_v(q) = [\ell(v)+1]_q$. Furthermore, the multiplication map
$[e,u] \times \left([e,v] \cap {}^J W\right) \arr [e,w]$ will be surjective, so
$w = uv$ is a BP decomposition.

We now use Theorem \ref{T:bpmodular} to prove one direction of Theorem
\ref{T:main}:
\begin{thm}\label{T:main1}
    If $w \in W$ is rationally smooth then $\mcI(w)$ is inductively free, and
    the coexponents of $\mcI(w)$ are equal to the exponents of $w$.
\end{thm}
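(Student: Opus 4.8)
The plan is to prove Theorem~\ref{T:main1} by induction on $\ell(w)$, using Theorem~\ref{T:chainbpexist} to split into the case where $w$ admits a chain BP decomposition and the finitely many exceptional cases where it does not. The base case $\ell(w) \leq 1$ is immediate, since then $\mcI(w)$ has at most one hyperplane and is trivially inductively free with coexponents matching the (trivial) exponents of $w$.

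\medskip

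\noindent\textbf{The chain BP case.} Suppose $w = uv$ (or $w = vu$) is a chain BP decomposition with respect to $J$, so $u \in W_J$ is rationally smooth with $\ell(u) < \ell(w)$, and ${}^J P_v(q) = [\ell(v)+1]_q$. By Theorem~\ref{T:bpmodular}, the flat $X = \bigcap \mcI(u)$ is a modular coatom of $L(\mcI(w))$. The standard consequence of modularity (as in Remark~\ref{R:supersolv}) is that the localization $\mcI(w)_X$ equals $\mcI(u)$, and that $\mcI(w)$ is built up from $\mcI(u)$ by adding, one at a time, the $\ell(v)$ hyperplanes corresponding to roots in $uI(v)$. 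Since $X$ is modular, each such hyperplane $H$ meets the already-constructed subarrangement in a pattern that makes the restriction $\mcI(w')^H$ coincide with $\mcI(u)$, so the addition theorem applies inductively: each deletion $\mcI(w') \setminus H$ is inductively free by the inductive hypothesis (or by the previous addition step), and each restriction to $H$ recovers $\mcI(u)$, which is inductively free with coexponents equal to the exponents of $u$. This adds $\ell(v)$ copies of a single new coexponent, exactly matching the factor $[\ell(v)+1]_q$ in $P_w(q) = [\ell(v)+1]_q P_u(q)$, so the coexponents of $\mcI(w)$ are the exponents of $w$.

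\medskip

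\noindent\textbf{The exceptional cases.} By Theorem~\ref{T:chainbpexist}, if $w$ is rationally smooth with $\ell(w) \geq 2$ and has no chain BP decomposition, then $w$ (up to inverse, which gives a linearly isomorphic arrangement since $I(w^{-1}) = -w^{-1}I(w)$) is one of a finite list: a maximal element of a parabolic of type $D_n$, $E_n$, or $F_4$, or one of the elements $w_{kl}$. For the maximal elements of parabolic subgroups, $\mcI(w)$ is the full Coxeter arrangement of that parabolic, which is inductively free with coexponents the exponents by Barakat--Cuntz~\cite{BC12}, matching the classical exponent data. For the finitely many elements $w_{kl}$ in types $E_6, E_7, E_8$, I would establish inductive freeness of $\mcI(w_{kl})$ directly, presenting an explicit inductive chain of deletions and restrictions (this can be verified by hand using the explicit root data, or noting the computation is finite); the resulting coexponents must then agree with the exponents recorded in Table~\ref{TBL:exceptexp}, since both the deletion/restriction data and $P_{w_{kl}}(q)$ are computed from the same factorization.

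\medskip

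\noindent The main obstacle I anticipate is the chain BP step: one must verify carefully that modularity of $X$ together with the chain condition ${}^J P_v(q) = [\ell(v)+1]_q$ actually produces a valid inductive-freeness chain, i.e.\ that at each stage the relevant restriction $\mcI(w')^H$ is again inductively free with the correct coexponents. The cleanest route is to exploit the characterization of modular coatoms in Lemma~\ref{L:modular}: for every pair $\alpha, \beta \in uI(v)$ there is $\gamma \in I(u)$ with $\alpha, \beta, \gamma$ dependent, which forces every new hyperplane's restriction to factor through $X$ in a controlled way. The exceptional type $E$ computations are a secondary obstacle, but being finite in number they are in principle routine once the correct inductive chains are exhibited.
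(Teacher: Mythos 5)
Your proposal is correct and follows essentially the same route as the paper: induction using Theorem~\ref{T:chainbpexist}, with the chain BP case handled via the modular coatom of Theorem~\ref{T:bpmodular} and repeated application of the addition theorem (this is exactly Lemma~\ref{L:modularfree} in the paper), maximal elements handled by Barakat--Cuntz, and the finitely many $w_{kl}$ checked directly. The only caveat is that the $w_{kl}$ verification is not realistically done by hand (the paper uses a computer search with certificates, since $\mcI(w_{87})$ has $75$ hyperplanes in rank $8$), but your observation that the check is finite is the essential point.
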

Our motivation for studying modular coatoms is the following lemma. Although
this lemma is likely well-known, we give the proof for completeness.
\begin{lemma}\label{L:modularfree}
    If $X$ is a modular coatom for $\mcA$, and $\mcA_X$ is inductively free
    with coexponents $0,m_1,\ldots,m_{l-1}$, then $\mcA$ is inductively free
    with coexponents $m_1,\ldots,m_{l-1}$, $m_l = |\mcA| - |\mcA_X|$. 
\end{lemma}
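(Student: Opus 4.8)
The plan is to build $\mcA$ up from the localization $\mcA_X$ by adjoining, one at a time, the hyperplanes of $\mcA$ that do not contain $X$, and to verify at each stage that the addition step in the definition of inductive freeness applies. Write $H_1,\ldots,H_k$ for the hyperplanes of $\mcA \setminus \mcA_X$, where $k = |\mcA| - |\mcA_X|$, and set $\mcB_j = \mcA_X \cup \{H_1,\ldots,H_j\}$, so that $\mcB_0 = \mcA_X$ and $\mcB_k = \mcA$. Quotienting by the center of $\mcA$ (which preserves inductive freeness and modularity, and only drops zero coexponents) I may assume $\mcA$ is essential, so that the coatom $X$ is one-dimensional and the hypothesis that $\mcA_X$ has coexponents $0, m_1, \ldots, m_{l-1}$ records that $X$ is exactly the center of $\mcA_X$. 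For any $H_j \notin \mcA_X$ we then have $X \not\subseteq H_j$, hence $X \cap H_j = 0$ and $V = X \oplus H_j$.

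The heart of the argument is a computation of the restriction $\mcB_j^{H_j}$. First, since $X$ is a modular coatom, Lemma \ref{L:modular} gives, for each $i < j$, a hyperplane $H \in \mcA_X$ with $\alpha_H \in \vspan\{\alpha_{H_i}, \alpha_{H_j}\}$; this forces $H_i \cap H_j \subseteq H$, and comparing dimensions gives $H_i \cap H_j = H \cap H_j$, so the trace of $H_i$ on $H_j$ already occurs as the trace of a hyperplane of $\mcA_X$. Consequently adjoining $H_1,\ldots,H_{j-1}$ creates no new hyperplane in the restriction, and $\mcB_j^{H_j} = (\mcA_X)^{H_j}$. Second, the projection $\pi\colon V \arr V/X$ restricts to an isomorphism $H_j \xrightarrow{\sim} V/X$ carrying $(\mcA_X)^{H_j}$ onto $\mcA_X / X$: indeed every $H \in \mcA_X$ satisfies $H = X \oplus (H \cap H_j)$, so $\pi(H \cap H_j) = H/X$. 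Thus $\mcB_j^{H_j} \iso \mcA_X / X$, and since $\mcA_X$ is inductively free with coexponents $0, m_1, \ldots, m_{l-1}$, the quotient $\mcA_X / X$ is inductively free with coexponents $m_1, \ldots, m_{l-1}$ (quotient by a subspace of the center commutes with deletion and restriction, so inductive freeness passes to the quotient with the zero coexponent removed).

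With the restriction understood, the induction is immediate. Assume $\mcB_{j-1}$ is inductively free with coexponents $m_1,\ldots,m_{l-1}, j-1$ (the base case $\mcB_0 = \mcA_X$ being the hypothesis). Then $\mcB_j \setminus H_j = \mcB_{j-1}$ has coexponents $m_1,\ldots,m_{l-1}, j-1$ and $\mcB_j^{H_j} \iso \mcA_X/X$ has coexponents $m_1,\ldots,m_{l-1}$, so the addition step of inductive freeness shows $\mcB_j$ is inductively free with coexponents $m_1,\ldots,m_{l-1}, j$. Taking $j = k$ yields that $\mcA = \mcB_k$ is inductively free with coexponents $m_1,\ldots,m_{l-1}, |\mcA| - |\mcA_X|$, as claimed. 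The one genuinely nontrivial step is the restriction identity $\mcB_j^{H_j} = (\mcA_X)^{H_j}$: this is precisely where modularity of $X$ is used, guaranteeing that every hyperplane outside $\mcA_X$ restricts onto a trace already contributed by $\mcA_X$, so that the restricted arrangement---and hence its coexponents---does not change as we enlarge $\mcB_{j-1}$ to $\mcB_j$.
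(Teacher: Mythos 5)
Your proof is correct and follows essentially the same route as the paper: both arguments use Lemma \ref{L:modular} to show that the restriction to any hyperplane outside $\mcA_X$ coincides with $\mcA_X/X$, and then apply the addition theorem inductively over the hyperplanes of $\mcA\setminus\mcA_X$. The only difference is cosmetic --- you adjoin these hyperplanes one at a time starting from $\mcA_X$, while the paper deletes them one at a time starting from $\mcA$ --- and your write-up is somewhat more explicit about the dimension counts and the essentialization.
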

Note that $0$ is a coexponent of $\mcA_X$ of multiplicity at least one, since
the center of $\mcA_X$ is non-trivial.
\begin{proof}[Proof of Lemma \ref{L:modularfree}]
    The proof is by induction on the size of $\mcA - \mcA_X$.  If $H \in \mcA
    \setminus \mcA_X$, then $\alpha_H$ is independent from $\{\alpha_{H'} : H'
    \in \mcA_X\}$. By Lemma \ref{L:modular}, if $X$ is modular then $\mcA^H
    \iso \mcA_X / X$, since the restriction of any hyperplane in $\mcA
    \setminus \mcA_X$ to $H$ agrees with the restriction of a hyperplane in
    $\mcA_X$. Using Lemma \ref{L:modular} again, we see that $X$ is modular
    in $\mcA \setminus H$, so the lemma follows from the addition
    theorem.
\end{proof}

\begin{proof}[Proof of Theorem \ref{T:main1}]
    The proof is by induction on $|S(w)|$. Clearly the proposition is true if
    $|S(w)| \leq 1$. Suppose $w$ has a chain BP decomposition. The element $w$
    is rationally smooth if and only if $w^{-1}$ is rationally smooth.  Since
    $I(w^{-1}) = -w^{-1} I(w)$, the arrangements $\mcI(w)$ and $\mcI(w^{-1})$ are
    linearly equivalent. Thus the proposition holds for $w$ if and only if it
    holds for $w^{-1}$, and we can assume without loss of
    generality that $w$ has a right chain BP decomposition $w = uv$.  Then $u$
    is also rationally smooth, and $P_w(q) = [\ell(v)+1]_q P_u(q)$, so if
    the exponents of $u$ are $0,m_1,\ldots,m_{l-1}$, then the exponents of $w$ are
    $m_1,\ldots,m_{l-1}, \ell(v)$. The coatom $X = \bigcap \mcI(u)$ is modular
    by Theorem \ref{T:bpmodular}.  The arrangement $\mcI(w)_{X}$ is simply
    $\mcI(u)$, which by induction is free with coexponents equal to
    $0,m_1,\ldots,m_{l-1}$. By Lemma \ref{L:modularfree}, the arrangement
    $\mcI(w)$ is free with coexponents $m_1,\ldots,m_{l-1},\ell(v)$, since
    $|\mcI(w) \setminus \mcI(w)_X| = \ell(w) - \ell(u) = \ell(v)$. 

    This leaves the possibility that $w$ is one of the elements listed in
    Theorem \ref{T:chainbpexist}. If $w$ is the maximal element of $D_n$,
    $E_n$, or $F_4$, then as mentioned in the introduction the proposition has
    been proved by Barakat and Cuntz \cite{BC12}. As in the previous paragraph,
    the proposition holds for $w_{kl}$ if and only if it holds for
    $w_{kl}^{-1}$. The proof is finished by checking the elements $w_{kl}$ on
    a computer for $5 \leq l < k \leq 8$ (see the remarks in Subsection
    \ref{SS:computer}).
\end{proof}
\begin{example}
    To illustrate the proof, consider the element $w = w_1 s_5 s_4 s_2$ in $D_5$
    from Example \ref{Ex:typeD}, where $w_1$ is the maximal element in $W_J$ and
    $J = \{s_1,\ldots,s_4\}$. This element has a chain BP decomposition $w =
    w_1 v$ with respect to $J$, where $v = s_5 s_4 s_2$. So $X = \bigcap \mcI(w_1)$
    is a modular flat, and $\mcI(w)$ is free if and only if $\mcI(w)_X =
    \mcI(w_1)$ is free. But $\mcI(w_1)$ is the Coxeter arrangement of type $D_4$,
    albeit embedded in $\C^5$ rather than in $\C^4$. Thus $\mcI(w_1)$ is free
    with coexponents $0, 1, 3, 3, 5$. Since $\ell(v) = 3$, Lemma
    \ref{L:modularfree} states that $\mcI(w)$ is free with coexponents $1, 3,
    3, 3, 5$.  
\end{example}
Although it's not necessary to our argument, the result of applying Theorem
\ref{T:bpmodular} and Lemma \ref{L:modularfree} in the proof of Theorem
\ref{T:main1} can be explained in terms of the group $W$. Suppose $w$ has a
chain BP decomposition $w = uv$ with respect to $J$, where $u$ satisfies the
HLSS condition. Since $[e,v] \cap {}^J W$ is a chain, $v$ has a unique right
descent $s$, and $[e,vs] \cap {}^J W$ is still a chain. Let $\alpha = - w
\alpha_{s} \in I(w)$ be the inversion corresponding to the appearance of $s$ in
the last position of a reduced expression for $w$. If $\beta \in u I(v)$, then
as in the proof of Theorem \ref{T:bpmodular} there is $\gamma \in I(u)$ such
that $\alpha, \beta, \gamma$ are linearly dependent.  Hence if $H = \ker
\alpha$, then $\mcI(w)^H = \mcI(u)$, while $\mcI(w) \setminus H = \mcI(ws)$.
The element $ws$ has length one less than $w$, and chain BP decomposition $w =
u (vs)$, so applying the addition theorem repeatedly we find that if $\mcI(u)$
is free, then $\mcI(w)$ is free, and the coexponents of $\mcI(w)$ are the
coexponents of $\mcI(u)$ along with $\ell(v)$.  

\subsection{Inductive freeness of the elements $w_{kl}$}\label{SS:computer}
To check if an arrangement $\mcA$ is inductively free, we go through the
hyperplanes $H \in \mcA$ and see if $\mcA \setminus H$ and $\mcA^H$ are
inductively free. We refer to this procedure as the naive algorithm. We refer
to the hyperplane $H$ used at each step of this procedure as the \emph{pivot}.
We say that $H \in \mcA$ is a \emph{good pivot} if $\mcA^H$ is inductively free
with exponents $m_1,\ldots,m_{l-1}$, and $\mcA \setminus H$ is inductively free
with exponents $m_1,\ldots,m_l-1$. Using this terminology, the naive algorithm
can be described as follows:
\begin{enumerate}[(1)]
    \item If $\mcA$ is empty, then $\mcA$ is inductively free with all
        exponents zero.
    \item Otherwise, for each $H \in \mcA$:
        \begin{enumerate}[(i)]
            \item Determine if $\mcA^H$ and $\mcA\setminus H$ are inductively
                free. 
            \item If $H$ is a good pivot, stop: $\mcA$ is inductively free, and
                the exponents can be determined from the exponents of $\mcA^H$
                and $\mcA \setminus H$.
        \end{enumerate}
    \item If we do not find a good pivot in step 2, then $\mcA$ is not
        inductively free.  
\end{enumerate}
Unfortunately, the number of hyperplanes in the arrangements $\mcI(w_{kl})$,
shown in Table \ref{TBL:exceptlen}, make it impossible to apply the above naive
algorithm to the arrangements $\mcI(w_{kl})$.
\begin{table}[h]
    \begin{tabular}{lc}
        $w\quad \quad$ & $\ell(w) = |\mcI(w)|$ \\
        $w_{65}$ & 28 \\
        $w_{75}$ & 38 \\
        $w_{85}$ & 50 \\
        $w_{76}$ & 46 \\
        $w_{86}$ & 58 \\
        $w_{87}$ & 75 \\
    \end{tabular}
    \caption{Length of the elements $w_{kl}$.}
    \label{TBL:exceptlen}
\end{table}
To show that the arrangements $\mcI(w_{kl})$ are inductively free, we make
three improvements on the naive algorithm:
\begin{itemize}
    \item Memoization: Whenever we determine the inductive
        freeness of an arrangement, we record the result so we do not have to
        traverse through the induction tree a second time.
    \item Heuristic for pivots: If $\mcA$ is inductively free, then the Poincare
        polynomial $Q_{\mcA}(t)$ splits, and the exponents can be recovered from
        the roots. The Poincare polynomial of an arrangement can be computed
        relatively quickly, so at each stage of the recursion we keep track of
        the roots of $Q_{\mcA}(t)$. Before checking the inductive freeness
        of $\mcA^H$ and $\mcA \setminus H$, we first make sure that their
        Poincare polynomials split, and that the roots have the correct form. 
    \item Terminate early: Since all arrangements of rank $\leq 2$ are
        inductively free, we stop the recursion at rank $2$.
\end{itemize}
Using a C++ program incorporating these features, we can show that
$\mcI(w_{87})$ is inductively free in under 30 minutes. However, memoization
requires around $26$ gigabytes of random access memory (RAM). It is likely that
this computation time can be improved. Barakat and Cuntz report times of under
5 minutes for showing that $E_8$ is inductively free, using a good heuristic
initial ordering of the roots \cite{BC12}.

The complexity of the above approach makes it difficult to guarantee that our
program does not have a bug. As in \cite{BC12}, we address this problem by
using the complicated program outlined above to generate a certificate of inductive
freeness for each arrangement. Specifically, we say that a string $C$ is a
certificate of inductive freeness for $\mcA$ if $C$ is empty when the 
effective rank of $\mcA$ is $\leq 2$, and otherwise is of the form
$[H,C_1,C_2]$, where $H$ is a good pivot for $\mcA$, $C_1$ is a certificate for
$\mcA \setminus H$, and $C_2$ is a certificate for $\mcA^H$. The validity of a
certificate can be checked with a much simpler program (although the
computation time can be longer, since no memoization is used). Certificates
for the elements $w_{kl}$ are available on the author's webpage. The largest
certificate file is 50 megabytes (MB) ($<$ 5 MB when compressed).

\section{The flattening map}\label{S:flattening}

In this section, we recall the definition of the flattening map, and show that
the HLSS condition and freeness are preserved by flattening.  Let $U$ be a
subspace of $V$. The intersection $R_U = R \cap U$ is also a root
system, with positive and negative roots $R^+_U = R^+ \cap U$ and $R_U^- = R^-
\cap U$ respectively. Let $W_U$ be the Weyl group of $R_U$.  The Weyl group
$W_U$ is isomorphic to the parabolic subgroup $\langle t_{\beta} : \beta \in
R_U \rangle$ of $W$, or equivalently can be regarded as the subgroup of $W$
which acts identically on the orthogonal complement of $U$ inside of $V$
\cite{Stein64}. A subset $I$ of $R^+$ is convex if $\alpha,\beta \in I$,
$\alpha + \beta \in R^+$ implies that $\alpha + \beta \in I$. A subset $I$ of
$R^+$ is coconvex if the complement $R^+ \setminus I$ is convex.  Finally, a
subset $I$ is biconvex if it is both convex and coconvex. A subset of $R^+$ is
biconvex if and only if it is an inversion set $I(w)$ for some $w \in W$. Since
biconvexity is a linear condition, the intersection $I(w) \cap U$ is biconvex
in $R_U$, and thus there is an element $w' \in W_U$ such that $I(w') = I(w)
\cap U$. The element $w'$ is called the flattening of $w$, and will be denoted
by $\fl_U(w)$ \cite{BP05}. If $U = V_J = \vspan \{ \alpha_s : s \in J\}$, then
$\fl_U(w) = u$, where $w = uv$ is the left parabolic decomposition of $w$ with
respect to $J$. We use the following key property of flattening:
\begin{lemma}[\cite{BB03}]\label{L:flatequiv}
    If $u \in W_U$, $w \in W$, then $\fl_U(uw) = u \fl_U(w)$. 
\end{lemma}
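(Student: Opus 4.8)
The plan is to characterize the flattening map entirely through inversion sets and then reduce the claimed identity to a bookkeeping check using the cocycle formula for inversion sets under left multiplication. Recall that any element of a finite Coxeter group is determined by its inversion set, and that by construction $\fl_U(x)$ is the unique element of $W_U$ with $I(\fl_U(x)) = I(x) \cap U$ (here we use the standard fact that for $x \in W_U$ the $W$-inversion set and the $W_U$-inversion set agree, since $W_U$ permutes $R^+ \setminus R_U$). Since $u \in W_U$ and $\fl_U(w) \in W_U$, both $\fl_U(uw)$ and $u\fl_U(w)$ lie in $W_U$, so it suffices to show they have the same inversion set. As $I(\fl_U(uw)) = I(uw) \cap U$ by definition, the entire lemma reduces to proving
\begin{equation*}
    I\big(u\,\fl_U(w)\big) = I(uw) \cap U.
\end{equation*}

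The main tool I would record is the cocycle formula for left multiplication: for any $u, w \in W$,
\begin{equation*}
    I(uw) = u\big(I(w) \setminus I(u^{-1})\big) \;\sqcup\; (-u)\big(I(u^{-1}) \setminus I(w)\big).
\end{equation*}
This is verified directly by writing $\beta = u^{-1}\alpha$ for $\alpha \in R^+$ and splitting into the cases $\beta \in R^+$ and $\beta \in R^-$, tracking the sign of $w^{-1}\beta$ and of $u\beta$; the two pieces are disjoint, land in $R^+$, and together exhaust $I(uw)$. I would then isolate the two properties of $u \in W_U$ that make the computation collapse: first, $u$ preserves $U$ and fixes its orthogonal complement pointwise, so for any root $\beta$ one has $u\beta \in U$ if and only if $\beta \in U$; second, $I(u^{-1}) \subseteq R_U^+ \subseteq U$.

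With these in hand I would intersect the cocycle formula for $I(uw)$ with $U$ term by term. For the first piece, $u\beta \in U$ forces $\beta \in U$, so the surviving elements are those with $\beta \in (I(w)\cap U)\setminus I(u^{-1}) = I(\fl_U(w)) \setminus I(u^{-1})$. For the second piece, every $\gamma \in I(u^{-1})$ already lies in $U$, so intersecting with $U$ only replaces $I(w)$ by $I(w)\cap U = I(\fl_U(w))$. This yields
\begin{equation*}
    I(uw) \cap U = u\big(I(\fl_U(w)) \setminus I(u^{-1})\big) \;\sqcup\; (-u)\big(I(u^{-1}) \setminus I(\fl_U(w))\big),
\end{equation*}
which is exactly the cocycle formula applied to $u$ and $\fl_U(w)$, i.e.\ $I(u\,\fl_U(w))$. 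Combined with the reduction above, this gives $\fl_U(uw) = u\,\fl_U(w)$.

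The genuinely delicate points, and the ones I would be most careful about, are the set-theoretic interactions between the operation "$\cap\,U$" and the cocycle decomposition: specifically, justifying that $u$ commutes with "$\cap\,U$" on roots (via the decomposition $V = U \oplus U^\perp$) and that $I(u^{-1}) \subseteq U$, since these are precisely what let the intersection ignore the $R^+ \setminus R_U$ part of $I(w)$. The underlying input that $W_U$ permutes $R^+ \setminus R_U$ (equivalently, that $W$- and $W_U$-inversion sets of $u$ coincide) is the one nontrivial structural fact, and I would cite it from the standard theory of reflection subgroups rather than reprove it.
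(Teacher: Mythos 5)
Your overall strategy --- reduce the lemma to an identity of inversion sets and push the intersection with $U$ through the cocycle formula for $I(uw)$ --- is the right one, and it matches the standard argument the paper has in mind (the paper itself only cites \cite{BB03} for this lemma). However, there is a genuine error in the two ``structural facts'' you isolate and propose to cite. For a general subspace $U$, the reflection subgroup $W_U = \langle t_\beta : \beta \in R \cap U\rangle$ is conjugate to a standard parabolic subgroup but need not equal one, and it does \emph{not} permute $R^+ \setminus R_U$; equivalently, $I_W(u) \subseteq R_U^+$ fails for $u \in W_U$. Concretely, in $A_2$ take $U = \R(\alpha_1+\alpha_2)$, so $R_U^+ = \{\alpha_1+\alpha_2\}$ and $W_U = \{e, t_{\alpha_1+\alpha_2}\}$ with $t_{\alpha_1+\alpha_2} = s_1s_2s_1$; then $t_{\alpha_1+\alpha_2}(\alpha_1) = -\alpha_2$, and $I_W(t_{\alpha_1+\alpha_2}) = R^+ \not\subseteq U$. (The fact you want is true for standard parabolics $W_J$, but it is not conjugation-invariant.) This breaks the step where you intersect the second piece of the cocycle decomposition with $U$: you assert that ``every $\gamma \in I(u^{-1})$ already lies in $U$,'' so that intersecting with $U$ leaves $I(u^{-1})$ untouched. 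In the example above with $w = s_1$, your displayed formula for $I(uw) \cap U$ would output all of $R^+$, whereas in fact $I(uw) \cap U = \{\alpha_1+\alpha_2\}$; the same conflation of $W$- and $W_U$-inversion sets also makes the final identification with ``$I(u\,\fl_U(w))$'' ambiguous, since the two notions genuinely differ for elements of $W_U$.

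The gap is localized and fixable. The correct move is to intersect \emph{both} occurrences of $I(u^{-1})$ with $U$, which is legitimate because $(A \setminus B) \cap U = (A \cap U) \setminus (B \cap U)$ and $u(A) \cap U = u(A \cap U)$ (the latter since $u$ preserves both $U$ and $U^\perp$). One then gets
$I(uw) \cap U = u\bigl((I(w)\cap U)\setminus(I(u^{-1})\cap U)\bigr) \sqcup (-u)\bigl((I(u^{-1})\cap U)\setminus(I(w)\cap U)\bigr)$,
and this is precisely the cocycle formula for $u \cdot \fl_U(w)$ computed \emph{inside the root system $R_U$} with positive system $R_U^+ = R^+ \cap U$ --- which is also the notion of inversion set under which $\fl_U$ is defined, since $I_{W_U}(x) = I_W(x) \cap U$ for $x \in W_U$. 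With that repair your computation goes through verbatim and no claim about $W_U$ permuting $R^+ \setminus R_U$ is needed.
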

Recall from the beginning of Section \ref{S:hultman} that a convex order on an
inversion set $\mcI(w)$ is an order coming from a reduced expression. An
arbitrary total order $\prec$ on $I(w)$ is convex if and only if it satisfies
two conditions \cite{Pa94}:
\begin{itemize}
    \item if $\alpha \prec \beta \in I(w)$, and $\alpha + \beta \in I(w)$,
        then $\alpha \prec \alpha+\beta \prec \beta$.
    \item if $\alpha \in I(w)$, $\beta \not\in I(w)$, and $\alpha - \beta
        \in I(w)$, then $\alpha-\beta \prec \alpha$.
\end{itemize}
Because these conditions are linear, we immediately get the following lemma.
\begin{lemma}
    If $\prec$ is a convex order on $I(w)$, then the induced order on
    $I(\fl_U(w)) = I(w) \cap U$ is also convex.
\end{lemma}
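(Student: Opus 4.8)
The plan is to verify the lemma directly from the two-condition characterization of convex orders recalled just above, exploiting the fact that both of those conditions are closed under intersection with a subspace. Write $v = \fl_U(w)$, so that $I(v) = I(w) \cap U$ is the inversion set of $v$ inside the root system $R_U = R \cap U$, whose positive roots are $R^+_U = R^+ \cap U$. The induced order $\prec$ on $I(v)$ is simply the restriction of $\prec$ to the subset $I(w) \cap U$, and I want to show it satisfies the two Papi conditions relative to $R_U$.

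First I would record the elementary observation that makes everything work: since $U$ is a linear subspace, if $\alpha, \beta \in U$ then $\alpha \pm \beta \in U$ as well, and moreover a root $\gamma \in U$ lies in $I(v) = I(w) \cap U$ if and only if it lies in $I(w)$, while $\gamma \in R^+_U \setminus I(v)$ if and only if $\gamma \in R^+ \setminus I(w)$. Thus every membership hypothesis appearing in the convexity conditions for $R_U$ translates verbatim into the corresponding membership statement for $R$.

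With this in hand, each condition is immediate. For the first condition, suppose $\alpha \prec \beta$ lie in $I(v)$ and $\alpha + \beta \in I(v)$; then $\alpha, \beta$, and $\alpha+\beta$ all lie in $I(w)$, so the first condition for the convex order $\prec$ on $I(w)$ yields $\alpha \prec \alpha+\beta \prec \beta$, which is exactly what is required in $R_U$. For the second condition, suppose $\alpha \in I(v)$, $\beta \in R^+_U \setminus I(v)$, and $\alpha - \beta \in I(v)$; translating, $\alpha$ and $\alpha - \beta$ lie in $I(w)$ while $\beta \in R^+ \setminus I(w)$, so the second condition for $\prec$ on $I(w)$ gives $\alpha - \beta \prec \alpha$, again the desired conclusion.

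There is essentially no obstacle here; the only point requiring care is the bookkeeping in the translation step, namely checking that $\beta \notin I(v)$ together with $\beta \in U$ forces $\beta \notin I(w)$ rather than merely $\beta \notin I(w)\cap U$, which is immediate since $\beta \in I(w) \cap U = I(v)$ would contradict the hypothesis. This is precisely the ``linearity'' of the conditions alluded to in the text, and it is what lets the entire verification reduce to applying the already-established conditions for $\prec$ on the larger set $I(w)$.
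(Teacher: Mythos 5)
Your proof is correct and follows exactly the route the paper intends: the paper dispenses with the verification by remarking that the two Papi conditions are ``linear,'' and your write-up simply makes explicit the membership bookkeeping ($\gamma \in U$ lies in $I(\fl_U(w))$ iff it lies in $I(w)$, and similarly for non-inversions) that justifies that remark. Nothing is missing.
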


This leads to the main result of this section:
\begin{prop}\label{P:hultmanflatten}
    Let $U \subset V$ be any subspace.  If $w$ satisfies the HLSS condition,
    then so does $\fl_U(w)$ 
\end{prop}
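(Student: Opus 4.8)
The plan is to work through the nbc-set characterization of the HLSS condition rather than the $\al=\ell'$ formulation. Write $w'=\fl_U(w)$ and $I'=I(w')=I(w)\cap U$, and let $\phi'\colon 2^{I'}\to[e,w']$ be the surjection attached to $w'$. By Theorem \ref{T:HLSS} applied to $w'$, the restriction of $\phi'$ to $\nbc(I')$ is automatically injective, so it suffices to prove that this restriction is \emph{surjective} onto $[e,w']$; this is exactly the HLSS condition for $w'$ by Theorem \ref{T:hultman1}.

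First I would set up the bookkeeping. Choose a convex order on $I(w)$; by the preceding lemma its restriction to $I'$ is convex, and I use this induced order to define $\phi'$. Two compatibility facts then fall out. (i) Since the circuits (minimal dependent sets) of $I'$ are precisely the circuits of $I(w)$ lying in $U$, and the orders agree, every broken circuit of $I'$ is a broken circuit of $I(w)$; hence any $N\in\nbc(I(w))$ with $N\subseteq U$ already lies in $\nbc(I')$. (ii) If $B\subseteq I'$ then each reflection $t_\beta$ with $\beta\in B$ lies in $W_U$, so Lemma \ref{L:flatequiv} gives $\fl_U(\phi(B))=\phi'(B)$; concretely $\phi(B)=z\,w$ and $\phi'(B)=z\,w'$ for the common element $z=\prod_{\beta\in B}t_\beta\in W_U$.

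For surjectivity, fix $y\in[e,w']$. Using surjectivity of $\phi'$, write $y=\phi'(B)$ with $B\subseteq I'$, and put $z=y(w')^{-1}=\prod_{\beta\in B}t_\beta\in W_U$ and $x=\phi(B)=z\,w$. Because $\phi$ lands in $[e,w]$ we have $x\le w$, and since $w$ satisfies the HLSS condition, $\phi$ restricts to a bijection $\nbc(I(w))\to[e,w]$; let $N$ be the unique nbc-set with $\phi(N)=x$. The crux is to show $N\subseteq U$, and here Carter's theorem (\ref{T:carter}) does the work: $N$ is linearly independent, so $\ell'(xw^{-1})=|N|$ and the fixed space of $xw^{-1}$ is $\vspan(N)^{\perp}$; but $xw^{-1}=z\in W_U$ fixes $U^{\perp}$ pointwise, so $U^{\perp}\subseteq\vspan(N)^{\perp}$, that is $\vspan(N)\subseteq U$ and $N\subseteq U$. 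By (i) this gives $N\in\nbc(I')$, and by (ii), $\phi'(N)=\fl_U(\phi(N))=\fl_U(x)=z\,w'=y$. Thus $y$ lies in the image of $\phi'|_{\nbc(I')}$, which establishes surjectivity and hence the proposition.

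I expect the main obstacle to be precisely the step $N\subseteq U$: a priori the product $z$ of reflections being in $W_U$ does not force the individual roots of $N$ into $U$, and it is only the linear independence of nbc-sets combined with Carter's identification of the fixed space that pins $N$ down inside $U$. A secondary point to handle with care is that two Coxeter structures are in play, that of $W$ and that of the reflection subgroup $W_U$; this is harmless because absolute length is intrinsic — for $z\in W_U$ the codimension of the fixed space is the same whether computed in $U$ or in $V$ — so Carter's theorem may be applied in $W$ throughout.
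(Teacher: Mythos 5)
Your proof is correct and follows essentially the same route as the paper's: both lift $y\in[e,\fl_U(w)]$ to $x=zw\le w$, invoke the HLSS condition for $w$ to replace the representing set by an $\nbc$-set $N$, and use Carter's theorem on fixed-point spaces to force $\vspan(N)\subseteq U$, so that $N$ is an $\nbc$-set of $I(\fl_U(w))$ mapping to $y$. The only differences are cosmetic (you route the containment through $U^{\perp}\subseteq\mathrm{Fix}(z)$ and cite Lemma \ref{L:flatequiv} explicitly, while the paper compares $\vspan(N)^{\perp}$ with $\vspan(B)^{\perp}$ directly).
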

\begin{proof}
    Choose a convex order $\prec$ on $I(w)$, and take the induced convex order
    on $I(\fl_U(w))$. If $x \in \left[e,\fl_U(w)\right]$, we can always find $u
    = t_{\beta_1} \cdots t_{\beta_m}$, where $\beta_1 \prec \cdots \prec
    \beta_m$ in $I(\fl_U(w))$, such that $x = u\fl_U(w)$. We want to show that
    we can take $\{\beta_1,\ldots,\beta_m\}$ to be an $\nbc$-set with respect to
    the given convex order. Now $x' = u w$ is less than $w$ in the Bruhat order,
    and since the HLSS condition is satisfied for $w$ we can find an $\nbc$-set
    $\{\gamma_1,\ldots,\gamma_k\}$ such that $u = t_{\gamma_1} \cdots t_{\gamma_k}$,
    where $\gamma_1 \prec \cdots \prec \gamma_k$. Let $X$ denote the fixed point
    subspace of $u$. By Theorem \ref{T:carter}, $X$ is equal to 
    $\vspan\{\gamma_1,\ldots,\gamma_k\}^\perp$ and contains
    $\vspan\{\beta_1,\ldots,\beta_m\}^\perp$. It follows that
    $\vspan\{\gamma_1,\ldots,\gamma_k\} \subset \vspan\{\beta_1,\ldots,\beta_m\} \subset U$, and 
    hence $\{\gamma_1,\ldots,\gamma_k\}$ is an $\nbc$-set in $I(\fl_U(w))$ as
    desired. We conclude that the restriction of the map $\phi : 2^{\mcI(\fl_U(w))} \arr
    [e,\fl_U(w)]$ to $\nbc(\mcI(\fl_U(w)))$ is surjective. 
\end{proof}

Now we turn to freeness.  For the inversion hyperplane arrangement, flattening
is the same as localization up to a quotient.
\begin{lemma}\label{L:local}
    Suppose $U \subset V$, and let $U^\perp$ be the orthogonal complement to $U$. 
    Given $w \in W$, let
    \begin{equation*}
        X = \bigcap_{\alpha \in I(w) \cap R_U} \ker\alpha,
    \end{equation*}
    so that $U^\perp \subset X$.  Then the inversion hyperplane arrangement
    $\mcI(\fl_U(w))$ is equal to the localization $\mcI(w)_X / U^\perp$.
\end{lemma}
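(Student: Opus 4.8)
The plan is to unwind the three constructions in play---flattening, localization, and quotient---and check that they produce literally the same list of hyperplanes. The one subtlety to keep tracking throughout is the ambient space: $\mcI(w)$ and its localization $\mcI(w)_X$ live in $V$, whereas $\mcI(\fl_U(w))$ is an arrangement in $U$, which I will want to identify with $V/U^\perp$. I would begin by recording the two input facts. By the biconvexity discussion immediately preceding the lemma, $\fl_U(w)$ is the element of $W_U$ with $I(\fl_U(w)) = I(w) \cap R_U$ (here $I(w)\cap U = I(w)\cap R_U$ since inversions are roots). And because $V$ is identified with $V^*$, the flat $X = \bigcap_{\alpha \in I(w)\cap R_U}\ker\alpha$ is exactly the orthogonal complement of $\vspan(I(w)\cap R_U)$, so $X^\perp = \vspan(I(w)\cap R_U)$. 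Every $\alpha \in R_U$ lies in $U$, so $\vspan(I(w)\cap R_U)\subseteq U$, and therefore $U^\perp \subseteq (\vspan(I(w)\cap R_U))^\perp = X$, which establishes the first assertion of the lemma.

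Next I would pin down the localization $\mcI(w)_X$. A hyperplane $\ker\gamma$ with $\gamma\in I(w)$ contains $X$ if and only if $\gamma \perp X$, i.e. $\gamma \in X^\perp = \vspan(I(w)\cap R_U)$. The key observation is that this condition forces $\gamma$ into $R_U$: since $\gamma \in \vspan(I(w)\cap R_U)\subseteq U$ and $\gamma \in R$, we get $\gamma \in R\cap U = R_U$, hence $\gamma \in I(w)\cap R_U$. The reverse inclusion is immediate, so $\mcI(w)_X = \{\ker_V\gamma : \gamma \in I(w)\cap R_U\}$, indexed by precisely the roots in $I(\fl_U(w))$.

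Finally I would pass to the quotient. The center of $\mcI(w)_X$ is $X$, which contains $U^\perp$ by the first step, so $\mcI(w)_X / U^\perp$ is defined and lives in $V/U^\perp \iso U$. For $\gamma \in R_U\subseteq U$ one checks the decomposition $\ker_V\gamma = \ker_U\gamma \oplus U^\perp$ (writing $\beta = \beta_U + \beta_{U^\perp}$, the pairing $(\beta,\gamma)$ depends only on $\beta_U$ since $\gamma\in U$), so the image of $\ker_V\gamma$ in the quotient is $\ker_U\gamma$. Hence $\mcI(w)_X/U^\perp = \{\ker_U\gamma : \gamma\in I(w)\cap R_U\}$, which is by definition $\mcI(\fl_U(w))$, completing the argument.

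The $\perp$-manipulations and the splitting $\ker_V\gamma = \ker_U\gamma \oplus U^\perp$ are routine linear algebra. The step that demands the most care---and the main thing to get right---is the identification $V/U^\perp \iso U$ together with the matching of hyperplanes under it, i.e. verifying that localizing in $V$ and then quotienting by $U^\perp$ really recovers the inversion arrangement of $\fl_U(w)$ in its own ambient space $U$, rather than an arrangement still sitting inside $V$.
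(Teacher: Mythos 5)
Your proof is correct; the paper states Lemma \ref{L:local} without any proof, treating it as immediate from the definitions, and your argument is exactly the careful unwinding the author has in mind. The three steps --- identifying $X^\perp = \vspan(I(w)\cap R_U)$, observing that a root $\gamma\in I(w)$ with $\ker\gamma \supseteq X$ must lie in $R\cap U = R_U$, and splitting $\ker_V\gamma = \ker_U\gamma\oplus U^\perp$ to match hyperplanes under $V/U^\perp\iso U$ --- are all sound and complete.
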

If $U$ is spanned by $I(w) \cap U$, then $U^\perp$ is equal to $X$,
and $\mcI(\fl_U(w))$ is simply the quotient of $\mcI(w)_X$ by its center.
Conversely, given $X \in L(\mcI(w))$, if we let $U = \vspan\{\alpha \in I(w) :
X \subset \ker \alpha\}$ then $\mcI(\fl_U(w)) \iso \mcI(w)_X / X$. 

It is well-known that localization preserves freeness:
\begin{thm}[\cite{OT92}, Theorem 4.37]\label{T:localfree}
    If $\mcA$ is a free arrangement, and $X \in L(\mcA)$, then
    $\mcA_X$ is also free.
\end{thm}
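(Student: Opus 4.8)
The plan is to prove the statement by the standard localization principle for derivation modules, following Orlik--Terao. Write $S = \C[V_{\C}]$ for the coordinate ring, recall that $\Der(\mcA)$ consists of those $\theta \in \Der(S)$ with $\theta(\alpha_H) \in \alpha_H S$ for every $H \in \mcA$, and for the flat $X \in L(\mcA)$ let $\mfp \subset S$ be the prime ideal of polynomials vanishing on $X$.

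First I would establish the localization identity $\Der(\mcA)_{\mfp} = \Der(\mcA_X)_{\mfp}$ inside $\Der(S)_{\mfp}$. The point is that a hyperplane $H \in \mcA$ lies in the localization $\mcA_X$ exactly when $X \subseteq H$, i.e.\ when $\alpha_H \in \mfp$; for every $H \notin \mcA_X$ the form $\alpha_H$ does not vanish on $X$, so $\alpha_H$ is a unit in $S_{\mfp}$ and the condition $\theta(\alpha_H) \in \alpha_H S_{\mfp}$ is automatic. Thus after localizing at $\mfp$ the only surviving constraints are exactly those defining $\Der(\mcA_X)$, which gives the identity. Granting this, the main body is short: since $\mcA$ is free, $\Der(\mcA)$ is a free $S$-module, so its localization $\Der(\mcA)_{\mfp}$ is free over $S_{\mfp}$, and hence so is $\Der(\mcA_X)_{\mfp}$.

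It then remains to promote this local freeness at $\mfp$ to genuine freeness of the graded module $\Der(\mcA_X)$ over $S$. This is the step I expect to be the main obstacle, since local freeness at a non-maximal prime does not by itself force freeness. To handle it I would first reduce to the essential case. Every $H \in \mcA_X$ contains $X$, so $X$ lies in the center of $\mcA_X$; by the quotient-by-center fact recalled in Section \ref{SS:habackground}, $\mcA_X$ is free if and only if $\mcA_X / X$ is free. Choosing coordinates with $X = \{x_1 = \cdots = x_k = 0\}$, each $\alpha_H$ with $H \in \mcA_X$ is a form in $x_1,\dots,x_k$ alone, so $\mcA_X/X$ is an essential arrangement in $\C^k$ whose center is the origin, and the associated prime is now the irrelevant maximal ideal $\mfm$ of $\C[x_1,\dots,x_k]$.

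After this reduction, a faithfully flat descent along the polynomial extension transports the freeness of $\Der(\mcA_X)_{\mfp}$ into freeness of $\Der(\mcA_X/X)_{\mfm}$ over the local ring $\C[x_1,\dots,x_k]_{\mfm}$. Finally, because $\Der(\mcA_X/X)$ is a finitely generated graded module, graded Nakayama applies: freeness is detected by the vanishing of $\Tor_1$ against the residue field $\C = \C[x_1,\dots,x_k]/\mfm$, and since this $\Tor$ is supported at $\mfm$ it coincides with the corresponding $\Tor$ computed over $\C[x_1,\dots,x_k]_{\mfm}$, which vanishes by the local freeness just obtained. Hence $\Der(\mcA_X/X)$, and therefore $\Der(\mcA_X)$, is free, completing the proof.
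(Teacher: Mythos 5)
The paper does not prove this statement at all: it is quoted as a known result with the citation [OT92, Theorem 4.37], so there is no internal proof to compare against. Your argument is correct and is essentially the standard Orlik--Terao localization proof: the identity $\Der(\mcA)_{\mfp}=\Der(\mcA_X)_{\mfp}$, reduction to the essential arrangement $\mcA_X/X$, faithfully flat descent to the localization at the irrelevant maximal ideal, and graded Nakayama. The only point worth tightening is the descent step, where one should make explicit the module decomposition $\Der(\mcA_X)\cong\bigl(S\otimes_{S'}\Der(\mcA_X/X)\bigr)\oplus S^{n-k}$ underlying the quotient-by-center fact, so that freeness of $\Der(\mcA_X)_{\mfp}$ actually passes to the summand $\Der(\mcA_X/X)\otimes_{S'}S_{\mfp}$ before descending.
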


As an immediate consequence of Theorem \ref{T:localfree} and Lemma
\ref{L:local} we have:
\begin{cor}\label{C:flatfree}
    If $\mcI(w)$ is free, then so is $\mcI(\fl_U(w))$.
\end{cor}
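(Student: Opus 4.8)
The plan is to combine the two results that immediately precede the corollary, namely Theorem~\ref{T:localfree} and Lemma~\ref{L:local}, since the statement is explicitly advertised as an immediate consequence of them. First I would invoke Lemma~\ref{L:local}, which identifies the flattened arrangement $\mcI(\fl_U(w))$ with the localization $\mcI(w)_X/U^\perp$ of $\mcI(w)$ at the flat $X = \bigcap_{\alpha \in I(w) \cap R_U} \ker\alpha$. This reduces the freeness of $\mcI(\fl_U(w))$ to the freeness of a localization, up to quotienting by a subspace of the center.

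The second step is to apply Theorem~\ref{T:localfree}: since $\mcI(w)$ is assumed free and $X \in L(\mcI(w))$ is a flat, the localization $\mcI(w)_X$ is free. The only remaining point is to pass from $\mcI(w)_X$ to its quotient $\mcI(w)_X/U^\perp$. For this I would appeal to the fact, recorded in the background in Subsection~\ref{SS:habackground}, that an arrangement $\mcA$ is free if and only if its quotient $\mcA/Y$ by a subspace $Y$ of the center is free. Here $U^\perp$ is contained in $X$, which is the center of $\mcI(w)_X$, so the quotient is legitimate and preserves freeness. Chaining these observations gives that $\mcI(\fl_U(w)) = \mcI(w)_X/U^\perp$ is free.

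There is really no serious obstacle here; the corollary is a formal concatenation of three standard facts (localization preserves freeness, quotient by a central subspace preserves freeness, and the localization-equals-flattening identity of Lemma~\ref{L:local}). The one place to be careful is verifying the hypotheses of each cited result line up: that $X$ is genuinely a flat of $\mcI(w)$ (it is, being an intersection of hyperplanes in the arrangement) and that $U^\perp \subseteq X$ (which is asserted in the statement of Lemma~\ref{L:local}), so that the quotient $\mcI(w)_X/U^\perp$ is defined. Once these are checked the proof is a one-line deduction.

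\begin{proof}
By Lemma~\ref{L:local}, $\mcI(\fl_U(w)) = \mcI(w)_X/U^\perp$, where $X = \bigcap_{\alpha \in I(w)\cap R_U}\ker\alpha$ is a flat of $\mcI(w)$ and $U^\perp \subseteq X$. Since $\mcI(w)$ is free, Theorem~\ref{T:localfree} shows that the localization $\mcI(w)_X$ is free. As $U^\perp$ is a subspace of the center $X$ of $\mcI(w)_X$, and freeness is preserved under passing to the quotient by a subspace of the center, it follows that $\mcI(w)_X/U^\perp$ is free. Hence $\mcI(\fl_U(w))$ is free.
\end{proof}
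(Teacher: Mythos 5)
Your proof is correct and follows exactly the route the paper intends: the paper gives no separate proof, stating only that the corollary is an immediate consequence of Theorem~\ref{T:localfree} and Lemma~\ref{L:local}, and your argument (localize at $X$, apply preservation of freeness under localization, then quotient by the central subspace $U^\perp$) is precisely that deduction with the hypotheses checked.
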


Localization also preserves supersolvability:
\begin{lemma}\label{L:supersolvflat}
    If $\mcI(w)$ is supersolvable and $U \subset V$ then $\mcI(\fl_U(w))$
    is supersolvable.
\end{lemma}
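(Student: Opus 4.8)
The plan is to reduce the statement to the general fact that a localization of a supersolvable arrangement is supersolvable, and then to establish that fact by induction using the recursive characterization in Remark \ref{R:supersolv}. By Lemma \ref{L:local}, $\mcI(\fl_U(w))$ is the quotient $\mcI(w)_X / U^\perp$ of the localization $\mcI(w)_X$ by the subspace $U^\perp$ of its center. Quotienting an arrangement by a subspace of its center induces an isomorphism of intersection lattices and therefore preserves supersolvability, so it is enough to show that $\mcI(w)_X$ is supersolvable. I would thus prove the standalone claim: if $\mcA$ is supersolvable and $Y \in L(\mcA)$, then $\mcA_Y$ is supersolvable.

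Throughout I use the paper's convention that the center of $\mcA$ is the top element $\hat 1$ of $L(\mcA)$ and $V = \hat 0$, so that $L(\mcA_Y)$ is the lower interval $[\hat 0, Y]$ and $(\mcA_Y)_Z = \mcA_Z$ whenever $Z \le Y$. Write $r$ for the rank function and $l$ for the rank of $\mcA$. The argument is by induction on $l$; the cases $l \le 2$ are immediate since every arrangement of rank at most two is supersolvable. For the inductive step, Remark \ref{R:supersolv} provides a modular coatom $M$ of $L(\mcA)$ for which $\mcA_M$ is supersolvable of rank $l-1$. Fixing $Y \in L(\mcA)$, the goal is to produce a modular coatom of $[\hat 0, Y]$ whose further localization is supersolvable, and then to invoke Remark \ref{R:supersolv} once more.

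I would split on the position of $Y$ relative to $M$. If $Y \le M$, then $Y \in L(\mcA_M)$ and $\mcA_Y = (\mcA_M)_Y$ is supersolvable directly by the induction hypothesis applied to $\mcA_M$. If $Y \not\le M$, then $Y \vee M = \hat 1$ because $M$ is a coatom, and the modularity of $M$ gives $r(Y) + r(M) = r(Y \vee M) + r(Y \wedge M) = l + r(Y \wedge M)$, whence $r(Y \wedge M) = r(Y) - 1$ and $Y \wedge M \lessdot Y$ is a coatom of $[\hat 0, Y]$. Since $Y \wedge M \le M$, the induction hypothesis applied to $\mcA_M$ shows that $\mcA_{Y \wedge M} = (\mcA_M)_{Y \wedge M} = (\mcA_Y)_{Y \wedge M}$ is supersolvable.

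What remains, and what I expect to be the main obstacle, is to check that $Y \wedge M$ is modular inside $[\hat 0, Y]$; given this, Remark \ref{R:supersolv} yields that $\mcA_Y$ is supersolvable and closes the induction. I would verify modularity by checking the rank identity $r(Z) + r(Y \wedge M) = r(Z \vee (Y \wedge M)) + r(Z \wedge M)$ for every $Z \le Y$, using $Z \wedge (Y \wedge M) = Z \wedge M$. When $Z \le M$ this is immediate, since then $Z \le Y \wedge M$ and $Z \wedge M = Z$. When $Z \not\le M$, the element $Z \vee (Y \wedge M)$ lies strictly above the coatom $Y \wedge M$ of $[\hat 0, Y]$ and hence equals $Y$, so the identity reduces to $r(Z \wedge M) = r(Z) - 1$, which is exactly the modular-pair equation for $(Z, M)$ in $L(\mcA)$ together with $Z \vee M = \hat 1$. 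This completes the general claim and, with the reduction of the first paragraph, proves the lemma.
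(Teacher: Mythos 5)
Your argument is correct, but it takes a genuinely different route from the paper. You reduce the lemma to the general fact that localization at a flat preserves supersolvability (via Lemma \ref{L:local} and the observation that quotienting by a central subspace is a lattice isomorphism), and then prove that fact by induction on rank using the recursive modular-coatom characterization from Remark \ref{R:supersolv}: if $M$ is a modular coatom with $\mcA_M$ supersolvable, you show that either $Y \le M$ (and you are done by induction inside $\mcA_M$) or $Y \wedge M$ is a modular coatom of the interval $[\hat 0, Y]$ whose localization is again handled by induction; the rank computations verifying both the coatom and the modularity claims are right, using that lower intervals of a geometric lattice are geometric and that modularity is equivalent to the rank identity $r(Z)+r(M)=r(Z\vee M)+r(Z\wedge M)$ for all $Z$. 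The paper instead invokes the Bj\"orner--Ziegler criterion: $\mcA$ is supersolvable if and only if the defining forms can be ordered so that every minimal broken circuit has size $2$; since $I(w)\cap U$ is linearly closed in $I(w)$, its broken circuits in the induced order are exactly the broken circuits of $I(w)$ lying in $U$, so the property passes to $\mcI(\fl_U(w))$ in two lines. Your proof is longer but self-contained modulo standard geometric-lattice facts and does not require the broken-circuit characterization; the paper's is shorter but leans on the Bj\"orner--Ziegler theorem as a black box. Both establish the same underlying general statement that localization preserves supersolvability.
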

\begin{proof}
    By a theorem of Bj\"orner and Ziegler \cite{BZ91}, an arrangement $\mcA$
    is supersolvable if and only if there is an order on the set $\{\alpha_H :
    H \in \mcA\}$ of defining forms such that every minimal broken circuit with
    respect to inclusion has size $2$. 

    Given an order with this property on $I(w)$, take the induced order on
    $I(w) \cap U$. Since $\mcI(w) \cap U$ is linearly closed, a subset of $I(w)
    \cap U$ is a broken circuit if and only if it is a broken circuit in
    $I(w)$. Hence all minimal broken circuits in $I(w) \cap U$ have size $2$,
    and $\mcI(\fl_U(w))$ is supersolvable.
\end{proof}

\subsection{Proof of main results}\label{SS:proof}

Let $R$ and $R'$ be two root systems with Weyl groups $W(R)$ and $W(R')$
respectively, and let $w' \in W(R')$. An element $w \in W(R')$ is said to
contain the pattern $(w',R')$ if there is a subspace $U$ of the ambient space of
$R$ such that $R_{U}$ is isomorphic to $R'$, and $\fl_U(w) = w'$ when $R_{U}$
is identified with $R'$. If this does not happen for any subspace $U$, then $w$
is said to avoid $(w',R')$. This notion of root system pattern avoidance due to
Billey and Postnikov \cite{BP05} generalizes the usual notion of pattern
avoidance for permutations. 

The main result of \cite{BP05} is that an element $w \in W$ is rationally
smooth if and only if it avoids a finite list of patterns in the classical
types $A_3$, $B_3$, $C_3$, and $D_4$. In Table \ref{TBL:badpat} below, we list
each bad pattern $w'$, along with the Poincare polynomial $Q_{\mcI(w')}(t)$
(shown in factored form) of the inversion arrangement, the number
$Q_{\mcI(w')}(1)$ of $\nbc$-sets of the inversion arrangement, and the size of
the Bruhat interval $|[e,w']|$. 
\begin{table}[h]
    \begin{tabular}{lllll}
        $R'$ & $w'$ & $Q_{\mcI(w')}(t)\quad\quad$ & $|\nbc(\mcI(w'))|\quad$ & $|[e,w']|$ \\
        $A_3$ & $s_2 s_1 s_3 s_2$ & $(1+t)(1+ 3t + 3t^2)$ & 14 & 14 \\
        $A_3$ & $s_1 s_2 s_3 s_2 s_1$ & $(1+t)(1+2t)^2$ & 18 & 20 \\
        $D_4$ & $s_2 s_1 s_3 s_4 s_2$ & $(1+t)(1+2t)(1+2t+2t^2)$ & 30 & 30 \\
        $B_3/C_3$ & $s_2 s_1 s_3 s_2$ & $(1+t)(1+3t+3t^2)$ & 14 & 14 \\ 
        $B_3/C_3$ & $s_3 s_2 s_1 s_3 s_2$ & $(1+t)(1+4t+5t^2)$ & 20 & 20 \\ 
        $B_3/C_3$ & $s_2 s_1 s_3 s_2 s_3$ & $(1+t)(1+4t+5t^2)$ & 20 & 20 \\
        $B_3/C_3$ & $s_3 s_2 s_1 s_3 s_2 s_3$ & $(1+t)(1+5t+7t^2)$ & 26 & 26 \\
        $B_3/C_3$ & $s_3 s_2 s_1 s_2 s_3$ & $(1+t)(1+2t)^2$ & 18 & 20 \\
        $B_3/C_3$ & $s_2 s_3 s_2 s_1 s_2 s_3$ & $(1+t)(1+5t+7t^2)$ & 26 & 28 \\
        $B_3/C_3$ & $s_3 s_2 s_1 s_2 s_3 s_2$ & $(1+t)(1+5t+7t^2)$ & 26 & 28 \\
        $B_3/C_3$ & $s_2 s_3 s_2 s_1 s_2 s_3 s_2$ & $(1+t)(1+6t+10t^2)$ & 34 & 36 \\
        $B_3/C_3$ & $s_1 s_2 s_3 s_2 s_1$ & $(1+t)(1+2t)^2$ & 18 & 20 \\
        $B_3/C_3$ & $s_1 s_2 s_3 s_2 s_1 s_3$ & $(1+t)(1+2t)(1+3t)$ & 24 & 28 \\
        $B_3/C_3$ & $s_1 s_2 s_3 s_2 s_1 s_2 s_3$ & $(1+t)(1+3t)^2$ & 32 & 36 \\ 
        $B_3/C_3$ & $s_1 s_2 s_3 s_2 s_1 s_3 s_2$ & $(1+t)(1+3t)^2$ & 32 & 36 \\
        $B_3/C_3$ & $s_1 s_2 s_3 s_2 s_1 s_2 s_3 s_2$ & $(1+t)(1+3t)(1+4t)$ & 40 & 42 \\
        $B_3/C_3$ & $s_1 s_2 s_3 s_2 s_1 s_3 s_2 s_3$ & $(1+t)(1+3t)(1+4t)$ & 40 & 44 \\ 
    \end{tabular}
    \caption{Patterns characterizing rational smoothness.}
    \label{TBL:badpat}
\end{table}
Since rational smoothness depends only on the Coxeter system, not on the choice
of root system, the patterns for types $B_3$ and $C_3$ come in pairs $(w',
B_3)$ and $(w',C_3)$. If we take $R = B_n$, then $C_n$ is the dual root system
$R^{\vee} = \{\alpha^{\vee}: \alpha \in R\}$, where $\alpha^{\vee} = 2 \alpha /
(\alpha,\alpha)$. From this fact, it is not hard to see that the inversion
arrangement of an element $w \in W(B_n)$ is isomorphic to the inversion
arrangement of the same element in $W(C_n)$. Thus we include only one line for
each pair of patterns in $B_3$ and $C_3$.  The elements $w'$ are written using
the following Dynkin diagram labelling, taken from \cite{BP05}:
\begin{align*}
    A_3:&\quad\xymatrix{ 1 \ar@{-}[r] & 2 \ar@{-}[r] & 3} \\
    B_3/C_3:&\quad\xymatrix{ 1 \ar@{-}[r] & 2 \ar@2{-}[r] & 3}  \\
    D_4:&\quad \vcenter{\xymatrix{              & 3 \ar@{-}[d] &          \\
                                1 \ar@{-}[r] & 2 \ar@{-}[r] & 4  }}
\end{align*}

\begin{proof}[Proof of Theorem \ref{T:main}]
    Suppose $w$ is rationally smooth. Then by Theorem \ref{T:main1}, 
    $\mcI(w)$ is free with coexponents equal to the exponents of $\mcI(w)$.
    It follows that $Q_{\mcI(w)}(t) = \prod_i (1+m_i t)$, so the number of
    $\nbc$-sets is $Q_{\mcI(w)}(1) = \prod_i (1+m_i) = P_w(1) = |[e,w]|$.

    Conversely, suppose that $\mcI(w)$ is free with coexponents $d_1,\ldots,
    d_l$, and $\prod_i (1+d_i) = |[e,w]|$. The product $\prod_i (1+d_i)$ is the
    number of $\nbc$-sets, so $w$ satisfies the HLSS condition. By Proposition
    \ref{P:hultmanflatten} and Corollary \ref{C:flatfree}, if $w$ contains
    $(w', R')$ then $\mcI(w')$ is free and $w'$ satisfies the HLSS condition.
    For every element $w'$ listed in Table \ref{TBL:badpat}, either the
    Poincare polynomial $Q_{\mcI(w')}(t)$ does not factor into linear factors
    over $\Z$, implying that $\mcI(w')$ is not free, or the number of
    $\nbc$-sets is less than $|[e,w']|$, implying that $w'$ does not satisfy
    the HLSS condition. Hence if $\mcI(w)$ is free and $w$ satisfies the HLSS
    condition, then $w$ cannot contain any of the patterns listed in Table
    \ref{TBL:badpat}, so $w$ is rationally smooth.
\end{proof}

\begin{proof}[Proof of Theorem \ref{T:supersolv}]
    As mentioned in Remark \ref{R:supersolv}, if $w$ has a complete chain
    decomposition, then $\mcI(w)$ is supersolvable. Conversely, suppose that
    $w$ is rationally smooth and $\mcI(w)$ is supersolvable. The inversion
    arrangements of the maximal elements of type $D_4$ and $F_4$ are not
    supersolvable \cite{HR13}. If $w$ is the maximal elements of type $D_n$ or
    $E_n$, or one of the elements $w_{kl}^{-1}$, $5 \leq l < k \leq 8$, then
    there is a subset $J \subset S$ such that $\fl_{V_J}(w)$ is the maximal element
    in $D_4$. By Lemma \ref{L:supersolvflat}, this cannot happen if $\mcI(w)$
    is supersolvable. Since $\mcI(w_{kl})$ is linearly equivalent to
    $\mcI(w^{-1}_{kl})$, $w$ cannot be one of the elements $w_{kl}$. By Theorem
    \ref{T:chainbpexist}, we conclude that $w$ has a right or left chain BP
    decomposition $w = vu$ or $w = uv$ with respect to some $J$.  Now $u$ is
    rationally smooth and $\mcI(u)$ is supersolvable by Lemma
    \ref{L:supersolvflat}, so we can repeat this process with $u$ to show that
    $w$ has a complete chain BP decomposition.
\end{proof}

\section{Further directions}\label{S:further}
The positive roots $R^+$ of $R = A_n$ can be realized as $\{e_i - e_j : 1 \leq
i < j \leq n+1\}$, where $e_1,\ldots,e_{n+1}$ is a standard basis of $\R^n$.
Subsets of $R^+$ can be identified with graphs on vertex set $1,\ldots,n+1$ by
identifying $e_i - e_j$ with the edge $ij$. For this reason, a union of
hyperplanes $H = \ker \alpha$, $\alpha$ a positive root of $A_n$, is called a
graphic arrangement. A theorem of Stanley states that a graphic arrangement is
free if and only if the corresponding graph is chordal, meaning that every
cycle of length $\geq 4$ has a chord \cite{ER94}.
\begin{lemma}\label{L:chordless}
    If $I$ is a biconvex set in $A_n$, then every chordless cycle in the
    corresponding graph has length $\leq 4$. 
\end{lemma}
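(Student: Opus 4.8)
The plan is to reduce biconvexity to two purely combinatorial closure conditions on the graph and then eliminate long induced cycles by an extremal argument anchored at the smallest vertex label. First I would record the only additive relation among positive roots of $A_n$: a positive root is a sum of two positive roots exactly when $(e_a - e_b) + (e_b - e_c) = e_a - e_c$ with $a<b<c$, which in the graph $G$ associated to $I$ corresponds to the three edges $ab$, $bc$, $ac$ of the triangle on $a<b<c$. Unwinding the definitions of \emph{convex} and \emph{coconvex}, biconvexity of $I$ is equivalent to the following two conditions, for every triple $a<b<c$: (C) if $ab,bc \in G$ then $ac \in G$; and (CC) if $ac \in G$ then $ab \in G$ or $bc \in G$. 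From this point on the argument is entirely combinatorial.

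Next I would suppose, toward a contradiction, that $G$ contains an induced (chordless) cycle $w_0 w_1 \cdots w_{m-1}$ with $m \geq 5$, so its only edges are the consecutive pairs $w_i w_{i+1}$ with indices read mod $m$. After cyclically rotating the labels I may take $w_0$ to be the smallest integer appearing on the cycle, and after possibly reversing the orientation I may assume $w_1 < w_{m-1}$. The first step is to pin down $w_2$ using (C): since $w_0$ is the global minimum, if we had $w_1 < w_2$ then $w_0 < w_1 < w_2$ with $w_0 w_1, w_1 w_2 \in G$, so (C) would force the edge $w_0 w_2$, which is impossible because $w_0$ and $w_2$ are non-adjacent on an induced cycle of length $\geq 4$. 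Hence $w_2 < w_1$, and therefore $w_0 < w_2 < w_1 < w_{m-1}$.

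The second step applies (CC) to the triple $w_0 < w_2 < w_{m-1}$. Here $w_0 w_{m-1}$ is a genuine cycle edge, so (CC) demands that $w_0 w_2 \in G$ or $w_2 w_{m-1} \in G$. But both of these are chords: on the cycle $w_2$ is adjacent only to $w_1$ and $w_3$, and when $m \geq 5$ neither $w_0$ nor $w_{m-1}$ equals $w_1$ or $w_3$, while $w_2$ is also not a neighbour of $w_0$. This contradiction rules out induced cycles of length $\geq 5$, giving the lemma; note that triangles and $4$-cycles remain permitted, as they should be.

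The subtle point — and the reason the bound is $4$ rather than $3$ — is precisely the excluded case $m=4$, where $w_{m-1} = w_3$ is a cycle-neighbour of $w_2$, so that $w_2 w_{m-1}$ is a real edge and (CC) is satisfied with no contradiction. The whole difficulty of the proof is therefore concentrated in arranging the indices so that $w_2 w_{m-1}$ is forced to be a chord, which is exactly where $m \geq 5$ enters; choosing the triple $(w_0, w_2, w_{m-1})$ anchored at the global minimum is the one genuinely clever step. It is worth emphasizing that condition (C) alone is not enough: it only shows that the labels around the cycle alternate between local minima and local maxima (so $m$ must be even), which does not by itself exclude $C_6, C_8, \dots$, and it is the coconvexity condition (CC) that does the essential work.
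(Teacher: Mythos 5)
Your proof is correct and is essentially the paper's own argument: both anchor at the minimum vertex of the cycle, use convexity to force the second neighbour of the smaller adjacent vertex to be a "descent" (your step showing $w_2<w_1$ is the paper's case $m>j$), and then use coconvexity on the resulting increasing triple against the cycle edge at the minimum vertex (your step 2 is the paper's case $m<j$). The only difference is presentational — you extract the graph-theoretic conditions (C) and (CC) explicitly before arguing — so there is nothing to add.
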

\begin{proof}
    Suppose $C$ is a cycle of length $\geq 5$ in the graph $G$ corresponding to
    $I$. Let $i$ be the smallest vertex in $C$, and let $j < k$ be the two
    vertices adjacent to $i$ in $C$. Finally, let $m \neq k$ be the other
    vertex adjacent to $j$ in $C$. Since $C$ has length $\geq 5$, $m,j,i,k$ are
    all distinct, and any edge between these vertices which is not already in
    $C$ will be a chord. 

    If $m > j$, then the root corresponding to edge $jm$ is $e_j - e_m$, so
    $e_j - e_m \in I$. Similarly $e_i - e_j \in I$. Since $i<m$, $e_i - e_m =
    (e_i - e_j) + (e_j - e_m)$ is a positive root, and since $I$ is biconvex,
    $e_i - e_m \in I$. Hence $C$ has a chord in $G$. 
    
    If $m < j$, then $e_m - e_k$ is a positive root. Since $I$ is coconvex and 
    $e_i - e_k = (e_m - e_k) + (e_i - e_m)$ belongs to $I$, one of $e_m - e_k$
    or $e_i - e_m$ must belong to $I$, and again $C$ will have a chord in $G$.
\end{proof}
If $R$ is of type $A$, then any subsystem $R_U$ also has type $A$ (or is a product
of subsystems of type $A$). Hence pattern avoidance conditions in type $A$ are
usually written in terms of permutations.  For example, Billey and Postnikov's
root system pattern avoidance criterion simplifies in type $A$ to the earlier
pattern avoidance criterion of Lakshmibai and Sandhya:
\begin{thm}[\cite{LS90}]\label{T:typeArs}
    If $w$ is a permutation in $A_n$, then $w$ is rationally smooth if and only
    if $w$ avoids the permutations $3412$ and $4231$.
\end{thm}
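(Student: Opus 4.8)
The plan is to prove Theorem~\ref{T:typeArs} by showing that the root-system pattern avoidance criterion of Billey and Postnikov, already invoked in this section, specializes to the classical Lakshmibai–Sandhya conditions when the root system has type $A$. By the main result of \cite{BP05} recalled above, a permutation $w \in A_n$ is rationally smooth if and only if it avoids all the type-$A$ patterns from Table \ref{TBL:badpat}, namely the two bad patterns $(s_2 s_1 s_3 s_2, A_3)$ and $(s_1 s_2 s_3 s_2 s_1, A_3)$. (The patterns listed for $B_3/C_3$ and $D_4$ never arise inside a type-$A$ root system, since every subsystem $R_U$ of $A_n$ is again a product of type-$A$ systems, so only the two $A_3$ patterns are relevant.) So it suffices to translate these two abstract root-system patterns into permutation patterns, and then to identify the resulting permutation patterns with $3412$ and $4231$.

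\emph{First} I would pin down the dictionary between root-system pattern containment and permutation pattern containment in type $A$. Realizing $R^+$ of $A_n$ as $\{e_i - e_j : i < j\}$ as in the start of this section, a subspace $U$ for which $R_U \iso A_3$ is spanned by the roots indexed by some $4$-element subset of coordinates $\{c_1 < c_2 < c_3 < c_4\}$; the flattening $\fl_U(w)$ is then exactly the order-isomorphism type of the subword of the one-line notation of $w$ on positions $c_1,\ldots,c_4$, i.e. the classical notion of pattern containment for permutations. This is the sense in which Billey–Postnikov root-system pattern avoidance ``generalizes the usual notion of pattern avoidance for permutations,'' as stated in the paper; I would make this identification precise so that $\fl_U(w) = w'$ becomes ``the pattern $w'$ occurs in $w$ in the permutation sense.''

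\emph{Second} I would compute the one-line notation of the two $A_3$ bad patterns. Using the labelling $s_i = (i\ \ i+1)$ and the convention $w' = s_{i_1} \cdots s_{i_k}$ acting on $\{1,2,3,4\}$, one multiplies out the reduced words: $s_2 s_1 s_3 s_2$ gives the permutation written $3412$ in one-line notation, and $s_1 s_2 s_3 s_2 s_1$ gives $4231$. These are short finite computations in $S_4$ that I would carry out explicitly and record. Once the two abstract patterns are identified with the concrete permutations $3412$ and $4231$, avoidance of the former in the root-system sense is literally avoidance of the latter in the permutation sense, and the theorem follows.

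The main obstacle is conventions rather than mathematics: I must be careful that the action convention (whether $s_{i_1}\cdots s_{i_k}$ is read left-to-right or right-to-left, and whether we multiply on the left or the right) and the correspondence between simple roots and edges are consistent with those used in Table \ref{TBL:badpat}, so that the two reduced words really produce $3412$ and $4231$ and not, say, their inverses. Since $3412$ and $4231$ are each their own inverse as permutations, this particular ambiguity is harmless here, which is a convenient feature of these two patterns; nonetheless I would verify the computation in whichever convention the paper fixes. Modulo this bookkeeping, the proof is a direct specialization of \cite{BP05} together with the observation that type-$A$ subsystems force only the $A_3$ rows of Table \ref{TBL:badpat} to be relevant.
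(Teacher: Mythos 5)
Your derivation is correct: in type $A$ every subsystem $R_U$ is a product of type-$A$ factors, so only the two $A_3$ rows of Table \ref{TBL:badpat} can occur, and multiplying out the reduced words $s_2s_1s_3s_2$ and $s_1s_2s_3s_2s_1$ does give $3412$ and $4231$ (both involutions, so the left/right and position/value convention issues you flag are indeed harmless). The paper itself gives no proof of Theorem \ref{T:typeArs} --- it is cited to \cite{LS90} --- but the sentence immediately preceding it asserts exactly the specialization of \cite{BP05} that you carry out, so your argument is a faithful expansion of the route the paper gestures at.
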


Using Lemma \ref{L:chordless}, it is easy to give a pattern characterization of
freeness in type $A$:
\begin{prop}\label{P:typeAfree}
    If $w$ is a permutation in $A_n$, then $\mcI(w)$ is free if
    and only if $w$ avoids the permutation $3412$.
\end{prop}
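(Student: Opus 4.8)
The plan is to combine Stanley's theorem (\cite{ER94}) with Lemma \ref{L:chordless}. Write $G$ for the graph on vertices $\{1,\dots,n+1\}$ having an edge $ij$ (with $i<j$) whenever $e_i-e_j\in I(w)$, so that $\mcI(w)$ is exactly the graphic arrangement of $G$. By Stanley's theorem, $\mcI(w)$ is free if and only if $G$ is chordal, i.e. if and only if $G$ has no induced (chordless) cycle of length $\geq 4$. Since $I(w)$ is biconvex, Lemma \ref{L:chordless} guarantees that every chordless cycle of $G$ has length $\leq 4$. Thus the only possible obstruction to chordality is an induced $4$-cycle, and the proposition reduces to the claim that $G$ contains an induced $4$-cycle if and only if $w$ contains the pattern $3412$.

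Next I would translate edges into inversions. From $I(w)=\{\alpha\in R^+ : w^{-1}\alpha\in R^-\}$ and $w^{-1}(e_i-e_j)=e_{w^{-1}(i)}-e_{w^{-1}(j)}$, one sees that $e_i-e_j\in I(w)$ (for $i<j$) precisely when $w^{-1}(i)>w^{-1}(j)$. Hence $G$ is the inversion graph of the permutation $w^{-1}$: two positions $i<j$ are adjacent exactly when they form an inversion of $w^{-1}$. Whether four positions $a<b<c<d$ induce a given graph depends only on the relative order of $w^{-1}(a),\dots,w^{-1}(d)$, that is, only on the length-$4$ pattern that those values form.

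The key step is to identify which length-$4$ patterns have inversion graph equal to the $4$-cycle. A $4$-cycle is triangle-free and has exactly four edges; in the inversion graph a triangle is a decreasing subsequence of length $3$ (a $321$ pattern) and an edge is an inversion, while $C_4$ is the unique triangle-free graph on four vertices with four edges. So I would check the five patterns in $S_4$ with exactly four inversions, namely $2431,\,3241,\,3412,\,4132,\,4213$, and observe that only $3412$ is $321$-avoiding; the other four each contain a decreasing triple and induce a triangle-with-pendant rather than a $4$-cycle. Therefore four positions induce a $4$-cycle in $G$ exactly when the corresponding values of $w^{-1}$ form the pattern $3412$, i.e. exactly when $w^{-1}$ contains $3412$.

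Finally I would assemble the pieces. The graph $G$ has an induced $4$-cycle if and only if $w^{-1}$ contains $3412$, and since $3412$ is an involution, $w^{-1}$ contains $3412$ if and only if $w$ contains $3412$. Combining this with the first paragraph, $\mcI(w)$ is free if and only if $G$ is chordal, if and only if $G$ has no induced $4$-cycle, if and only if $w$ avoids $3412$. The only genuine content is the finite pattern check in the third paragraph, which is the crux of the argument; the remaining steps are direct applications of Stanley's theorem, Lemma \ref{L:chordless}, and the dictionary between inversions of $w^{-1}$ and edges of $G$, so I anticipate no serious obstacle.
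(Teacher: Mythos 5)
Your proof is correct and follows essentially the same route as the paper: Stanley's theorem combined with Lemma \ref{L:chordless} reduces freeness of $\mcI(w)$ to the absence of induced $4$-cycles, and a finite check over four positions identifies $3412$ as the unique source of such a cycle. The only difference is cosmetic --- the paper carries out the reduction to four positions via the flattening map (localization at rank-$3$ subspaces) and inspects inversion sets in $A_3$, whereas you work directly with the inversion graph of $w^{-1}$ and classical pattern containment; your enumeration of the five $4$-inversion patterns and the observation that only $3412$ is $321$-avoiding (so only it yields a triangle-free, hence $C_4$, induced subgraph) is a correct and self-contained way to do the same finite check.
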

\begin{proof}
    Let $G$ be the graph corresponding to $I(w)$.  By Stanley's theorem and
    Lemma \ref{L:chordless}, $\mcI(w)$ is free if and only if every
    $4$-cycle in $G$ has a chord. If $U$ is a subspace of the ambient space,
    let $G_U$ denote the subgraph of $G$ with edge set $\{ij : e_i - e_j \in
    I(w) \cap U\}$. Up to some disconnected vertices, $G_U$ is the graph
    corresponding to $I(\fl_U(w))$. If $C$ is a $4$-cycle, then the space $U =
    \vspan\{e_i - e_j : ij \in C\}$ has dimension $3$, so $\mcI(w)$ is free if
    and only if $\mcI(\fl_U(w))$ is free for all subspaces $U$ of rank $3$. 

    The inversion set $I(3412) = \{e_2-e_3, e_1-e_3, e_2-e_4, e_1-e_4\}$, and
    the corresponding graph is the $4$-cycle $2-3-1-4$. It is not hard to check
    that this is the only inversion set in $A_3$ with a chordless $4$-cycle.
\end{proof}
Note that the reduced word for $3412$ is $s_2 s_1 s_3 s_2$. Proposition
\ref{P:typeAfree} makes it clear that the freeness of $\mcI(w)$ is not
equivalent to rational smoothness. For example, in $A_3$ the permutation $4231$
has reduced word $s_1 s_2 s_3 s_2 s_1$ and inversion set $\{e_1 - e_2, e_3 -
e_4, e_1 - e_3, e_2 - e_4, e_1-e_4\}$, which corresponds to the complete graph
minus an edge.  Hence $\mcI(4231)$ is free with exponents $1, 2, 2$ (see Table
\ref{TBL:badpat}).

In type $A$, the HLSS condition can also be can also be characterized by
pattern avoidance:
\begin{thm}[\cite{HLSS09}]\label{T:typeAHLSS}
    If $w$ is a permutation in type $A_n$, then $w$ satisfies the HLSS
    condition if and only if $w$ avoids $4231$, $35142$, $42513$, and $351624$.
\end{thm}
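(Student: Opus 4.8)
The plan is to prove the two implications separately, using Hultman's reformulation (Theorem \ref{T:hultman1}) that $w$ satisfies the HLSS condition exactly when $\al(u,w)=\ell'(uw^{-1})$ for every $u\le w$, together with the inequality $Q_{\mcI(w)}(1)=|\nbc(\mcI(w))|\le|[e,w]|$ coming from the injectivity of $\phi$ on $\nbc$-sets (Theorem \ref{T:HLSS}), which becomes equality precisely under the HLSS condition. The forward implication (HLSS $\Rightarrow$ avoidance) will be routine, while the converse (avoidance $\Rightarrow$ HLSS) is the genuine work.

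For the forward direction the key input is that the HLSS condition passes to flattenings (Proposition \ref{P:hultmanflatten}): if $w$ contains $(w',A_m)$ then $w'=\fl_U(w)$ for some $U$, so HLSS for $w$ forces HLSS for $w'$. Contrapositively, it suffices to check that each of $4231$, $35142$, $42513$, $351624$ violates HLSS, and since these live in $S_4$, $S_5$, $S_6$ this is a finite computation comparing $|[e,w']|$ with $|\nbc(\mcI(w'))|$. For $4231=s_1s_2s_3s_2s_1$ the values $18<20$ are already recorded in Table \ref{TBL:badpat}, and the remaining three are checked the same way. As a consistency check against the smoothness criterion (Theorem \ref{T:typeArs}), note that $3412=s_2s_1s_3s_2$ \emph{does} satisfy HLSS, with $14=14$ in Table \ref{TBL:badpat}; this is exactly why $3412$ is absent from the present list even though $4231$ persists.

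For the converse, Proposition \ref{P:hultmanflatten} shows that the class of HLSS-failing permutations is closed upward under pattern containment, so it is precisely the class of permutations containing some element of a well-defined list of minimal forbidden patterns, and the theorem asserts this list is $\{4231,35142,42513,351624\}$. That each of the four is \emph{minimal} — bad itself, but with every proper pattern good — is a finite verification over small symmetric groups. The substance is that there are no further minimal forbidden patterns, equivalently that every minimal forbidden pattern has support of size at most $6$. To bound the support I would take a minimal bad $w$ and, via Hultman's criterion, fix $u\le w$ witnessing $\al(u,w)>\ell'(uw^{-1})$; by Theorem \ref{T:carter} the discrepancy must come from a linear dependence among the inversions realizing $u$, i.e.\ a broken circuit of $I(w)$. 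In type $A$ these inversions are edges $e_a-e_b$ of the graph attached to $I(w)$, minimal dependencies are cycles, and Lemma \ref{L:chordless} forces the relevant chordless cycles to have length at most $4$, so the obstruction is supported on a bounded configuration of rows and columns; minimality of $w$ then forces $w$ itself to be supported there, reducing the claim to a finite enumeration.

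The step I expect to be genuinely hard is the sharp size bound — showing that an irreducible HLSS obstruction is supported on at most six indices. Lemma \ref{L:chordless} and Proposition \ref{P:typeAfree} control the graphic (free/non-free) part of the picture, but not the finer gap between the number of regions and $|[e,w]|$ that the HLSS condition measures, so additional bookkeeping on how broken circuits fail to be resolvable into $\nbc$-sets is needed. An attractive alternative that avoids this bound is to route through geometry: in type $A$ the HLSS condition is equivalent to $X(w)$ being \emph{defined by inclusions} \cite{HLSS09}, and the defined-by-inclusions property is characterized by avoidance of exactly these four patterns by Gasharov and Reiner \cite{GR02}; assembling these two independently established equivalences yields the theorem, which is how it is obtained in \cite{HLSS09}.
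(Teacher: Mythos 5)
The paper does not prove this statement at all: it is imported from \cite{HLSS09}, where (as the introduction of this paper notes) it is obtained by showing that in type $A$ the HLSS condition is equivalent to the Schubert variety being defined by inclusions, and then invoking Gasharov and Reiner's pattern characterization of that property \cite{GR02}. Your final paragraph therefore already matches the paper's actual treatment, and as an assembly of citations it is sound. Your forward direction is also fine: Proposition \ref{P:hultmanflatten} makes the HLSS-satisfying permutations a pattern-avoidance class, and the finite check of the four patterns (the $18<20$ entry for $4231$ in Table \ref{TBL:badpat}, plus analogous computations for the three longer patterns) does the rest; your remark that $3412$ satisfies HLSS with $14=14$ correctly explains why it is absent from this list despite appearing in the smoothness criterion.

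Your attempted \emph{direct} proof of the converse, however, has a genuine gap at exactly the point you flag: bounding the support of a minimal HLSS obstruction by $6$. The mechanism you propose --- Carter's theorem locates the discrepancy in a linear dependence among inversions, minimal dependences in type $A$ are cycles, and Lemma \ref{L:chordless} bounds chordless cycles by length $4$ --- cannot work as stated, because HLSS failure is not visible in the cycle structure of the inversion graph. The permutation $4231$ is the cleanest counterexample: its inversion graph is chordal (the complete graph on $\{1,2,3,4\}$ minus the edge $23$), so $\mcI(4231)$ is free with coexponents $1,2,2$, yet $|\nbc(\mcI(4231))|=18<20=|[e,4231]|$. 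The obstruction is the gap between the number of $\nbc$-sets and the size of the Bruhat interval, about which Lemma \ref{L:chordless} and Proposition \ref{P:typeAfree} say nothing; the ``additional bookkeeping'' you defer is the entire content of the hard direction. To complete the argument one must either carry out that analysis in full (which is effectively what \cite{HLSS09} does via the defined-by-inclusions reformulation and \cite{GR02}) or simply cite it, as the paper does.
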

The patterns $35142$, $42513$, and $351624$ all contain the pattern $3412$. 
By combining Theorem \ref{T:typeArs}, Proposition \ref{P:typeAfree}, and
Theorem \ref{T:typeAHLSS}, we get another proof that $w$ is rationally smooth
if and only if $\mcI(w)$ is free and $w$ satisfies the HLSS condition.
However, this approach does not show that the coexponents and exponents agree.

Proposition \ref{P:hultmanflatten} and Corollary \ref{C:flatfree} suggest that
it should be possible to extend Proposition \ref{P:typeAfree} and Theorem
\ref{T:typeAHLSS} to arbitrary finite type. The problem of finding a pattern
avoidance criterion for freeness is addressed in \cite{Sl13b}. This
leaves:
\begin{problem}
    Find a root system pattern avoidance criterion for the HLSS condition
    in arbitrary finite type.
\end{problem}

Finally, we have the following question:
\begin{question}
    Do Theorems \ref{T:exponents}, \ref{T:main} and \ref{T:supersolv} hold in
    arbitrary (finite) Coxeter groups? In particular, does the Poincare
    polynomial $Q_{\mcI(w)}(t)$ have a linear factorization when $P_w(q)$ is
    palindromic?
\end{question}

\bibliographystyle{amsalpha}
\bibliography{palindromic}

\providecommand{\bysame}{\leavevmode\hbox to3em{\hrulefill}\thinspace}
\providecommand{\MR}{\relax\ifhmode\unskip\space\fi MR }
\providecommand{\MRhref}[2]{%
  \href{http://www.ams.org/mathscinet-getitem?mr=#1}{#2}
}
\providecommand{\href}[2]{#2}
\begin{thebibliography}{HLSS09}

\bibitem[AC12]{AC12}
Ersan Akyildiz and James~B. Carrell, \emph{Betti numbers of smooth {S}chubert
  varieties and the remarkable formula of {K}ostant, {M}acdonald, {S}hapiro,
  and {S}teinberg}, Michigan Math. J. \textbf{61} (2012), no.~3, 543--553.

\bibitem[Arn79]{Ar79}
V.~I. Arnold, \emph{Indices of singular points of 1-forms on a manifold with
  boundary, convolution of invariants of reflection groups, and singular
  projections of smooth surfaces}, {R}ussian {M}ath. {S}urveys \textbf{34}
  (1979), 1--42.

\bibitem[BB03]{BB03}
Sara~C. Billey and Tom Braden, \emph{Lower bounds for {Kazhdan}-{Lusztig}
  polynomials from patterns}, Transformation Groups \textbf{8} (2003), no.~4,
  321--332.

\bibitem[BB05]{BB05}
Anders Bj{\"o}rner and Francesco Brenti, \emph{Combinatorics of {C}oxeter
  groups}, Graduate Texts in Mathematics, vol. 231, Springer, New York, 2005.

\bibitem[BC12]{BC12}
Mohamed Barakat and Michael Cuntz, \emph{Coxeter and crystallographic
  arrangements are inductively free}, Advances in Mathematics \textbf{229}
  (2012), no.~1, 691--709.

\bibitem[Bil98]{Bi98}
Sara~C. Billey, \emph{Pattern avoidance and rational smoothness of {S}chubert
  varieties}, Adv. Math. \textbf{139} (1998), no.~1, 141--156.

\bibitem[BL00]{BL00}
Sara Billey and V.~Lakshmibai, \emph{Singular loci of {S}chubert varieties},
  Progress in Mathematics, vol. 182, Birkh\"auser Boston Inc., Boston, MA,
  2000.

\bibitem[BP99]{BP99}
Michel Brion and Patrick Polo, \emph{Generic singularities of certain
  {S}chubert varieties}, Math. Z. \textbf{231} (1999), no.~2, 301--324.

\bibitem[BP05]{BP05}
Sara Billey and Alexander Postnikov, \emph{Smoothness of {S}chubert varieties
  via patterns in root subsystems}, Adv. in Appl. Math. \textbf{34} (2005),
  no.~3, 447--466.

\bibitem[BZ91]{BZ91}
Anders Bj{\"o}rner and G{\"u}nter~M Ziegler, \emph{Broken circuit complexes:
  Factorizations and generalizations}, Journal of Combinatorial Theory, Series
  B \textbf{51} (1991), no.~1, 96--126.

\bibitem[Car72]{Ca72}
R.~W. Carter, \emph{Conjugacy classes in the {W}eyl group}, Compositio Math.
  \textbf{25} (1972), 1--59.

\bibitem[CK03]{CK03}
James~B. Carrell and Jochen Kuttler, \emph{Smooth points of {$T$}-stable
  varieties in {$G / B$} and the {P}eterson map}, Inventiones Mathematicae
  \textbf{151} (2003), no.~2, 353--379.

\bibitem[Deo77]{De77}
Vinay~V. Deodhar, \emph{Some characterizations of {B}ruhat ordering on a
  {C}oxeter group and determination of the relative {M}{\"o}bius function},
  Inventiones mathematicae \textbf{39} (1977), no.~2, 187--198.

\bibitem[ER94]{ER94}
Paul~H. Edelman and Victor Reiner, \emph{Free hyperplane arrangements between
  {$A_{n−-1}$} and {$B_n$}}, Mathematische Zeitschrift \textbf{215} (1994),
  no.~1, 347--365.

\bibitem[Gas98]{Ga98}
Vesselin Gasharov, \emph{Factoring the {P}oincar\'e polynomials for the
  {B}ruhat order on {$S_n$}}, J. Combin. Theory Ser. A \textbf{83} (1998),
  no.~1, 159--164.

\bibitem[GR02]{GR02}
V.~Gasharov and V.~Reiner, \emph{Cohomology of smooth {S}chubert varieties in
  partial flag manifolds}, J. London Math. Soc. (2) \textbf{66} (2002), no.~3,
  550--562.

\bibitem[HLSS09]{HLSS09}
Axel Hultman, Svante Linusson, John Shareshian, and Jonas Sj{\"o}strand,
  \emph{From {B}ruhat intervals to intersection lattices and a conjecture of
  {P}ostnikov}, J. Combin. Theory Ser. A \textbf{116} (2009), no.~3, 564--580.

\bibitem[HR]{HR13}
Torsten Hoge and Gerhard R{\"o}hrle, \emph{Supersolvable reflection
  arrangements}, Proceedings of the American Mathematical Society, to appear in
  print.

\bibitem[Hul11]{Hu11}
Axel Hultman, \emph{Inversion arrangements and {B}ruhat intervals}, J. Combin.
  Theory Ser. A \textbf{118} (2011), no.~7, 1897--1906.

\bibitem[LM14]{LM14}
Joel~B. Lewis and Alejandro~H. Morales, \emph{Combinatorics of diagrams of
  permutations}, preprint (2014), arXiv:1405.1608.

\bibitem[LS90]{LS90}
V.~Lakshmibai and B.~Sandhya, \emph{Criterion for smoothness of {S}chubert
  varieties in {${\rm Sl}(n)/B$}}, Proc. Indian Acad. Sci. Math. Sci.
  \textbf{100} (1990), no.~1, 45--52.

\bibitem[OPY08]{OPY08}
Suho Oh, Alexander Postnikov, and Hwanchul Yoo, \emph{Bruhat order, smooth
  {S}chubert varieties, and hyperplane arrangements}, J. Combin. Theory Ser. A
  \textbf{115} (2008), no.~7, 1156--1166.

\bibitem[OT92]{OT92}
Peter Orlik and Hiroaki Terao, \emph{Arrangements of hyperplanes}, Springer,
  July 1992.

\bibitem[OY10]{OY10}
Suho Oh and Hwanchul Yoo, \emph{Bruhat order, rationally smooth {S}chubert
  varieties, and hyperplane arrangements}, DMTCS Proceedings \textbf{FPSAC}
  (2010).

\bibitem[Pap94]{Pa94}
Paolo Papi, \emph{A characterization of a special ordering in a root system},
  Proceedings of the American Mathematical Society \textbf{120} (1994), no.~3,
  661--665.

\bibitem[RR13]{RR13}
Victor Reiner and Yuval Roichman, \emph{Diameter of graphs of reduced words and
  galleries}, Transactions of the American Mathematical Society \textbf{365}
  (2013), no.~5, 2779--2802.

\bibitem[RS14a]{RS13b}
Edward Richmond and William Slofstra, \emph{{B}illey-{P}ostnikov decompositions
  and the fibre bundle structure of {S}chubert varieties}, preprint (2014),
  arXiv:1408.0084.

\bibitem[RS14b]{RS13a}
\bysame, \emph{Rationally smooth elements of {C}oxeter groups and triangle
  group avoidance}, Journal of Algebraic Combinatorics \textbf{39} (2014),
  no.~3, 659--681.

\bibitem[Sai75]{Sa75}
K.~Saito, \emph{On the {U}niformization of {C}omplements of {D}iscriminant
  {L}oci}, Conference {N}otes ({A.M.S.} {S}ummer {I}nstitute), Amer. Math.
  Soc., Williamstown, 1975.

\bibitem[Sj{\"o}07]{Sj07}
Jonas Sj{\"o}strand, \emph{Bruhat intervals as rooks on skew {F}errers boards},
  Journal of Combinatorial Theory, Series A \textbf{114} (2007), no.~7,
  1182--1198.

\bibitem[Slo14]{Sl13b}
William Slofstra, \emph{Pattern avoidance criterion for free inversion
  arrangements}, preprint (2014), arXiv:1409.7299.

\bibitem[ST54]{ST54}
G.~C. Shephard and J.~A. Todd, \emph{Finite unitary reflection groups},
  Canadian Journal of Mathematics \textbf{6} (1954), no.~0, 274--304.

\bibitem[Ste64]{Stein64}
Robert Steinberg, \emph{Differential equations invariant under finite
  reflection groups}, Transactions of the American Mathematical Society
  \textbf{112} (1964), no.~3, 392--400.

\bibitem[Ter81]{Te81}
Hiroaki Terao, \emph{Generalized exponents of a free arrangement of hyperplanes
  and {S}hepherd-{T}odd-{B}rieskorn formula}, Inventiones mathematicae
  \textbf{63} (1981), no.~1, 159--179.

\bibitem[Yoo11]{Yoo11}
Hwanchul Yoo, \emph{Combinatorics in {S}chubert varieties and {S}pecht
  modules}, Thesis, Massachusetts Institute of Technology, 2011.

\end{thebibliography}

\end{document}